\newcommand{\mylabel}[2]{#2\def\@currentlabel{#2}\label{#1}}
\newtheorem{remark}{Remark}
\numberwithin{equation}{section}
\newcommand{\subscript}[2]{$#1 . #2$}
\newcommand{\A}{\mathcal{A}}
\renewcommand{\phi}{\varphi}
\newcommand{\K}{\mathcal{K}}
\newcommand{\B}{\mathcal{B}}
\newcommand{\D}{\mathcal{D}}
\newcommand{\0}{\scriptscriptstyle 0}
\DeclarePairedDelimiter{\norma}{\lVert}{\rVert}
\DeclarePairedDelimiter{\abs*}{\lvert}{\rvert}
\theoremstyle{definition}
\newtheorem{Definition}{Definition}[section]
\newtheorem{Theorem}[Definition]{Theorem}
\newtheorem{Lemma}[Definition]{Lemma}
\newcommand{\executeiffilenewer}[3]{%
	\ifnum\pdfstrcmp{\pdffilemoddate{#1}}%
	{\pdffilemoddate{#2}}>0%
	{\immediate\write18{#3}}\fi%
}
\newcommand{%
	\executeiffilenewer{.svg}{.pdf}%
	{inkscape -z -D --file=.svg --export-pdf=.pdf --export-latex}%
	\input{.pdf_tex}%
}[1]{%
	\executeiffilenewer{#1.svg}{#1.pdf}%
	{inkscape -z -D --file=#1.svg --export-pdf=#1.pdf --export-latex}%
	\input{#1.pdf_tex}%
}
\theparentequation\alph{equation}}
\begin{document}


\title{On the coupling of the Curved Virtual Element Method with the one-equation Boundary Element Method for 2D exterior Helmholtz problems}
\author[UNIPR]{L. Desiderio}
\ead{luca.desiderio@unipr.it}
\author[POLITO]{S. Falletta\corref{cor1}}
\ead{silvia.falletta@polito.it}
\author[POLITO]{M. Ferrari}
\ead{matteo.ferrari@polito.it}
\author[POLITO]{L. Scuderi}
\ead{letizia.scuderi@polito.it}
\address[UNIPR]{Dipartimento di Scienze Matematiche, Fisiche e Informatiche, Universit\`{a} di Parma,\\ Parco Area delle Scienze, 53/A, 43124, Parma, Italia}
\address[POLITO]{Dipartimento di Scienze Matematiche ``G.L. Lagrange'', Politecnico di Torino,\\ Corso Duca degli Abruzzi, 24, 10129, Torino, Italia}
\cortext[cor1]{Corresponding author.}
\begin{abstract} 
We consider the Helmholtz equation defined in unbounded domains, external to 2D bounded ones, endowed with a Dirichlet condition on the boundary and the Sommerfeld radiation condition at infinity. To solve it, we reduce the infinite region, in which the solution is defined, to a bounded computational one, delimited by a curved smooth artificial boundary and we impose on this latter a non reflecting condition of boundary integral type. Then, we apply the curved virtual element method in the finite computational domain, combined with the one-equation boundary element method on the artificial boundary.
We present the theoretical analysis of the proposed approach and we provide an optimal convergence error estimate in the energy norm. The numerical tests confirm the theoretical results and show the effectiveness of the new proposed approach.
\end{abstract}
\begin{keyword}
Exterior Helmholtz problems, Curved Virtual Element Method, Boundary Element Method, Non Reflecting Boundary Condition.
\end{keyword}
\maketitle
\section{Introduction}\label{sec_1_introduction}
Frequency-domain wave propagation problems defined in unbounded regions, external to bounded obstacles, turn out to be a difficult physical and numerical task due to the issue of  determining the solution in an infinite domain. One of the typical techniques to solve such problems is the Boundary Integral Equation (BIE) method, which allows to reduce by one the dimension of the problem, requiring only the discretization of the obstacle boundary. Once the boundary distribution is retrieved by means of a Boundary Element Method (BEM) \cite{SauterSchwab2011}, the solution of the original problem at each point of the exterior domain is obtained by computing a boundary integral. However, this procedure may result not efficient, especially when the solution has to be evaluated at many points of the infinite domain.

During the last decades much effort has been concentrated on developing alternative approaches. Among these we mention those based on the coupling of domain methods, such as Finite Difference Method (FDM), Finite Element Method (FEM) and the recent Virtual Element Method (VEM), with the BEM. These are obtained by reducing the unbounded domain to a bounded computational one, delimited by an artificial boundary, on which a suitable Boundary Integral-Non Reflecting Boundary Condition (BI-NRBC) is imposed. This latter guarantees that the artificial boundary is transparent and that no spurious reflections arise from the resolution of the original problem by means of the interior domain method applied in the finite computational domain.\\
\indent
The most popular approaches for such a coupling, associated to the use of the FEM in the interior domain, involve the Green representation of the solution and are often referred to as the Johnson \& N\'ed\'elec Coupling (JNC) \cite{JohnsonNedelec1980} or the Costabel \& Han Coupling (CHC) \cite{Costabel1987, Han1990}.
Since the JNC is based on a single BIE, involving both the single and the double layer integral operators associated with the fundamental solution, it is known as the \emph{one equation BEM-FEM coupling} and it gives rise to a non-symmetric final linear system. 

On the contrary, the CHC is based on a couple of BIEs, one of which involves the second order normal derivative of the fundamental solution (hence a hypersingular integral operator),
and it yields to a symmetric scheme. Despite the fact that an integration by parts strategy can be applied to weaken the hypersingularity, the approach turns out to be quite onerous from the computational point of view, especially in the case of frequency-domain wave problems for which the accuracy of the BEM is strictly connected to the frequency parameter and to the density of discretization points per wavelength. Even if the CHC has been applied in several contexts, among which we mention the recent paper \cite{GaticaMeddahi2020}, where the theoretical analysis has been derived for the solution of the Helmholtz problem by means of a VEM, from the engineering point of view the JNC is the most natural and appealing way to deal with unbounded domain problems (for very recent real-life applications see, for example, \cite{AIMI2021741} and \cite{DesiderioFallettaScuderi2021}).\\
\indent
In this paper we propose a new approach based on the JNC between the Galerkin BEM and the Curved Virtual Element Method (CVEM) in the interior of the computational domain.
This choice is based on the fact that the VEM allows to
broaden the classical family of the FEM for the discretization of partial differential
equations for what concerns both the decomposition of domain with complex geometry and the definition of local high order discrete spaces. In the standard VEM formulation the discrete spaces, built on meshes made of polygonal or polyhedral elements, are similar to the usual finite element spaces with the addition of suitable non-polynomial functions. The novelty of the VEM consists in defining discrete spaces and degrees of freedom in such a way that the elementary stiffness and mass matrices can be computed using only the degrees of freedom, without the need of explicitly knowing the non-polynomial functions (from which the ``virtual'' word descends), with a consequent easiness of implementation even for high approximation orders. Originally developed as a variational reformulation of the nodal Mimetic Finite Difference (MFD) method \cite{beirao_2011, brezzi_2009, manzini_2017}, the VEM has been applied to a wide variety of interior problems (among the most recent papers we refer the reader to \cite{antonietti_2016, benvenuti_2019, berrone_2018, certik_2020}). On the contrary, only few papers deal with VEM applied to exterior problems, among which the already mentioned \cite{GaticaMeddahi2020} and \cite{DesiderioFallettaScuderi2021}. In this latter the JNC between the collocation BEM and
the VEM has been numerically investigated for the approximation of the solution of Dirichlet boundary
value problems defined by the 2D Helmholtz equation.

\indent
The very satisfactory results we have obtained in \cite{DesiderioFallettaScuderi2021} have stimulated us to further investigate on the application of the VEM to the solution of exterior problems. For this reason, we propose here a novel approach in the CVEM-Galerkin context that we have studied both from the theoretical and the numerical point of view. In particular, the choice of the CVEM instead of the standard (polygonal) VEM relies on the fact that the use of curvilinear elements allows to avoid the sub-optimal rate of convergence for orders of accuracy higher than 2, when curvilinear obstacles are considered. Further, due to the arbitrariness of the choice of the artificial boundary, and dealing with curved virtual elements, we choose the latter of curvilinear type. It is worth mentioning that the choice of the CVEM space refers in particular to that proposed in \cite{BeiraoRussoVacca2019}. 
 For the discretization of the BI-NRBC, we consider a classical BEM associated to Lagrangian nodal basis functions. As already remarked, the main challenge in the theoretical analysis is the lack of ellipticity of the associated bilinear form. However, using the Fredholm theory for integral operators, it is possible to prove the well-posedness of the problem in case of computational domains with smooth artificial boundaries. Moreover, the analysis of the Helmholtz problem and of the proposed numerical method for its solution, is carried out by interpreting the new main operators as perturbations of the Laplace ones. We present the theoretical analysis of the method in a quite general framework and we provide an optimal error estimate in the energy norm. Since the analysis is based on the pioneering paper by Jonhson and Nédélec, the smoothness properties of the artificial boundary represent a key requirement. We remark that, for the classical Galerkin approach, the breakthrough in the theoretical analysis that validates the stability of the JNC also in case of non-smooth boundaries, was proved by Sayas in \cite{Sayas2009}. However, since we deal with a generalized Galerkin method, the same analysis can not be straightforwardly applied and needs further investigations.

\

\indent
The paper is organized as follows: in the next section we present the model problem for the Helmholtz equation and its reformulation
in a bounded region, by the introduction of the artificial boundary and the associated one equation BI-NRBC. In Section \ref{sec_3_weak_pb} we introduce the variational formulation of the problem restricted to the finite computational domain, recalling the corresponding main theoretical issues, among which existence and uniqueness of the solution.  In Section \ref{sec_4_galerkin_pb} we apply the Galerkin method providing an error estimate in the energy norm, for a quite generic class of approximation spaces. 
Then, in Section \ref{sec_5_discrete_scheme} we describe the choice of the CVEM-BEM approximation spaces and we prove the validity of the error analysis in this specific context. Additionally, we detail the algebraic formulation of the coupled global scheme. Finally, in Section \ref{sec_6_num_results} we present some numerical results highlighting the effectiveness of the proposed approach and the validation of the theoretical results. Furthermore, in the last example we show that the optimal convergence order of the scheme is guaranteed also when polygonal computational domains are considered. Finally, some conclusions are drawn in Section \ref{sec_7_conclusions}.

\section{The model problem}\label{sec_2_model_pb}
In a fixed Cartesian coordinates system $\mathbf{x}=\left(x_{1},x_{2}\right)$, we consider an open bounded domain $\Omega_{\tiny{0}}\subset\mathbf{R}^{2}$ with a Lipschitz boundary $\Gamma_{\tiny{0}}$ having positive Lebesgue measure.
We denote by $\Omega_{e}:=\mathbf{R}^{2}\setminus\overline{\Omega}_{\tiny{0}}$ the exterior unbounded domain (see Figure \ref{fig:rob_domain} (a))
and we consider the following frequency-domain wave propagation problem:
\begin{subequations}\label{dirichlet_problem}
  \begin{empheq}[left=\empheqlbrace]{align}
    \label{dirichlet_problem_1} &\Delta u_{e}(\mathbf{x})+\kappa^{2}u_{e}(\mathbf{x})= -f(\mathbf{x})&  &\mathbf{x}\in \Omega_{e},\\
    \label{dirichlet_problem_2} &u_{e}(\mathbf{x})=g(\mathbf{x})& 						 &\mathbf{x}\in \Gamma_{0},\\
    \label{dirichlet_problem_3} &\lim\limits_{\|\mathbf{x}\|\rightarrow\infty}\|\mathbf{x}\|^{\frac{1}{2}}\left(\nabla u_e(\mathbf{x})\cdot\frac{\mathbf{x}}{\|\mathbf{x}\|}-\imath\kappa u_e(\mathbf{x})\right)=0.
  \end{empheq}
\end{subequations}
In the above problem, Equation (\ref{dirichlet_problem_1}) is known as the Helmholtz equation, with source term $f\in L^{2}(\Omega_{e})$, Equation (\ref{dirichlet_problem_2}) represents a boundary condition of Dirichlet type with datum $g$, and Equation (\ref{dirichlet_problem_3}) is the Sommerfeld radiation condition, that ensures the appropriate behaviour of the complex-valued unknown function $u_{e}$ at infinity. Furthermore, $\nabla$ and $\Delta$ denote the nabla and Laplace operators, respectively, and $\imath$ stands for the imaginary unit.\\
We recall that the wave number $\kappa$ is often real and constant, and it is complex if the propagation medium is energy absorbing, or a function of the space if the medium is inhomogeneous. Here, we suppose that $\kappa$ is real, positive and constant.\\
In the sequel we assume that $g\in H^{\nicefrac{1}{2}}(\Gamma_0)$, to guarantee existence and uniqueness of the solution $u_e$ of Problem (\ref{dirichlet_problem}) in the Sobolev space  $H^{1}_{\text{loc}}(\Omega_{e})$ (see \cite{ColtonKress2019}). 

\

\noindent
As many practical situations require, we aim at determining the solution $u_{e}$ of Problem (\ref{dirichlet_problem}) in a bounded subregion of $\Omega_{e}$ surrounding $\Omega_{0}$. To this end, we introduce an artificial boundary $\Gamma$ which allows decomposing $\Omega_{e}$ into a finite computational domain $\Omega$, bounded internally by $\Gamma_{0}$ and externally by $\Gamma$, and an infinite residual one, denoted by $\Omega_{\infty}$, as depicted in Figure \ref{fig:rob_domain} (b). We choose $\Gamma$ such that $\text{supp}(f)$ is a bounded subset of $\Omega$.
We assume that $\Gamma_{0}$ and $\Gamma$ are made up of a finite number of curves of class $C^{m+1}$, with $m\geq0$, so that $\Omega$ is a domain with piece-wise smooth boundaries. Moreover, we assume that $\Gamma$ is a Lyapunov regular contour, i.e. the gradient of any local parametrization is Hölder continuous.
\begin{figure}[H]
	\centering
	\includegraphics[width=0.85\textwidth]{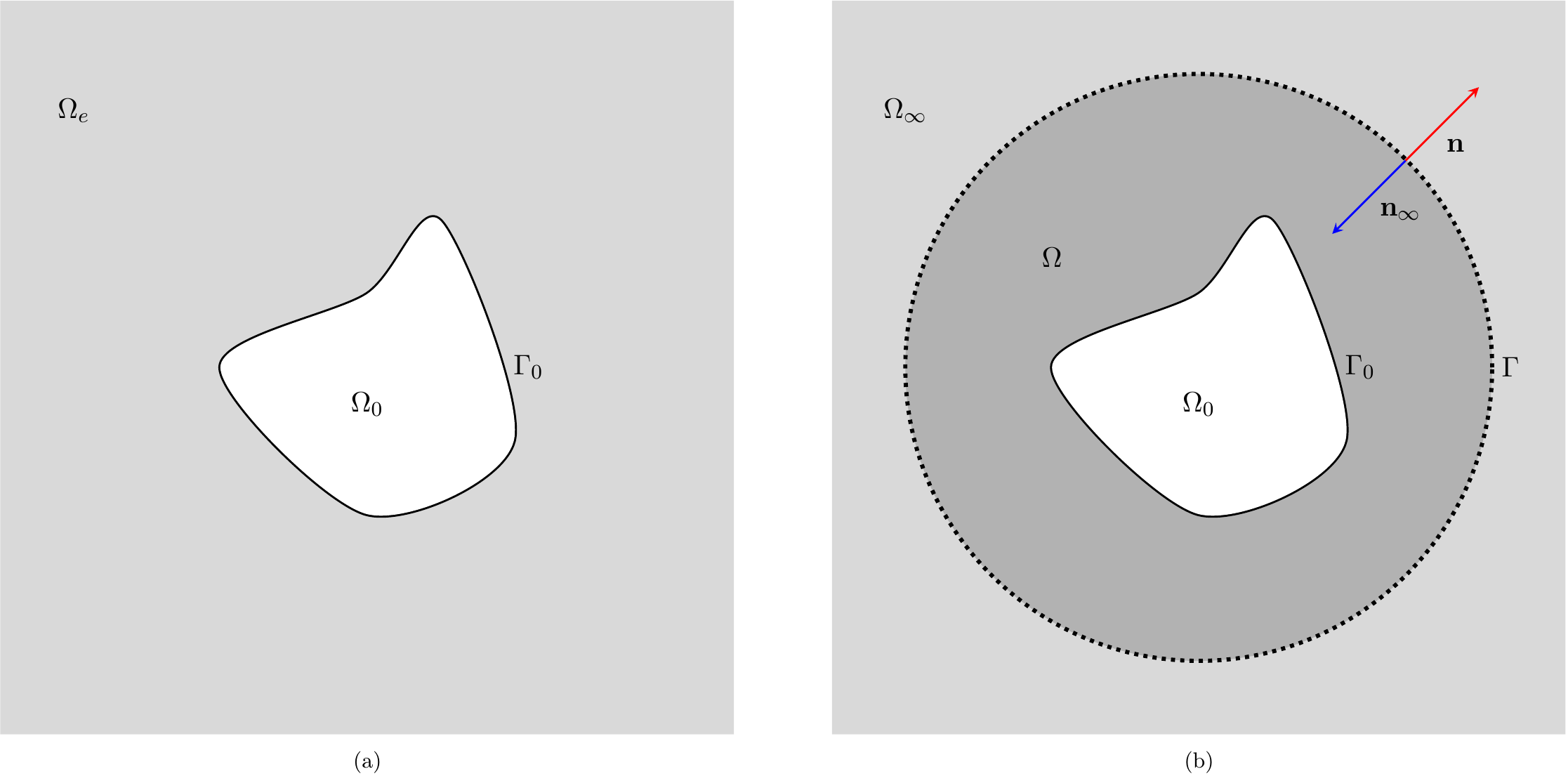}
	\caption{Model problem setting.}
	\label{fig:rob_domain}
\end{figure}
\noindent 
Denoting by $u$ and $u_{\infty}$ the restrictions of the solution $u_{e}$ to $\Omega$ and $\Omega_{\infty}$ respectively, and by $\mathbf{n}$ and $\mathbf{n}_{\infty}$ the unit normal vectors on $\Gamma$ pointing outside $\Omega$ and $\Omega_{\infty}$, we impose the following compatibility and equilibrium conditions on $\Gamma$ (recall that $\mathbf{n}_{\infty}=-\mathbf{n}$):
\begin{equation}
    \label{compatibility_condition}
    u(\mathbf{x}) =  u_{\infty}(\mathbf{x}), \qquad        \frac{\partial u}{\partial{\mathbf{n}}}(\mathbf{x})=-\frac{\partial u_{\infty}}{\partial{\mathbf{n}_{\infty}}}(\mathbf{x}),	\qquad \mathbf{x}\in \Gamma.
  \end{equation}
In the above relations and in the sequel we omit, for simplicity, the use of the trace operator to indicate the restriction of $H^1$ functions to the boundary $\Gamma$ from the exterior or interior.
\noindent
In order to obtain a well posed problem in $\Omega$, we need to impose a proper boundary condition on $\Gamma$. It is known that the solution $u_{\infty}$ in $\Omega_{\infty}$ can be represented by the following Kirchhoff formula:
\begin{equation}\label{boundary_integral_representation}
u_{\infty}(\mathbf{x})=\int_{\Gamma}G_{\kappa}(\mathbf{x},\mathbf{y})\frac{\partial u_{\infty}}{\partial\mathbf{n}_{\infty}}(\mathbf{y})\,\dd\Gamma_{\mathbf{y}}-\int_{\Gamma}\frac{\partial G_{\kappa}}{\partial\mathbf{n}_{\infty,\mathbf{y}}}(\mathbf{x},\mathbf{y})u_{\infty}(\mathbf{y})\,\dd\Gamma_{\mathbf{y}} \qquad \mathbf{x}\in\Omega_{\infty}\setminus\Gamma,
\end{equation}
in which $G_{\kappa}$ is the fundamental solution of the 2D Helmholtz problem and $\mathbf{n}_{\infty,\mathbf{y}}$ denotes the normal unit vector with initial point in $\mathbf{y} \in\Gamma$. The expression of $G_{\kappa}$ and of its normal derivative in \eqref{boundary_integral_representation} are given by
\begin{equation*}\label{kernels}
G_{\kappa}(\mathbf{x},\mathbf{y}):=\frac{\imath}{4}H_{0}^{(1)}(\kappa r) \quad \text{and} \quad \frac{\partial G_{\kappa}}{\partial\mathbf{n}_{\infty,\mathbf{y}}}(\mathbf{x},\mathbf{y})=\frac{\imath\kappa}{4}\frac{\mathbf{r}\cdot\mathbf{n}_{\infty,\mathbf{y}}}{r}H_{1}^{(1)}(\kappa r),
\end{equation*}
where $r=|\mathbf{r}|=|\mathbf{x}-\mathbf{y}|$ represents the distance between the source point $\mathbf{x}$ and the field point $\mathbf{y}$, and $H_{m}^{(1)}$ denotes the $m$-th order Hankel function of the first kind. 

\noindent
We introduce the single-layer integral operator $\text{V}_{\kappa} \colon H^{-\nicefrac{1}{2}}(\Gamma)\to H^{\nicefrac{1}{2}}(\Gamma)$
\begin{equation*}\label{single_layer_operator}
\text{V}_{\kappa}\psi(\mathbf{x}):=\int_{\Gamma}G_{\kappa}(\mathbf{x},\mathbf{y})\psi(\mathbf{y})\,\dd\Gamma_{\mathbf{y}}, \qquad \mathbf{x}\in\Gamma
\end{equation*}
and the double-layer integral operator $\text{K}_{\kappa}\colon H^{\nicefrac{1}{2}}(\Gamma)\to H^{\nicefrac{1}{2}}(\Gamma)$
\begin{equation*}\label{single_layer_operator}
\text{K}_{\kappa}\varphi(\mathbf{x}):=-\int_{\Gamma}\frac{\partial G_{\kappa}}{\partial\mathbf{n}_{\infty,\mathbf{y}}}(\mathbf{x},\mathbf{y})\varphi(\mathbf{y})\,\dd\Gamma_{\mathbf{y}}, \qquad \mathbf{x}\in\Gamma,
\end{equation*}
which are continuous for all $\kappa > 0$ (see \cite{HsiaoWendland2008}).
Then, the trace of \eqref{boundary_integral_representation} on $\Gamma$ reads (see \cite{ColtonKress2019}) 
\begin{equation}\label{boundary_integral_equation_operators}
\frac{1}{2}u_{\infty}(\mathbf{x})-\text{V}_{\kappa}\frac{\partial u_{\infty}}{\partial\mathbf{n}_{\infty}}({\mathbf{x}})-\text{K}_{\kappa}u_{\infty}(\mathbf{x})=0, \qquad \mathbf{x}\in\Gamma.
\end{equation}
\noindent
Equation \eqref{boundary_integral_equation_operators}, which expresses the natural relation that $u_{\infty}$ and its normal derivative have to satisfy at each point of the artificial boundary, is imposed on $\Gamma$ as an exact (non local) BI-NRBC to solve Problem \eqref{dirichlet_problem} in the finite computational domain.
Thus, taking into account the compatibility and equilibrium  conditions (\ref{compatibility_condition}),  and introducing the notation $\displaystyle \lambda:=\frac{\partial u}{\partial\mathbf{n}}$, the new problem defined in the domain of interest $\Omega$ takes the form:
\begin{subequations}\label{model_problem_full}
  \begin{empheq}[left=\empheqlbrace]{align}
    \label{model_problem_full_1} &\Delta u(\mathbf{x})+\kappa^{2}u(\mathbf{x})=-f(\mathbf{x})&  &\mathbf{x}\in \Omega\\
    \label{model_problem_full_2} &u(\mathbf{x})=g(\mathbf{x})& 						 &\mathbf{x}\in \Gamma_0\\
    \label{model_problem_full_3} &\frac{1}{2}u(\mathbf{x})+\text{V}_{\kappa}\lambda({\mathbf{x}})-\text{K}_{\kappa}u(\mathbf{x})=0& &\mathbf{x}\in\Gamma.
  \end{empheq}
\end{subequations}
\noindent
We point out that $\lambda$, which is defined on the boundary $\Gamma$ in general by means of a trace operator (see \cite{QuarteroniValli1994}), is an additional unknown function. 
\noindent

For the theoretical analysis we will present in the forthcoming section, we further need to introduce the fundamental solution $G_{0}$ of the Laplace equation and its normal derivative:
\begin{equation*}\label{kernels_0}
G_{0}(\mathbf{x},\mathbf{y}):=-\frac{1}{2\pi}\log r \quad \text{and} \quad \frac{\partial G_{0}}{\partial\mathbf{n}_{\infty,\mathbf{y}}}(\mathbf{x},\mathbf{y})=\frac{1}{2\pi}\frac{\mathbf{r}\cdot\mathbf{n}_{\infty,\mathbf{y}}}{r^{2}}.
\end{equation*}
Denoting by $\text{V}_0$ and $\text{K}_0$ the associated single and double layer operators, the following regularity property of the operators $\text{V}_{\kappa}-\text{V}_{0}$ and $\text{K}_{\kappa}-\text{K}_{0}$ holds.
\begin{Lemma}  \label{Lemma:Vcompa}
	The operators $\text{V}_{\kappa} - \text{V}_{0} : H^{-\nicefrac{1}{2}}(\Gamma)\rightarrow H^{\nicefrac{5}{2}}(\Gamma)$ and $\text{K}_{\kappa} - \text{K}_{0} : H^{\nicefrac{1}{2}}(\Gamma)\rightarrow H^{\nicefrac{3}{2}}(\Gamma)$ are continuous.
\end{Lemma}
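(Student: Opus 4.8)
The plan is to prove both smoothing statements by comparing the kernels of the Helmholtz and Laplace operators, showing that their principal (most singular) parts coincide and hence cancel in the difference; what survives is a kernel that is two orders less singular than the individual ones, and this extra regularity is precisely what produces the claimed gain of derivatives.

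First, for the single layer, I would insert the small-argument expansion $H_{0}^{(1)}(z)=J_{0}(z)+\imath Y_{0}(z)$, with $Y_{0}(z)=\tfrac{2}{\pi}\bigl(\log(z/2)+\gamma\bigr)J_{0}(z)+(\text{even power series in }z)$ and $\gamma$ the Euler--Mascheroni constant, into $G_{\kappa}=\tfrac{\imath}{4}H_{0}^{(1)}(\kappa r)$. Collecting terms gives $G_{\kappa}(\mathbf{x},\mathbf{y})=-\tfrac{1}{2\pi}\log r\,J_{0}(\kappa r)+E(r)$, where $E$ is an even analytic function of $r$, hence a smooth function of $r^{2}=|\mathbf{x}-\mathbf{y}|^{2}$ on $\Gamma\times\Gamma$. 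Since $G_{0}=-\tfrac{1}{2\pi}\log r$ and $J_{0}(\kappa r)-1=O(r^{2})$ is even, the difference reads
\[
G_{\kappa}-G_{0}=-\frac{1}{2\pi}\bigl(J_{0}(\kappa r)-1\bigr)\log r+E(r)=\frac{\kappa^{2}}{8\pi}\,r^{2}\log r+(\text{smooth}),
\]
so that the logarithmic singularity cancels and the leading singular term is $r^{2}\log r$. A kernel of this pseudo-homogeneous type defines, on a sufficiently smooth closed curve, an integral operator of order $-3$; by the standard mapping properties of such operators (see \cite{HsiaoWendland2008}) it maps $H^{-\nicefrac{1}{2}}(\Gamma)$ continuously into $H^{-\nicefrac{1}{2}+3}(\Gamma)=H^{\nicefrac{5}{2}}(\Gamma)$, which is the assertion for $\text{V}_{\kappa}-\text{V}_{0}$.

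For the double layer I would argue analogously, using $H_{1}^{(1)}(\kappa r)$, whose leading behaviour is $\sim-\tfrac{2\imath}{\pi\kappa r}$. The crucial point is that the $\tfrac{1}{r}$ singularity of $\tfrac{\imath\kappa}{4}H_{1}^{(1)}(\kappa r)$ reproduces exactly the factor $\tfrac{1}{2\pi r}$ of the Laplace kernel, so that in
\[
\frac{\partial (G_{\kappa}-G_{0})}{\partial\mathbf{n}_{\infty,\mathbf{y}}}=\frac{\mathbf{r}\cdot\mathbf{n}_{\infty,\mathbf{y}}}{r}\Bigl(\frac{\imath\kappa}{4}H_{1}^{(1)}(\kappa r)-\frac{1}{2\pi r}\Bigr)
\]
the bracket is $O(r\log r)$. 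Since $\Gamma$ is smooth, the geometric factor obeys $\mathbf{r}\cdot\mathbf{n}_{\infty,\mathbf{y}}=O(r^{2})$, so that $\tfrac{\mathbf{r}\cdot\mathbf{n}_{\infty,\mathbf{y}}}{r}=r\,p(\mathbf{x},\mathbf{y})$ with $p$ smooth, and the whole difference kernel is again of the form $(\text{smooth})\cdot r^{2}\log r+(\text{smooth})$. Its leading singularity being $r^{2}\log r$, hence weakly singular of sufficiently high order, the associated operator gains (at least) one derivative and therefore maps $H^{\nicefrac{1}{2}}(\Gamma)$ continuously into $H^{\nicefrac{3}{2}}(\Gamma)$.

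The step I expect to require the most care is converting the pointwise kernel estimate (leading term $r^{2}\log r$) into the precise Sobolev mapping: this needs the calculus of pseudo-homogeneous kernels on the curve $\Gamma$, together with the standing smoothness hypothesis on the artificial boundary, which guarantees both that the target spaces $H^{\nicefrac{5}{2}}$ and $H^{\nicefrac{3}{2}}$ are meaningful and that the ``smooth'' remainders are genuinely regular on $\Gamma\times\Gamma$. An alternative, perhaps cleaner route avoids the kernel bookkeeping: writing $w$ for the difference of the Helmholtz and Laplace layer potentials of the same density, the jump relations show that $w$ has no jump in either its trace or its normal derivative across $\Gamma$, while $\Delta w=-\kappa^{2}\,(\text{Helmholtz potential})$; interior elliptic regularity up to $\Gamma$ then upgrades the regularity of $w$, and the trace theorem yields the claimed smoothing. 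I would use the kernel computation to pin down the exact orders and the elliptic-regularity argument as a structural cross-check.
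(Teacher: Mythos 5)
Your proposal follows essentially the same route as the paper's proof: expand the Helmholtz kernels near the diagonal, observe that the leading singularities (logarithmic for $G_{\kappa}-G_{0}$, $\nicefrac{1}{r}$ for the normal-derivative difference) cancel, classify the resulting difference kernels as pseudo-homogeneous, and invoke the standard mapping properties of such kernels on a smooth curve (Hsiao--Wendland, Sauter--Schwab). Your double-layer computation, which additionally exploits $\mathbf{r}\cdot\mathbf{n}_{\infty,\mathbf{y}}=O(r^{2})$ on a smooth $\Gamma$, is in fact sharper than what the paper records (a degree-0 pseudo-homogeneous expansion), but both suffice for the stated $H^{\nicefrac{1}{2}}(\Gamma)\rightarrow H^{\nicefrac{3}{2}}(\Gamma)$ continuity.
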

\begin{proof}
	We preliminary recall that 
	the Hankel functions $H_{m}^{(1)}$, with $m=0,1$, have the following asymptotic behaviour when $r\rightarrow0$ (see formulae (2.14) and (2.15) in \cite{SaranenVainikko2002}):
	\begin{subequations}\label{asymptotic_behaviour_H}
		\begin{empheq}{align}
		\label{asymptotic_behaviour_H_0_1} 
		&H_0^{(1)}(r) = \frac{\imath2}{\pi} \log{(r)} + 1 + \frac{\imath2}{\pi}\left(\gamma - \log{(2)}\right) + O(r^{2}),\\
		\label{asymptotic_behaviour_H_1_1}
		 &H_1^{(1)}(r) = -\frac{\imath2}{\pi r} + O(1)
		\end{empheq}
	\end{subequations}
	\noindent
	where $\gamma\simeq0.577216$ is the Euler constant. Then it easily follows that, when $r\rightarrow0$
	\begin{subequations}
		\begin{empheq}{align*}
		&G_{\kappa}(\mathbf{x},\mathbf{y})-G_{0}(\mathbf{x},\mathbf{y}) = \frac{\imath}{4}-\frac{1}{2\pi} \left(\gamma - \log{\left(\frac{\kappa}{2}\right)}\right) + O(r^2),\\
		&\frac{\partial G_{\kappa}}{\partial\mathbf{n}_{\infty,\mathbf{y}}}(\mathbf{x},\mathbf{y})-\frac{\partial G_{0}}{\partial\mathbf{n}_{\infty,\mathbf{y}}}(\mathbf{x},\mathbf{y}) = O(1).
		\end{empheq}
	\end{subequations}
	\noindent
	Following \cite{HsiaoWendland2008} (see Section 7.1), we can therefore deduce that  
	$G_{\kappa}-G_{0}$ and $\displaystyle \frac{\partial G_{\kappa}}{\partial\mathbf{n}_{\infty,\mathbf{y}}}-\frac{\partial G_{0}}{\partial\mathbf{n}_{\infty,\mathbf{y}}}$ are kernel functions with pseudo-homogeneous expansions of degree 2 and 0, respectively.
	From these properties, and proceeding as in \cite{SauterSchwab2011} (see Remark 3.13), the thesis is proved. 
\end{proof}

\begin{remark}\label{rk:reg_V0}
	Similarly, since $G_0$ is a kernel function with a pseudo-homogeneous expansion of degree 0, we can deduce that  the operator $\text{V}_0 : H^{s}(\Gamma) \rightarrow H^{s+1}(\Gamma)$ is continuous for all $s \in \mathbf{R}$.
\end{remark}

\section{The weak formulation of the model problem}\label{sec_3_weak_pb}
	We start by noting that, as usual, we can reduce the non homogeneous boundary condition on $\Gamma_0$ in \eqref{dirichlet_problem} to a homogeneous
	one by splitting $u_e$ as the sum of a suitable fixed function in $H^{1}_{g,\Gamma_{0}}(\Omega):= \{ u \in H^{1}(\Omega) : u = g \, \text{ on } \Gamma_0\}$ satisfying the Sommerfeld radiation condition
	and of an unknown function belonging to the space  $H^{1}_{0,\Gamma_{0}}(\Omega)$. Therefore, from now on, we consider Problem  \eqref{model_problem_full} with $g=0$.
	
	\

In order to derive the weak form of Problem (\ref{model_problem_full}), we introduce the bilinear forms $a:H^{1}(\Omega)\times H^{1}(\Omega)\rightarrow\mathbf{C}$ and $m : L^2(\Omega) \times L^2(\Omega) \rightarrow \mathbf{C}$ given by
\begin{equation}\label{bilinear_form}
a(u,v):=\int\limits_{\Omega}\nabla u(\mathbf{x})\cdot\nabla v(\mathbf{x})\,\dd\mathbf{x} \qquad \text{and} \qquad m(u,v):=\int\limits_{\Omega}u(\mathbf{x})v(\mathbf{x})\,\dd\mathbf{x},
\end{equation}
\noindent
and the $L^{2}(\Gamma)$-inner product $(\cdot,\cdot)_{\Gamma}:L^{2}(\Gamma)\times L^{2}(\Gamma)\rightarrow\mathbf{C}$
\begin{equation*}
(\lambda,v)_{\Gamma}=\int\limits_{\Gamma}\lambda(\mathbf{x})v(\mathbf{x})\dd\Gamma_{\mathbf{x}},
\end{equation*}
extended to the duality pairing $\langle\cdot,\cdot\rangle_{\Gamma}$ on $H^{-\nicefrac{1}{2}}(\Gamma)\times H^{\nicefrac{1}{2}}(\Gamma)$.\\

The variational formulation of Problem (\ref{model_problem_full}) consists in finding $u\in H_{0,\Gamma_0}^{1}(\Omega)$ and $\lambda\in H^{-\nicefrac{1}{2}}(\Gamma)$ such that 
\small
\begin{subequations}\label{model_problem_variational}
	\begin{empheq}[left=\empheqlbrace]{align}
	\label{model_problem_variational_1} &a(u,v)-\kappa^{2}m(u,v)-\langle \lambda,v\rangle_{\Gamma}=m(f,v)&  &\forall\, v\in H^{1}_{0,\Gamma_{0}}(\Omega),\\
	\label{model_problem_variational_2} &\langle \mu,\left(\frac{1}{2}I-\text{K}_{\kappa}\right)u\rangle_{\Gamma}+\langle \mu,\text{V}_{\kappa}\lambda\rangle_{\Gamma}=0 & 						 &\forall\, \mu\in H^{-\nicefrac{1}{2}}(\Gamma),
	\end{empheq}
\end{subequations}
\normalsize
\noindent
where $I$ stands for the identity operator. In order to reformulate the above problem as an equation in operator form, following \cite{JohnsonNedelec1980}, we consider the Hilbert space $V:=H^{1}_{0,\Gamma_{0}}(\Omega)\times H^{-\nicefrac{1}{2}}(\Gamma)$, equipped with the norm
\begin{equation*}\label{norm_V}
\left\|\hat{u}\right\|_{V}^{2}:=\left\|u\right\|_{H^{1}(\Omega)}^{2} + \left\|\lambda\right\|_{H^{-\nicefrac{1}{2}}(\Gamma)}^{2}, \quad \text{for} \ \hat u = (u,\lambda),
\end{equation*}
 induced by the scalar product
\begin{equation*}\label{scalar_product_V}
\left(\hat{u}, \hat{v}\right)_{V}:=\left(u,v\right)_{H^{1}(\Omega)} + \left(\lambda,\mu\right)_{H^{-\nicefrac{1}{2}}(\Gamma)}, \quad  \text{for} \ \hat u = (u,\lambda), \, \hat v = (v,\mu).
\end{equation*}
\noindent
We introduce the bilinear form $\mathcal{A}_{\kappa}:V\times V\rightarrow\mathbf{C}$ defined, for $\hat{u}  = (u,\lambda)$ and $\hat{v} = (v,\mu)$, by 
\begin{equation}\label{def_A_kappa}
\mathcal{A}_{\kappa}(\hat{u},\hat{v}):=a(u,v)-\kappa^{2}m(u,v)-\langle \lambda,v\rangle_{\Gamma}+\langle \mu,u\rangle_{\Gamma}+2\langle \mu,\text{V}_{\kappa}\lambda\rangle_{\Gamma}-2\langle \mu,\text{K}_{\kappa}u\rangle_{\Gamma},
\end{equation}
and the linear continuous operator $\mathcal{L}_{f}:V\rightarrow\mathbf{C}$ 
\begin{equation*}\label{def_lf_kappa}
\mathcal{L}_{f}(\hat{v}):=m(f,v), \quad \hat{v} = (v,\mu). 
\end{equation*}
\noindent
Thus, Problem (\ref{model_problem_variational}) can be rewritten as follows: find $\hat{u}\in V$ such that
\begin{equation}\label{model_problem_operator}
\mathcal{A}_{\kappa}(\hat{u},\hat{v})=\mathcal{L}_{f}(\hat{v}) \qquad \forall\,\hat{v}\in V.
\end{equation}
\noindent
The well-posedness of the above problem has been proved in \cite{MarquezMeddahiSelgas2003} (see Theorem 3.2), provided that $\kappa^{2}$ is not an eigenvalue of the Dirichlet-Laplace problem in $\Omega$.\\

\noindent
For what follows, it will be useful to rewrite $\mathcal{A}_{\kappa}$ by means of
the bilinear forms
$\mathcal{B}_{\kappa},\mathcal{K}_{\kappa}:V\times V\rightarrow\mathbf{C}$, defined as:
\begin{subequations}\label{definition_A_B_K}
  \begin{empheq}{align}
  \label{definition_A_kappa}
  &\mathcal{A}_{\kappa}(\hat{u},\hat{v}):=\mathcal{B}_{\kappa}(\hat{u},\hat{v})+\mathcal{K}_{\kappa}(\hat{u},\hat{v})\\
    \label{definition_B_kappa} &\mathcal{B}_{\kappa}(\hat{u},\hat{v}) := a(u,v)-\kappa^{2}m(u,v)-\langle \lambda,v\rangle_{\Gamma}+\langle \mu,u\rangle_{\Gamma}+2\langle \mu,\text{V}_{\kappa}\lambda\rangle_{\Gamma} \\
    \label{definition_K_kappa} &\mathcal{K}_{\kappa}(\hat{u},\hat{v}) := -2\langle \mu,\text{K}_{\kappa}u\rangle_{\Gamma}  \end{empheq}
\end{subequations}
for $\hat u = (u,\lambda), \hat v = (v,\mu) \in V$.
Due to the continuity property of both the operators $\text{V}_{\kappa}$ and $\text{K}_{\kappa}$ when $\kappa$ is real and non-negative (see \cite{GaticaMeddahi2020}), by using the trace theorem and the Cauchy-Schwarz inequality, it is easy to prove that the corresponding linear mappings $\mathcal{A}_{\kappa}, \mathcal{B}_{\kappa},\mathcal{K}_{\kappa}:V\rightarrow V^{'}$, defined by
$$\left(\mathcal{A}_{\kappa}\hat{u}\right)(\hat{v}) := \mathcal{A}_{\kappa}(\hat{u},\hat{v}), \qquad
	\left(\mathcal{B}_{\kappa}\hat{u}\right)(\hat{v}) := \mathcal{B}_{\kappa}(\hat{u},\hat{v}), \qquad
\left(\mathcal{K}_{\kappa}\hat{u}\right)(\hat{v}) := \mathcal{K}_{\kappa}(\hat{u},\hat{v}),$$
%
are continuous from $V$ to its dual $V^{'}$. 
\noindent
Finally, we introduce the adjoint operators $\mathcal{A}^{*}_{\kappa}, \mathcal{B}^{*}_{\kappa}:V\rightarrow V^{'}$ defined by:
\begin{subequations}
  \begin{empheq}{align*}
    &\left(\mathcal{A}^{*}_{\kappa}\hat{v}\right)(\hat{u}) := \left(\mathcal{A}_{\kappa}\hat{u}\right)(\hat{v})= \mathcal{A}_{\kappa}(\hat{u},\hat{v})\\
    &\left(\mathcal{B}^{*}_{\kappa}\hat{v}\right)(\hat{u}) := \left(\mathcal{B}_{\kappa}\hat{u}\right)(\hat{v})= \mathcal{B}_{\kappa}(\hat{u},\hat{v}).
   \end{empheq}
\end{subequations}

\noindent
In the following remarks we recall classical results about the afore introduced maps.
\begin{remark}\label{rk:Astar_cont}
	Theorem 3.2 in 
 \cite{MarquezMeddahiSelgas2003} and the closed graph theorem ensure that, if $\kappa^{2}$ is not an eigenvalue of the Dirichlet-Laplace problem in $\Omega$,
the inverse linear mappings $\mathcal{A}_{\kappa}^{-1},\mathcal{A}_{\kappa}^{* -1}:V^{'}\rightarrow V$ are continuous.
\end{remark}

\begin{remark}\label{rk:isomorfismi_k0}
Denoting by $H_{0}^{-\nicefrac{1}{2}}(\Gamma):=\left\{\lambda\in H^{-\nicefrac{1}{2}}(\Gamma) \ : \ \langle \lambda,1\rangle_{\Gamma}=0\right\}$, we set $\widetilde{V}:=H^{1}_{0,\Gamma_{0}}(\Omega)\times H^{-\nicefrac{1}{2}}_0(\Gamma)$. It has been proved in \cite{JohnsonNedelec1980} (see Lemmas 1, 2 and 3) that the mappings $\mathcal{A}_{0},\mathcal{A}^{*}_{0},\mathcal{B}_{0},\mathcal{B}^{*}_{0}:\widetilde{V}\rightarrow\widetilde{V}^{'}$ are isomorphisms. Moreover, for $s\geq0$, the mappings $\mathcal{A}_{0}^{-1},\mathcal{A}_{0}^{*  -1}, \mathcal{B}_{0}^{-1},\mathcal{B}_{0}^{*  -1} : H^{s-1}(\Omega) \times H^{s-\nicefrac{1}{2}}(\Gamma) \times H^{s+\nicefrac{1}{2}}(\Gamma) \to H^{s+1}(\Omega) \times H^{s-\nicefrac{1}{2}}(\Gamma)$ are continuous. Finally, we recall that $\mathcal{B}_{0}$ is coercive in the $\widetilde{V}$-norm.
\end{remark}

\section{The Galerkin method}\label{sec_4_galerkin_pb}
In what follows, the notation $Q_1 \apprle Q_2$ (resp. $Q_1 \apprge Q_2$) means that the quantity $Q_1$ is bounded from above (resp. from below) by $c\,Q_2$, where $c$ is a positive constant that, unless explicitly stated, does not depend on any relevant parameter involved in the definition of $Q_1$ and $Q_2$.

\

In order to describe the Galerkin approach applied to \eqref{model_problem_operator}, we introduce a sequence of unstructured meshes $\{\mathcal{T}_{h}\}_{h>0}$, that represent coverages of the domain $\Omega$ with a finite number of elements $E$, having diameter $h_E$. The mesh width $h>0$, related to the spacing of the grid, is defined as 
$h:=\underset{E\in\mathcal{T}_{h}}{\max}h_E$. Moreover, we denote by $\mathcal{T}_{h}^{\Gamma}$ the decomposition of the artificial boundary $\Gamma$, inherited from $\mathcal{T}_{h}$, which consists of curvilinear parts joined with continuity.
\noindent
We suppose that for each $h$ and for each element $E\in\mathcal{T}_{h}$ there exists a constant $\varrho>0$ such that the following assumptions are fulfilled:
\begin{enumerate}[label=(\subscript{A}{{\arabic*}})]
    {\setlength\itemindent{50pt} \item\label{A1} $E$ is star-shaped with respect to a ball of radius greater than $\varrho h_{E}$};
    {\setlength\itemindent{50pt} \item\label{A2} the length of any (eventually curved) edge of $E$ is greater than $\varrho h_{E}$}.
\end{enumerate}
We introduce the splitting of the bilinear forms $a$ and $m$ defined in \eqref{bilinear_form} into a sum of local bilinear forms $a^{\text{\tiny{E}}},m^{\text{\tiny{E}}}:H^{1}(E)\times H^{1}(E)\rightarrow\mathbf{C}$, associated to the elements $E$ of the decomposition of $\Omega$:
\begin{align*}
a(u,v)&=\sum\limits_{E\in\mathcal{T}_{h}}a^{\text{\tiny{E}}}(u,v):=\sum\limits_{E\in\mathcal{T}_{h}}\int\limits_{E}\nabla u(\mathbf{x})\cdot\nabla v(\mathbf{x})\,\dd\mathbf{x},\\
m(u,v)&=\sum\limits_{E\in\mathcal{T}_{h}}m^{\text{\tiny{E}}}(u,v):=\sum\limits_{E\in\mathcal{T}_{h}}\int\limits_{E}u(\mathbf{x})v(\mathbf{x})\,\dd\mathbf{x}.
\end{align*}
\noindent
Then, for any $k\in\mathbf{N}$, denoting by $P_{k}(E)$ the space of polynomials of degree $k$ defined on $E$, we introduce the local polynomial $H^{1}$-projection $\Pi_{k}^{\nabla,E}:H^{1}(E)\rightarrow P_{k}(E)$, defined such that for every $v\in H^{1}(E)$:
\begin{equation}\label{proj_Pi_nabla}
\begin{cases}
\displaystyle\int_{E}\nabla\Pi_{k}^{\nabla,E}v \cdot \nabla q\,\dd E = \displaystyle\int_{E}\nabla v \cdot\nabla q\,\dd E \qquad  \forall\, q\in P_{k}(E),\\[10pt]
\displaystyle\int_{\partial E}\Pi_k^{\nabla,E} v\,\dd s = \displaystyle\int_{\partial E}v\,\dd s 
\end{cases}
\end{equation}
and the local polynomial $L^{2}$-projection operator $\Pi_{k}^{0,E}:L^{2}(E)\rightarrow P_{k}(E)$, defined for all $v\in L^{2}(E)$ such that
\begin{equation}\label{proj_Pi_0}
\int_{E}\Pi_{k}^{0,E}v \, q\,\dd E= \int_{E} v \, q\,\dd E \qquad \forall\, q\in P_{k}(E).
\end{equation}
From the definition of $\Pi_{k}^{\nabla,E}$ and of $\Pi_{k}^{0,E}$, it follows:
\begin{align*}
&a^{\text{\tiny{E}}}\left(\Pi_{k}^{\nabla,E}v,q\right)=a^{\text{\tiny{E}}}\left(v,q\right) & \forall\,  q\in P_{k}(E),\\
&m^{\text{\tiny{E}}}\left(\Pi_{k}^{0,E}v,q\right)=m^{\text{\tiny{E}}}\left(v,q\right) & \forall\,  q\in P_{k}(E).
\end{align*}
\noindent
Moreover, since $\Omega$ is the union of star-shaped domains $E\in\mathcal{T}_{h}$, the local polynomial projectors $\Pi_{k}^{\nabla,E}$ and $\Pi_{k}^{0,E}$ can be extended to the global projectors $\Pi_{k}^{\nabla}:H^{1}(\Omega)\rightarrow P_{k}(\mathcal{T}_{h})$ and $\Pi_{k}^{0}:L^{2}(\Omega)\rightarrow P_{k}(\mathcal{T}_{h})$ as follows:\\
\begin{subequations}
  \begin{empheq}{align*}
    &\left(\Pi_{k}^{\nabla} v\right)_{|_E}:=\Pi_{k}^{\nabla,E}v_{|_E}&  &\forall\, v\in H^{1}(\Omega)\\
    &\left(\Pi_{k}^{0} v\right)_{|_E}:=\Pi_{k}^{0,E}v_{|_{E}}&  &\forall\, v\in L^{2}(\Omega),
  \end{empheq}
\end{subequations}
$P_k(\mathcal{T}_{h})$ being the space of piecewise polynomials with respect to the decomposition $\mathcal{T}_{h}$ of $\Omega$. 

\

\noindent

In the following lemma we prove a polynomial approximation property of the above defined projectors. To this aim, since we shall deal with functions belonging to the space $H^1(\mathcal{T}_h) := \underset{E\in\mathcal{T}_h}{\prod} H^1(E)$, we need to introduce the following broken $H^1$ norm %
$$\| v \|_{H^1(\mathcal{T}_h)} := \left(\sum_{E\in \mathcal{T}_h} \| v \|^2_{H^1(E)}\right)^{1/2}.$$
\begin{Lemma}
Assuming \ref{A1}, for all $v\in H^{s+1}(\Omega)$ with $0\leq s\leq k$, it holds:
\begin{equation}\label{interp_property_Pi_0}  
\left\|v-\Pi_{k}^{0} v\right\|_{L^{2}(\Omega)}\apprle h^{s+1}\left\|v\right\|_{H^{s+1}(\Omega)}.
\end{equation}
Moreover, for all $v\in H^{s+1}(\Omega)$ with $1\leq s\leq k$, it holds:
\begin{equation}\label{interp_property_Pi_nabla} 
\left\|v-\Pi_{k}^{\nabla} v\right\|_{H^{1}(\mathcal{T}_h)}\apprle h^s\left\|v\right\|_{H^{s+1}(\Omega)}.
\end{equation}
\end{Lemma}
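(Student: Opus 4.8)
The plan is to prove both inequalities locally, on a single element $E$, and then to sum the squared local errors over the mesh. The engine in each case is a scaled Bramble--Hilbert (Deny--Lions) estimate: on a domain that is star-shaped with respect to a ball, the best polynomial approximation of degree $k$ controls the approximation error in terms of a higher-order Sobolev seminorm. The role of assumption \ref{A1} is precisely to guarantee that the constant in this estimate is uniform across all elements, depending only on $k$, $s$ and the chunkiness parameter $\varrho$, and not on the individual geometry of $E$.

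First I would treat the $L^2$-projection. By definition \eqref{proj_Pi_0}, $\Pi_{k}^{0,E}v$ is the $L^2(E)$-orthogonal projection of $v$ onto $P_k(E)$, hence it is the best $L^2(E)$-approximation, so $\|v-\Pi_k^{0,E}v\|_{L^2(E)}\le\inf_{q\in P_k(E)}\|v-q\|_{L^2(E)}$. Applying Bramble--Hilbert on the star-shaped element gives $\inf_{q\in P_k(E)}\|v-q\|_{L^2(E)}\apprle h_E^{s+1}\,|v|_{H^{s+1}(E)}$ for $0\le s\le k$. Squaring, summing over $E\in\mathcal{T}_h$, and bounding $h_E\le h$ yields \eqref{interp_property_Pi_0}.

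For the $H^1$-projection the key observation is that the first relation in \eqref{proj_Pi_nabla} makes $\nabla\Pi_k^{\nabla,E}v$ the $L^2(E)$-orthogonal projection of $\nabla v$ onto $\nabla P_k(E)$. Consequently the seminorm of the error is itself a best approximation, $|v-\Pi_k^{\nabla,E}v|_{H^1(E)}=\inf_{q\in P_k(E)}|v-q|_{H^1(E)}\apprle h_E^{s}\,|v|_{H^{s+1}(E)}$ for $1\le s\le k$, again by Bramble--Hilbert. To upgrade this to the full $H^1$ norm I would control the $L^2$ part via a Poincar\'e-type inequality: the second relation in \eqref{proj_Pi_nabla} forces $\int_{\partial E}\bigl(v-\Pi_k^{\nabla,E}v\bigr)\,\dd s=0$, and for a function with vanishing boundary mean on a star-shaped element one has $\|v-\Pi_k^{\nabla,E}v\|_{L^2(E)}\apprle h_E\,|v-\Pi_k^{\nabla,E}v|_{H^1(E)}\apprle h_E^{s+1}\,|v|_{H^{s+1}(E)}$. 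Adding the squared seminorm and $L^2$ bounds (using $h_E\le h\le 1$ so that the seminorm term dominates), then summing over the mesh, produces \eqref{interp_property_Pi_nabla}.

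The main obstacle throughout is the uniformity of the constants in the local estimates: both the Bramble--Hilbert bound and the Poincar\'e inequality have constants that would degenerate as an element becomes thin or its inscribed ball shrinks, and it is exactly \ref{A1}, combined with the standard Dupont--Scott scaling theory on star-shaped domains, that keeps them bounded in terms of $\varrho$ alone. I would also remark that the possibly curved edges of $E$ cause no difficulty here, since the argument relies only on star-shapedness with respect to a ball and never on the polygonal nature of $\partial E$.
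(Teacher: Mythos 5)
Your proof is correct and follows essentially the same route as the paper: the best-approximation property of the $L^2$-projection (orthogonality plus Cauchy--Schwarz) combined with a Bramble--Hilbert estimate whose constant stays uniform thanks to \ref{A1}, followed by summation over the elements. For the $H^1$-projector the paper merely states the estimate ``can be proved similarly''; your explicit completion --- gradient orthogonality making $\abs*{v-\Pi_k^{\nabla,E}v}_{H^1(E)}$ a best approximation, plus a Poincar\'e--Friedrichs inequality exploiting the vanishing boundary mean to control the $L^2$ part --- is exactly the intended argument, so no discrepancy arises.
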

\begin{proof}
Let us denote by $R_E > 0$ the radius of the ball in $E\in\mathcal{T}_{h}$ satisfying \ref{A1}. For any $v\in L^{2}(E)$ and $q\in P_{k}(E)$ we can write
\begin{equation*}
\left\|v-\Pi_{k}^{0,E}v\right\|_{L^{2}(E)}^{2}=
\left(v-\Pi_{k}^{0,E}v,v-q\right)_{L^{2}(E)}+\left(v-\Pi_{k}^{0,E}v,q-\Pi_{k}^{0,E}v\right)_{L^{2}(E)} = \left(v-\Pi_{k}^{0,E}v,v-q\right)_{L^{2}(E)},
\end{equation*}
where we have used (\ref{proj_Pi_0}) together with $q-\Pi_{k}^{0,E}v\in P_{k}(E)$.
%
%
Then, by applying the Cauchy-Schwarz inequality, we easily get
%
%
%
\begin{equation*}
\left\|v-\Pi_{k}^{0,E}v\right\|_{L^{2}(E)}\leq\left\|v-q\right\|_{L^{2}(E)} \qquad \forall\, q\in P_{k}(E).
\end{equation*}
Now, assuming $v\in H^{s+1}(E)$ with $0\leq s\leq k$ and using the Bramble-Hilbert Lemma (see Lemma 4.3.8 in \cite{BrennerScott2008}), we have

\begin{equation*}
\left\|v-\Pi_{k}^{0,E}v\right\|_{L^{2}(E)}\leq\underset{q \in P_k(E)}{\text{inf}}\left\|v-q\right\|_{L^{2}(E)}\apprle C\hskip-.1cm\left(\frac{h_{E}}{R_E}\right)h_{E}^{s+1}\left\|v\right\|_{H^{s+1}(E)},
\end{equation*}
where the implicit constant depends only on $k$ and $C:\mathbf{R}^{+}\rightarrow\mathbf{R}^{+}$ is an increasing function.
Since, by virtue of Assumption \ref{A1}, the function $C\hskip-.08cm\left(\nicefrac{h_{E}}{R_E}\right)$ is uniformly bounded, we can easily get \eqref{interp_property_Pi_0}.
%
%
Finally, inequality (\ref{interp_property_Pi_nabla}) can be proved similarly.
\end{proof}
\subsection{The discrete variational formulation}\label{sec:DVF}

We present here a class of Galerkin type discretizations of Problem \eqref{model_problem_operator}, which includes,
but is not limited to, VEMs. In Section \ref{sec_5_discrete_scheme} we will give an example of CVEM that falls in the framework considered.
\

Let $Q_{h}^{k}\subset H_{0,\Gamma_{0}}^{1}(\Omega)$ and $X_{h}^{k}\subset H^{-\nicefrac{1}{2}}(\Gamma)$ denote two finite dimensional spaces associated to the meshes $\mathcal{T}_{h}$ and $\mathcal{T}_{h}^{\Gamma}$, respectively. 
Introducing the discrete space $V_{h}^{k}:=Q_{h}^{k}\times X_{h}^{k}$,
the Galerkin method applied to Problem (\ref{model_problem_variational}) reads: 
find $\hat{u}_{h}\in V_{h}^{k}$ such that
\begin{equation}\label{model_problem_galerkin}
\mathcal{A}_{\kappa,h}(\hat{u}_{h},\hat{v}_{h}):=\mathcal{B}_{\kappa,h}(\hat{u}_{h},\hat{v}_{h})+\mathcal{K}_{\kappa}(\hat{u}_{h},\hat{v}_{h})=\mathcal{L}_{f,h}(\hat{v}_{h}) \quad \forall\,\hat{v}_{h}\in V_{h}^{k},
\end{equation}
\noindent
where $\mathcal{A}_{\kappa,h}, \mathcal{B}_{\kappa,h}:V_{h}^{k}\times V_{h}^{k}\rightarrow\mathbf{C}$ and $\mathcal{L}_{f,h}:V_{h}^{k}\rightarrow\mathbf{C}$ 
 are suitable approximations of $\mathcal{A}_{\kappa}$, $\mathcal{B}_{\kappa}$ and $\mathcal{L}_{f}$, respectively. 

\

\noindent
In order to prove existence and uniqueness of the solution $\hat{u}_{h}\in V_{h}^{k}$ and to derive a priori error estimates, we preliminary introduce some assumptions on the discrete spaces, on the bilinear form $\mathcal{B}_{\kappa,h}$ and on the linear operator $\mathcal{L}_{f,h}$.\\

\noindent
We assume that the following properties for $Q_{h}^{k}$, $X_{h}^{k}$ and $\widetilde X_h^k := X_h^k \cap H_0^{-\nicefrac{1}{2}}(\Gamma)$ hold: for $1\leq s\leq k$
\begin{enumerate}[label=(\subscript{H1}{{\alph*}})]
    {\setlength\itemindent{50pt} \item\label{H1.a} $\underset{v_{h}\in Q_{h}^{k}}{\text{inf}} \left\|v-v_{h}\right\|_{H^{1}(\Omega)} \apprle h^{s} \left\|v\right\|_{H^{s+1}(\Omega)} \hspace{0.2cm}  \quad \quad \forall \, v\in H^{s+1}(\Omega)$};
    {\setlength\itemindent{50pt} \item\label{H1.b} $\underset{\mu_{h}\in X_{h}^{k}}{\text{inf}} \left\|\mu-\mu_{h}\right\|_{H^{-\nicefrac{1}{2}}(\Gamma)} \apprle h^{s} \left\|\mu\right\|_{H^{s-\nicefrac{1}{2}}(\Gamma)} \hspace{0.2cm}  \quad \forall \, \mu\in H^{s-\nicefrac{1}{2}}(\Gamma)$}; 
        {\setlength\itemindent{50pt} \item\label{H1.c} $\underset{\mu_{\0 h}\in \tilde X_{h}^{k}}{\text{inf}} \left\|\mu_{\0}-\mu_{\0 h}\right\|_{H^{-\nicefrac{1}{2}}(\Gamma)} \apprle h^{s} \left\|\mu_{\0} \right\|_{H^{s-\nicefrac{1}{2}}(\Gamma)} \hspace{0.2cm}  \quad \forall \, \mu_{\0} \in H^{s-\nicefrac{1}{2}}(\Gamma) \cap H_0^{-\nicefrac{1}{2}}(\Gamma)$}.
\end{enumerate}
\noindent 
According to the definition of the $\|\cdot\|_V$ norm, the above assumptions ensure the following approximation property for the product spaces $V_{h}^{k}$ and $\tilde V_{h}^{k} := Q_h^k \times \tilde X_h^k$:
\begin{itemize}
	\item given $\hat{v}=(v,\mu)\in H^{s+1}(\Omega)\times H^{s-\nicefrac{1}{2}}(\Gamma)$, there exists $\hat{v}_{h}^{I}=(v_{h}^{I},\mu_{h}^{I})\in V_{h}^{k}$ such that
	\begin{equation}\label{int_property} 
	\left\|\hat{v} - \hat{v}_{h}^{I}\right\|_V\apprle h^{s} \left(\left\|v\right\|_{H^{s+1}(\Omega)}+\left\|\mu\right\|_{H^{s-\nicefrac{1}{2}}(\Gamma)}\right); 
	\end{equation}
	\item given $\hat{v}_{\0}=(v,\mu_{\0})\in H^{s+1}(\Omega)\times (H^{s-\nicefrac{1}{2}} (\Gamma) \cap H_0^{-\nicefrac{1}{2}}(\Gamma))$, there exists $\hat{v}_{\0 h}^{I}=(v_{h}^{I},\mu_{\0 h}^{I})\in \tilde V_{h}^{k}$ such that
	\begin{equation}\label{int_property_0} 
	\left\|\hat{v}_{\0} - \hat{v}_{\0 h}^{I}\right\|_V\apprle h^{s} \left(\left\|v\right\|_{H^{s+1}(\Omega)}+\left\|\mu_{\0} \right\|_{H^{s-\nicefrac{1}{2}}(\Gamma)}\right).
	\end{equation}
\end{itemize}

\noindent
Concerning the bilinear form $\mathcal{B}_{\kappa,h}$, we assume that for all $\kappa \geq 0$: 
\begin{enumerate}[label=(\subscript{H2}{{\alph*}})]
    {\setlength\itemindent{50pt} \item\label{H2.a} $k$-consistency: $ \mathcal{B}_{\kappa,h}(\hat{q},\hat{v}_{h}) = \mathcal{B}_{\kappa}(\hat{q},\hat{v}_{h}) \quad \forall\,\hat{q}\in P_k(\mathcal{T}_h)\times X_{h}^{k}\ \text{and} \ \forall\,\hat{v}_{h}\in V_{h}^{k}$;}
    {\setlength\itemindent{50pt} \item\label{H2.b} continuity in $V$-norm:  $ \left|\B_{\kappa,h}(\hat{v}_{h},\hat{w}_{h})\right|\apprle\left\|\hat{v}_{h}\right\|_V \left\|\hat{w}_{h}\right\|_V \quad \forall \hat{v}_{h},\hat{w}_{h}\in V_{h}^{k}$}.
\end{enumerate}
\begin{remark}
It is worth noting that, in Assumption \ref{H2.a}, the evaluation of the bilinear form $\mathcal{B}_\kappa$ is well defined provided that the computation of the bilinear form $a(\cdot,\cdot)$ is split into the sum of the local contributions associated to the elements $E$ of $\mathcal{T}_h$. For simplicity of notation, here and in what follows, we assume that such splitting is considered whenever necessary. Moreover, we assume that the approximated bilinear form $\mathcal{B}_{\kappa,h}$ is well defined on the space $H^1(\mathcal{T}_h)$.
\end{remark}
\noindent
Assumptions \ref{H2.a} and \ref{H2.b} allow to prove the following consistency result.
\begin{Lemma} \label{Lemma:Bwithinter}
Let $\hat{v}:=\left(v,\mu\right)\in H^{s+1}(\Omega) \times H^{s-\nicefrac{1}{2}}(\Gamma)$, $1\leq s\leq k$, and $\hat{v}_{h}^{I}:=\left(v_{h}^{I},\mu_{h}^{I}\right)\in V_{h}^{k}$ the interpolant of $\hat{v}$ in $V_{h}^{k}$ such that relation (\ref{int_property}) holds. Then
\begin{equation*}
\left|\mathcal{B}_{\kappa}(\hat{v}_{h}^{I},\hat{w}_{h})-\mathcal{B}_{\kappa,h}(\hat{v}_{h}^{I},\hat{w}_{h})\right|\apprle h^{s}\left\|v\right\|_{H^{s+1}(\Omega)} \left\|\hat{w}_{h}\right\|_{V} \qquad \forall\,\hat{w}_{h}\in V_{h}^{k}.
\end{equation*}
\end{Lemma}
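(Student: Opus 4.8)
The plan is to exploit the $k$-consistency \ref{H2.a} by inserting a piecewise-polynomial comparison function whose first component is the polynomial projection of the \emph{smooth} datum $v$ (not of the discrete interpolant $v_h^I$), and then to control the two resulting defects by the continuity of $\mathcal{B}_\kappa$ and $\mathcal{B}_{\kappa,h}$ together with the approximation estimate \eqref{interp_property_Pi_nabla}. Concretely, I would set $\hat{\pi}:=\left(\Pi_k^{\nabla}v,\mu_h^I\right)$. Since $\Pi_k^{\nabla}v\in P_k(\mathcal{T}_h)$ and $\mu_h^I\in X_h^k$, we have $\hat{\pi}\in P_k(\mathcal{T}_h)\times X_h^k$, so Assumption \ref{H2.a} gives $\mathcal{B}_{\kappa,h}(\hat{\pi},\hat{w}_h)=\mathcal{B}_{\kappa}(\hat{\pi},\hat{w}_h)$ for every $\hat{w}_h\in V_h^k$.

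Adding and subtracting $\hat\pi$ and using this identity to cancel the consistency term, I obtain
\begin{equation*}
\mathcal{B}_{\kappa}(\hat{v}_h^I,\hat{w}_h)-\mathcal{B}_{\kappa,h}(\hat{v}_h^I,\hat{w}_h)
=\mathcal{B}_{\kappa}(\hat{v}_h^I-\hat{\pi},\hat{w}_h)-\mathcal{B}_{\kappa,h}(\hat{v}_h^I-\hat{\pi},\hat{w}_h).
\end{equation*}
The crucial structural observation is that $\hat{v}_h^I-\hat{\pi}=\left(v_h^I-\Pi_k^{\nabla}v,\,0\right)$: its $H^{-\nicefrac{1}{2}}(\Gamma)$-component vanishes because both $\hat{v}_h^I$ and $\hat{\pi}$ carry the same $\mu_h^I$. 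This is precisely what makes the final bound depend on $\left\|v\right\|_{H^{s+1}(\Omega)}$ alone, with no contribution from $\mu$.

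Next I would estimate each term on the right-hand side by continuity. Both forms are continuous on the broken space $H^1(\mathcal{T}_h)$: for $\mathcal{B}_{\kappa}$ via the elementwise splitting of $a$ and $m$, the trace theorem and Cauchy--Schwarz; for $\mathcal{B}_{\kappa,h}$ by Assumption \ref{H2.b}, extended to $H^1(\mathcal{T}_h)$ as granted by the Remark preceding the statement. This yields
\begin{equation*}
\left|\mathcal{B}_{\kappa}(\hat{v}_h^I,\hat{w}_h)-\mathcal{B}_{\kappa,h}(\hat{v}_h^I,\hat{w}_h)\right|
\apprle \left\|v_h^I-\Pi_k^{\nabla}v\right\|_{H^1(\mathcal{T}_h)}\left\|\hat{w}_h\right\|_V,
\end{equation*}
and it remains to control the first factor through a triangle inequality passing through $v$,
\begin{equation*}
\left\|v_h^I-\Pi_k^{\nabla}v\right\|_{H^1(\mathcal{T}_h)}
\leq \left\|v-v_h^I\right\|_{H^1(\Omega)}+\left\|v-\Pi_k^{\nabla}v\right\|_{H^1(\mathcal{T}_h)}
\apprle h^s\left\|v\right\|_{H^{s+1}(\Omega)},
\end{equation*}
where the first term is bounded by \ref{H1.a} (satisfied by the component $v_h^I$ of the interpolant) and the second by \eqref{interp_property_Pi_nabla}, both valid for $1\leq s\leq k$. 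Combining the last two displays proves the claim.

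The main obstacle is the judicious choice of the comparison function. Projecting the smooth $v$ rather than the discrete $v_h^I$ is what lets me avoid invoking an $H^1$-stability bound for $\Pi_k^{\nabla}$, since only the two directly controllable terms $\left\|v-v_h^I\right\|_{H^1(\Omega)}$ and $\left\|v-\Pi_k^{\nabla}v\right\|_{H^1(\mathcal{T}_h)}$ then appear; and keeping the same $\mu_h^I$ in $\hat\pi$ is what removes the $\left\|\mu\right\|_{H^{s-\nicefrac{1}{2}}(\Gamma)}$ contribution. The only remaining care is the rigorous justification of the continuity of both bilinear forms on the broken space $H^1(\mathcal{T}_h)$, which is exactly where the hypothesis recalled in that Remark is needed.
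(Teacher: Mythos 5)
Your proof is correct and follows essentially the same route as the paper's: the same comparison function $(\Pi_k^{\nabla}v,\mu_h^I)$, the same cancellation via the $k$-consistency \ref{H2.a}, continuity of $\mathcal{B}_{\kappa}$ and \ref{H2.b} on the broken space, and the same final triangle inequality combining \ref{H1.a} with \eqref{interp_property_Pi_nabla}. The only cosmetic difference is that you collapse the paper's two terms $I$ and $II$ into a single identity using bilinearity before estimating.
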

\begin{proof} By abuse of notation, let $ \Pi^{\nabla}_{k}\hat{v}:=(\Pi^{\nabla}_{k}v,\mu_{h}^{I})$. 
%
%
We start from the inequality
 \begin{equation}\label{eq:ineq_uno}
\left|\mathcal{B}_{\kappa}(\hat{v}_{h}^{I},\hat{w}_{h}) - \mathcal{B}_{\kappa,h}(\hat{v}_{h}^{I},\hat{w}_{h})\right|\leq
\left|\mathcal{B}_{\kappa}(\hat{v}_{h}^{I},\hat{w}_{h})-\mathcal{B}_{\kappa,h}(\Pi_{k}^{\nabla}\hat{v},\hat{w}_{h})\right|+\left|\mathcal{B}_{\kappa,h}(\Pi_{k}^{\nabla}\hat{v},\hat{w}_{h})-\mathcal{B}_{\kappa,h}(\hat{v}_{h}^{I},\hat{w}_{h})\right| =: I + II.
 \end{equation}
By using Assumption \ref{H2.a} and the continuity of $\mathcal{B}_{\kappa}$ in the $V$-norm, we get
%
 \begin{equation}\label{eq:ineq_due}
I=\left|\mathcal{B}_{\kappa}(\Pi_{k}^{\nabla}\hat{v}-\hat{v}_{h}^{I},\hat{w}_{h})\right|\apprle \left\|\Pi_{k}^{\nabla}\hat{v}-\hat{v}_{h}^{I}\right\|_{H^1(\mathcal{T}_h)\times H^{-\nicefrac{1}{2}}(\Gamma)}\left\|\hat{w}_{h}\right\|_{V}.
\end{equation}
Concerning the term $II$, from \ref{H2.b} we obtain
 \begin{equation}\label{eq:ineq_tre}
II=\left|\mathcal{B}_{\kappa,h}(\Pi_{k}^{\nabla}\hat{v}-\hat{v}_{h}^{I},\hat{w}_{h})\right|\apprle \left\|\Pi_{k}^{\nabla}\hat{v}-\hat{v}_{h}^{I}\right\|_{H^1(\mathcal{T}_h)\times H^{-\nicefrac{1}{2}}(\Gamma)}\left\|\hat{w}_{h}\right\|_{V}.
\end{equation}
%
%
Finally, by definition of $\Pi_{k}^{\nabla}\hat{v}$, using (\ref{interp_property_Pi_nabla}) and Assumption \ref{H1.a}
we can write
%
\begin{equation*}
\left\|\Pi_{k}^{\nabla}\hat v-\hat v_{h}^{I}\right\|_{H^1(\mathcal{T}_h)\times H^{-\nicefrac{1}{2}}(\Gamma)} = \left\|\Pi_{k}^{\nabla}v-v_{h}^{I}\right\|_{H^{1}(\mathcal{T}_h)}\leq \left\|\Pi_{k}^{\nabla}v-v\right\|_{H^{1}(\mathcal{T}_h)} + \left\|v-v_{h}^{I}\right\|_{H^{1}(\Omega)} \apprle h^{s}\left\|v\right\|_{H^{s+1}(\Omega)},
\end{equation*}
from which, combining \eqref{eq:ineq_due} and \eqref{eq:ineq_tre} with \eqref{eq:ineq_uno}, the thesis follows.
%
\end{proof}
%
Similarly, the following lemma can be proved.
\begin{Lemma} \label{Lemma:Bwithinter_0}
	Let $\hat v_{\0}:=\left(v,\mu_{\0}\right)\in H^{s+1}(\Omega) \times (H^{s-\nicefrac{1}{2}}(\Gamma) \cap H_0^{-\nicefrac{1}{2}}(\Gamma))$,  $1\leq s\leq k$, and $\hat{v}_{\0 h}^{I}:=\left(v_{h}^{I},\mu_{\0 h}^{I}\right)\in \tilde V_{h}^{k}$ the interpolant of $\hat{v}_{\0}$ in $\tilde V_{h}^{k}$ such that relation (\ref{int_property_0}) holds. Then
	\begin{equation*}
	\left|\mathcal{B}_{\0}(\hat{v}_{\0 h}^{I},\hat{w}_{\0 h})-\mathcal{B}_{\0,h}(\hat{v}_{\0 h}^{I},\hat{w}_{\0 h})\right|\apprle h^{s}\left\|v\right\|_{H^{s+1}(\Omega)} \left\|\hat{w}_{\0 h}\right\|_{V} \qquad \forall\,\hat{w}_{\0 h}\in \tilde V_{h}^{k}.
	\end{equation*}
\end{Lemma}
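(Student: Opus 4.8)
The plan is to replicate verbatim the argument of Lemma \ref{Lemma:Bwithinter}, specializing to $\kappa=0$ and replacing the spaces $V_h^k$ and $X_h^k$ by their zero-mean counterparts $\tilde V_h^k$ and $\tilde X_h^k$. I would begin, with the same abuse of notation as before, by setting $\Pi_k^\nabla \hat{v}_{\0} := (\Pi_k^\nabla v, \mu_{\0 h}^I)$ and inserting this auxiliary element through the triangle inequality, so as to split the quantity of interest into a consistency term $I := |\mathcal{B}_{\0}(\hat{v}_{\0 h}^I, \hat{w}_{\0 h}) - \mathcal{B}_{\0,h}(\Pi_k^\nabla \hat{v}_{\0}, \hat{w}_{\0 h})|$ and a continuity term $II := |\mathcal{B}_{\0,h}(\Pi_k^\nabla \hat{v}_{\0} - \hat{v}_{\0 h}^I, \hat{w}_{\0 h})|$.

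For $I$, I would invoke the $k$-consistency \ref{H2.a} with $\kappa=0$ to replace $\mathcal{B}_{\0,h}(\Pi_k^\nabla \hat{v}_{\0}, \hat{w}_{\0 h})$ by $\mathcal{B}_{\0}(\Pi_k^\nabla \hat{v}_{\0}, \hat{w}_{\0 h})$, reducing $I$ to $|\mathcal{B}_{\0}(\Pi_k^\nabla \hat{v}_{\0} - \hat{v}_{\0 h}^I, \hat{w}_{\0 h})|$, and then bound it by the continuity of $\mathcal{B}_{\0}$; for $II$, I would apply \ref{H2.b} directly. Both terms are thereby controlled by $\|\Pi_k^\nabla \hat{v}_{\0} - \hat{v}_{\0 h}^I\|_{H^1(\mathcal{T}_h) \times H^{-\nicefrac{1}{2}}(\Gamma)}\,\|\hat{w}_{\0 h}\|_V$. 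Since the two Lagrange-multiplier components coincide (both equal $\mu_{\0 h}^I$), this product norm collapses to $\|\Pi_k^\nabla v - v_h^I\|_{H^1(\mathcal{T}_h)}$, which by the triangle inequality together with (\ref{interp_property_Pi_nabla}) and \ref{H1.a} is $\apprle h^s \|v\|_{H^{s+1}(\Omega)}$, yielding the claim.

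The only step deserving genuine attention, and the sole place where the restriction to zero-mean data enters, is verifying that the $k$-consistency is legitimately applicable. This amounts to checking that $\Pi_k^\nabla \hat{v}_{\0} \in P_k(\mathcal{T}_h) \times X_h^k$ and $\hat{w}_{\0 h} \in V_h^k$, which follow at once from the containments $\tilde X_h^k = X_h^k \cap H_0^{-\nicefrac{1}{2}}(\Gamma) \subseteq X_h^k$ and $\tilde V_h^k \subseteq V_h^k$; the same containments guarantee that \ref{H2.b} and \ref{H1.a}, stated on the full spaces, hold \emph{a fortiori} on the restricted ones. I do not foresee a real obstacle: every ingredient (the projector estimate, the interpolation bound (\ref{int_property_0}) defining $\hat{v}_{\0 h}^I$, and Assumptions \ref{H1.a}, \ref{H2.a}, \ref{H2.b}) is already at our disposal, and the $\kappa=0$ specialization alters none of them.
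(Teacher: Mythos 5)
Your proposal is correct and follows essentially the same route as the paper: the paper proves Lemma \ref{Lemma:Bwithinter_0} by the remark ``similarly, the following lemma can be proved,'' i.e.\ precisely by repeating the proof of Lemma \ref{Lemma:Bwithinter} with $\kappa=0$ and the zero-mean spaces, which is what you do — splitting via the auxiliary element $\Pi_k^{\nabla}\hat v_{\0}=(\Pi_k^{\nabla}v,\mu_{\0 h}^I)$, using \ref{H2.a} and continuity for the consistency term, \ref{H2.b} for the other, and collapsing the product norm to $\|\Pi_k^{\nabla}v-v_h^I\|_{H^1(\mathcal{T}_h)}$ bounded by \eqref{interp_property_Pi_nabla} and \ref{H1.a}. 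Your explicit verification that the inclusions $\tilde X_h^k\subseteq X_h^k$ and $\tilde V_h^k\subseteq V_h^k$ make \ref{H2.a}, \ref{H2.b} applicable on the restricted spaces is the one point the paper leaves implicit, and it is handled correctly.
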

\noindent
In order to prove the main results of our theoretical analysis, we need to introduce further assumptions on the approximate bilinear form $\mathcal{B}_{\kappa,h}$. Denoting by $\mathcal{D}_{\kappa,h}:=\mathcal{B}_{\kappa,h}-\mathcal{B}_{0,h}$, we require:
\begin{enumerate}[label=(\subscript{H3}{{\alph*}})]
    {\setlength\itemindent{50pt} \item\label{H3.a} $\mathcal{D}_{\kappa,h}$ is continuous in the weaker $W$-norm, with $W:= L^{2}(\Omega)\times H^{-\nicefrac{1}{2}}(\Gamma)$:
    \begin{equation*}
    \left|\mathcal{D}_{\kappa,h}(\hat{v}_{h},\hat{w}_{h})\right|\apprle \left\|\hat{v}_{h}\right\|_{W} \left\|\hat{w}_{h}\right\|_{W} \qquad \forall\,\hat{v}_{h},\hat{w}_{h}\in V_{h}^{k};
    \end{equation*}}
    {\setlength\itemindent{50pt} \item\label{H3.b} $\mathcal{B}_{0,h}$ is  $\tilde V_h^k$-elliptic:
   \begin{equation*}
    \mathcal{B}_{0,h}(\hat{w}_{\0 h},\hat{w}_{\0 h}) \apprge\left\|\hat{w}_{\0 h}\right\|_{V}^{2} \qquad \forall\, \hat{w}_{\0 h} \in \tilde V_{h}^{k};
    \end{equation*}}
     {\setlength\itemindent{50pt} \item\label{H3.c}$k$-consistency in the second term of $\mathcal{B}_{0,h}$: 
        \begin{equation*} \mathcal{B}_{0,h}(\hat w_h,\hat{q}) = \mathcal{B}_0(\hat{w}_h,\hat{q}) \quad \forall\,\hat{q}\in P_k(\mathcal{T}_h)\times X_{h}^{k}\ \text{and} \ \forall\,\hat{w}_{h}\in V_{h}^{k}.
        \end{equation*}}
\end{enumerate}
\begin{remark}\label{rk:Dk}
We remark that Assumption \ref{H3.a} is the discrete counterpart of the continuity property of the bilinear form $\mathcal{D}_{\kappa}:=\mathcal{B}_{\kappa}-\mathcal{B}_{0}$. Indeed, according to the continuity of $\text{V}_{\kappa}-\text{V}_0$ and using the Cauchy-Schwarz inequality, we obtain: for $\hat{v}=(v,\mu)\in V$ and $\hat{w}=(w,\nu)\in V$
\begin{equation}\label{H3.a_B}
\left|\mathcal{D}_{\kappa}(\hat{v},\hat{w})\right|=\left|\kappa^2 m(v,w) - 2\langle \nu,(\text{V}_{\kappa}-\text{V}_0)\mu \rangle\right|\apprle\left\|\hat{v}\right\|_{W} \left\|\hat{w}\right\|_{W}.
\end{equation}
\end{remark}

\noindent
Assumptions \ref{H3.a}--\ref{H3.c} are used to prove the following Lemmas \ref{Lemma:BBwithinter}, \ref{Lemma:dualpb} and \ref{Lemma:ourdualpb}, which are then crucial to obtain the Ladyzhenskaya-Babu$\check{\text{s}}$ka-Brezzi  condition for the discrete bilinear form $\mathcal{A}_{\kappa,h}$.
\begin{Lemma} \label{Lemma:BBwithinter}
Let $\hat{v}_{h} = (v_h,\mu_h)\in\left(H^{s+1}(\Omega) \times H^{- \nicefrac{1}{2}}(\Gamma)\right)\cap V_{h}^{k}$ with $0\leq s\leq k$. Then
\begin{equation}
\left|\mathcal{D}_\kappa(\hat{v}_{h},\hat{w}_{h})-\mathcal{D}_{\kappa,h}(\hat{v}_{h},\hat{w}_{h})\right|\apprle h^{s+1} \left\|v_{h}\right\|_{H^{s+1}(\Omega)} \left\|\hat{w}_{h}\right\|_{W}, \quad\forall\, \hat{w}_{h}\in V_{h}^{k}.
\end{equation}
\end{Lemma}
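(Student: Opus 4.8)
The plan is to exploit the $k$-consistency of the discrete forms together with the fact that both $\mathcal{D}_\kappa$ and its discrete counterpart are continuous in the \emph{weaker} $W$-norm, which is precisely what produces the extra power $h^{s+1}$ instead of $h^{s}$. The crucial preliminary observation is that Assumption \ref{H2.a}, being valid for every $\kappa\geq0$, holds in particular both for the given $\kappa$ and for $\kappa=0$; subtracting the two identities shows that
\[
\mathcal{D}_{\kappa,h}(\hat q,\hat w_h)=\mathcal{D}_\kappa(\hat q,\hat w_h)\qquad\forall\,\hat q\in P_k(\mathcal{T}_h)\times X_h^k,\ \forall\,\hat w_h\in V_h^k .
\]
In other words, the difference operator $\mathcal{D}_\kappa-\mathcal{D}_{\kappa,h}$ annihilates functions that are polynomial in their first argument.

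First I would choose $\hat q:=(\Pi_k^0 v_h,\mu_h)$, which lies in $P_k(\mathcal{T}_h)\times X_h^k$ since $\mu_h\in X_h^k$ and $\Pi_k^0 v_h\in P_k(\mathcal{T}_h)$. Inserting and removing $\hat q$ by means of the consistency identity above, I would write
\[
\mathcal{D}_\kappa(\hat v_h,\hat w_h)-\mathcal{D}_{\kappa,h}(\hat v_h,\hat w_h)=\mathcal{D}_\kappa(\hat v_h-\hat q,\hat w_h)-\mathcal{D}_{\kappa,h}(\hat v_h-\hat q,\hat w_h).
\]
The decisive feature of this choice is that $\hat v_h-\hat q=(v_h-\Pi_k^0 v_h,0)$ has a \emph{vanishing} boundary component, so its $W$-norm collapses to $\|v_h-\Pi_k^0 v_h\|_{L^2(\Omega)}$ and the single-layer contributions in both $\mathcal{D}_\kappa$ and $\mathcal{D}_{\kappa,h}$ drop out; only the mass term survives.

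Next I would estimate the two pieces separately in the $W$-norm. For the continuous one I would invoke Remark \ref{rk:Dk} (equivalently, a direct Cauchy--Schwarz bound on the surviving term $-\kappa^2 m(v_h-\Pi_k^0 v_h,w_h)$), and for the discrete one the weaker continuity Assumption \ref{H3.a}; both yield a bound of the form $\|v_h-\Pi_k^0 v_h\|_{L^2(\Omega)}\,\|\hat w_h\|_W$. Finally the $L^2$-projection estimate \eqref{interp_property_Pi_0}, valid exactly in the range $0\leq s\leq k$, gives $\|v_h-\Pi_k^0 v_h\|_{L^2(\Omega)}\apprle h^{s+1}\|v_h\|_{H^{s+1}(\Omega)}$, and combining the three inequalities produces the claim.

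The main technical point to watch is that $\hat q$ does \emph{not} belong to $V_h^k$, since $\Pi_k^0 v_h$ is only a (generally discontinuous) piecewise polynomial and need not lie in $Q_h^k$; hence $\hat v_h-\hat q$ is not a discrete function either. Applying Assumption \ref{H3.a} to it therefore requires the continuity of $\mathcal{D}_{\kappa,h}$ to be read on the broken space $H^1(\mathcal{T}_h)$ (with the $L^2(\Omega)$ factor of the $W$-norm), an extension already incorporated in the standing assumption that $\mathcal{B}_{\kappa,h}$ is well defined on $H^1(\mathcal{T}_h)$, and of exactly the kind tacitly used in the proof of Lemma \ref{Lemma:Bwithinter}. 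This is where I expect the only genuine subtlety to lie; once that extension is granted, the remainder is a routine consistency-plus-approximation estimate.
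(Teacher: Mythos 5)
Your proposal is correct and follows essentially the same route as the paper: the paper likewise sets $\Pi_k^0\hat v_h:=(\Pi_k^0 v_h,\mu_h)$, uses the $k$-consistency \ref{H2.a} (valid for all $\kappa\geq 0$, hence for the difference $\mathcal{D}_{\kappa}$, $\mathcal{D}_{\kappa,h}$) to reduce to $\mathcal{D}_\kappa(\hat v_h-\Pi_k^0\hat v_h,\hat w_h)-\mathcal{D}_{\kappa,h}(\hat v_h-\Pi_k^0\hat v_h,\hat w_h)$, bounds both terms via \eqref{H3.a_B} and \ref{H3.a} in the $W$-norm, and concludes with \eqref{interp_property_Pi_0}. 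Your closing remark about $\Pi_k^0 v_h\notin Q_h^k$, so that \ref{H3.a} must be read on the broken space $H^1(\mathcal{T}_h)$, is a genuine subtlety that the paper's proof uses tacitly (covered only by its standing remark that $\mathcal{B}_{\kappa,h}$ is well defined on $H^1(\mathcal{T}_h)$), so making it explicit is a point in your favor.
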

\begin{proof} Let us denote, by abuse of notation, $\Pi_{k}^{0}\hat{v}_h:=\left(\Pi_{k}^{0}v_h,\mu_h\right)$. 
%
By adding and subtracting the term $\mathcal{D}_{\kappa,h}(\Pi_{k}^{0}\hat{v}_{h},\hat{w}_{h})$ and using Assumption \ref{H2.a}, we get
%
 \begin{equation*}
 \begin{aligned}
\left|\mathcal{D}_\kappa(\hat{v}_{h},\hat{w}_{h}) - \mathcal{D}_{\kappa,h}(\hat{v}_{h},\hat{w}_{h})\right|&\leq
\left|\mathcal{D}_\kappa(\hat{v}_{h}-\Pi_{k}^{0}\hat{v}_{h},\hat{w}_{h})\right|+\left|\mathcal{D}_{\kappa,h}(\hat{v}_{h}-\Pi_{k}^{0}\hat{v}_{h},\hat{w}_{h})\right|\\
&\apprle\left\|\hat{v}_{h}-\Pi_{k}^{0}\hat{v}_{h}\right\|_{W} \left\|\hat{w}_{h}\right\|_{W},
\end{aligned}
 \end{equation*}
the last inequality directly following from (\ref{H3.a_B}) and \ref{H3.a}.
%
%
%
%
Finally, by definition of $\Pi_{k}^{0}\hat{v}_h$ and using (\ref{interp_property_Pi_0}),
we can write
\begin{equation*}
\left\|\hat{v}_{h}-\Pi_{k}^{0}\hat{v}_{h}\right\|_{W}=\left\|v_{h}-\Pi_{k}^{0}v_{h}\right\|_{L^{2}(\Omega)}\apprle h^{s+1}\left\|v_{h}\right\|_{H^{s+1}(\Omega)},
\end{equation*}
from which the thesis follows.
%
\end{proof}
 \begin{Lemma}\label{Lemma:dualpb} 
Let $\hat{v}_{\0}=(v, \mu_{\0})\in \tilde V$. There exists one and only one $\hat v_{\0 h}=(v_{h},\mu_{\0 h})\in \tilde V_{h}^{k}$ such that 
\begin{equation}\label{pb:BhB}
\mathcal{B}_{0,h}(\hat w_{\0 h},\hat v_{\0 h})= \mathcal{B}_{0}(\hat w_{\0 h},\hat v_{\0})  \quad \forall\, \hat w_{\0 h}=(w_{h},\nu_{\0 h}) \in \tilde V_{h}^{k}.
\end{equation}
Moreover, it holds:
\begin{subequations}\label{eq:firstadv}
	\begin{empheq}{align}
	\label{eq:firstadv_a}
    &\left\| {\hat v}_{\0 h}\right\|_{V}\apprle\left\|{\hat v_{\0}}\right\|_{V}, \\
    \label{eq:firstadv_b}
    &\left\| \mu_{\0 h}-\mu_{\0}\right\|_{H^{-\nicefrac{3}{2}}(\Gamma)}\apprle h\left\|\hat v_{\0}\right\|_{V}, \\
    \label{eq:firstadv_c} &\left\|v_{h}-v\right\|_{L^2(\Omega)}\apprle h\left\|\hat v_{\0}\right\|_V.
	\end{empheq}
\end{subequations}
\end{Lemma}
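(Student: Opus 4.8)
The plan is to read \eqref{pb:BhB} as a square linear system on the finite-dimensional space $\tilde V_{h}^{k}$, obtain the stability bound \eqref{eq:firstadv_a} directly from ellipticity, and then derive the two weaker-norm estimates \eqref{eq:firstadv_b}--\eqref{eq:firstadv_c} by an Aubin--Nitsche duality argument built on the elliptic regularity of Remark \ref{rk:isomorfismi_k0} and the consistency estimate of Lemma \ref{Lemma:Bwithinter_0}.

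First, for existence and uniqueness I would note that $\hat w_{\0 h}\mapsto\mathcal{B}_{\0}(\hat w_{\0 h},\hat v_{\0})$ is a continuous linear functional on $\tilde V_{h}^{k}$ (continuity of $\mathcal{B}_{\0}$), while by Assumption \ref{H3.b} the form $\mathcal{B}_{\0,h}$ is $\tilde V_{h}^{k}$-elliptic. Hence the finite-dimensional operator $\hat v\mapsto\mathcal{B}_{\0,h}(\cdot,\hat v)$ is injective, therefore an isomorphism, and \eqref{pb:BhB} is uniquely solvable (equivalently, Lax--Milgram applied to the adjoint form). Estimate \eqref{eq:firstadv_a} follows by testing with $\hat w_{\0 h}=\hat v_{\0 h}$: ellipticity gives $\|\hat v_{\0 h}\|_{V}^{2}\apprle\mathcal{B}_{\0,h}(\hat v_{\0 h},\hat v_{\0 h})=\mathcal{B}_{\0}(\hat v_{\0 h},\hat v_{\0})$, and continuity of $\mathcal{B}_{\0}$ bounds the right-hand side by $\|\hat v_{\0 h}\|_{V}\|\hat v_{\0}\|_{V}$.

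For \eqref{eq:firstadv_c} I would introduce the dual solution $\hat z=(z,\zeta)\in\tilde V$ of $\mathcal{B}_{\0}(\hat z,\hat\psi)=(\psi,v_{h}-v)_{L^{2}(\Omega)}$ for all $\hat\psi=(\psi,\eta)\in\tilde V$; for \eqref{eq:firstadv_b}, given $\phi\in H^{\nicefrac{3}{2}}(\Gamma)$, I would instead use the right-hand side $\langle\eta,\phi\rangle_{\Gamma}$. In the two cases the data lies, respectively, in the first (volume) and third (boundary) slot of the space $H^{s-1}(\Omega)\times H^{s-\nicefrac{1}{2}}(\Gamma)\times H^{s+\nicefrac{1}{2}}(\Gamma)$ with $s=1$, so Remark \ref{rk:isomorfismi_k0} yields $\hat z\in H^{2}(\Omega)\times H^{\nicefrac{1}{2}}(\Gamma)$ with $\|z\|_{H^{2}(\Omega)}+\|\zeta\|_{H^{\nicefrac{1}{2}}(\Gamma)}\apprle\|v_{h}-v\|_{L^{2}(\Omega)}$ (resp. $\apprle\|\phi\|_{H^{\nicefrac{3}{2}}(\Gamma)}$). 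Testing each dual problem with the error $\hat e:=\hat v_{\0 h}-\hat v_{\0}=(v_{h}-v,\mu_{\0 h}-\mu_{\0})\in\tilde V$ reproduces $\|v_{h}-v\|_{L^{2}(\Omega)}^{2}$ (resp. $\langle\mu_{\0 h}-\mu_{\0},\phi\rangle_{\Gamma}$) as $\mathcal{B}_{\0}(\hat z,\hat e)$. Note that $\zeta\in H^{\nicefrac{1}{2}}(\Gamma)\cap H_{0}^{-\nicefrac{1}{2}}(\Gamma)$ since $\hat z\in\tilde V$, which is exactly what makes both \eqref{int_property_0} and Lemma \ref{Lemma:Bwithinter_0} applicable to $\hat z$.

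The core of the argument, and the step I expect to be the main obstacle, is the treatment of $\mathcal{B}_{\0}(\hat z,\hat e)$. Taking the interpolant $\hat z_{h}^{I}\in\tilde V_{h}^{k}$ of $\hat z$ from \eqref{int_property_0}, I would split $\mathcal{B}_{\0}(\hat z,\hat e)=\mathcal{B}_{\0}(\hat z-\hat z_{h}^{I},\hat e)+\mathcal{B}_{\0}(\hat z_{h}^{I},\hat e)$. The first piece is handled by continuity of $\mathcal{B}_{\0}$, by \eqref{int_property_0} with $s=1$, and by \eqref{eq:firstadv_a} (together with the triangle inequality, so that $\|\hat e\|_{V}\apprle\|\hat v_{\0}\|_{V}$), producing a factor $h$. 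For the second piece, I would write $\mathcal{B}_{\0}(\hat z_{h}^{I},\hat e)=\mathcal{B}_{\0}(\hat z_{h}^{I},\hat v_{\0 h})-\mathcal{B}_{\0}(\hat z_{h}^{I},\hat v_{\0})$ and then invoke \eqref{pb:BhB} with $\hat w_{\0 h}=\hat z_{h}^{I}$ to replace $\mathcal{B}_{\0}(\hat z_{h}^{I},\hat v_{\0})$ by $\mathcal{B}_{\0,h}(\hat z_{h}^{I},\hat v_{\0 h})$; what remains is precisely the consistency error $(\mathcal{B}_{\0}-\mathcal{B}_{\0,h})(\hat z_{h}^{I},\hat v_{\0 h})$. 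Because $\hat z_{h}^{I}$ is the interpolant of the $H^{2}\times H^{\nicefrac{1}{2}}$-regular function $\hat z$, Lemma \ref{Lemma:Bwithinter_0} with $s=1$ bounds this term by $h\,\|z\|_{H^{2}(\Omega)}\|\hat v_{\0 h}\|_{V}$. Combining the two pieces with the regularity bound on $\|z\|_{H^{2}(\Omega)}$ and with \eqref{eq:firstadv_a} gives $\|v_{h}-v\|_{L^{2}(\Omega)}^{2}\apprle h\,\|v_{h}-v\|_{L^{2}(\Omega)}\|\hat v_{\0}\|_{V}$ (resp. $\langle\mu_{\0 h}-\mu_{\0},\phi\rangle_{\Gamma}\apprle h\,\|\phi\|_{H^{\nicefrac{3}{2}}(\Gamma)}\|\hat v_{\0}\|_{V}$); dividing, and taking the supremum over $\phi$ in the second case, yields \eqref{eq:firstadv_c} and \eqref{eq:firstadv_b}. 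The delicate points are the correct choice of slot and of data component in each dual problem so that \eqref{pb:BhB} applies verbatim, and the verification that the dual solution has the mean-zero and smoothness needed for Lemma \ref{Lemma:Bwithinter_0}.
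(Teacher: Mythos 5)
Your proposal is correct and follows essentially the same route as the paper's own proof: existence, uniqueness and \eqref{eq:firstadv_a} from the ellipticity \ref{H3.b} together with the continuity of $\mathcal{B}_{0}$, and then \eqref{eq:firstadv_b}--\eqref{eq:firstadv_c} via the identical Aubin--Nitsche duality argument, with the dual data placed in the boundary slot $(0,0,\eta)$ and the volume slot $(v_{h}-v,0,0)$ respectively, the same interpolant splitting of $\mathcal{B}_{0}(\hat z,\hat e)$, the substitution via \eqref{pb:BhB}, and Lemma \ref{Lemma:Bwithinter_0} with $s=1$ for the consistency term. There are no gaps; the only differences are cosmetic (order of the two estimates and notation for the dual solution).
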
 
\begin{proof}
Existence and uniqueness of $\hat{v}_{\0 h}\in \tilde V_{h}^{k}$, solution of (\ref{pb:BhB}), follow from Assumptions \ref{H2.b} and \ref{H3.b}. Moreover, \eqref{eq:firstadv_a} holds according to \ref{H3.b} and the continuity of the bilinear form $\mathcal{B}_{0}$ (see Remark \ref{rk:isomorfismi_k0}).
%
%

\noindent
In order to prove \eqref{eq:firstadv_b}, by using a duality argument, it is sufficient to show that:
\begin{equation}\label{eq:B.7}
\left|\langle\mu_{\0 h}-\mu_{\0},\eta\rangle_{H^{-\nicefrac{3}{2}}(\Gamma)\times H^{\nicefrac{3}{2}}(\Gamma)}\right|\apprle h \left\|\hat{v}_{\0 h}\right\|_V \left\|\eta\right\|_{H^{\nicefrac{3}{2}}(\Gamma)} \quad \forall \eta\in H^{\nicefrac{3}{2}}(\Gamma).
\end{equation} 
Starting from $\eta\in H^{\nicefrac{3}{2}}(\Gamma)$, we consider $\widetilde{w}:=\left(0,0,\eta\right)\in L^{2}(\Omega)\times H^{\nicefrac{1}{2}}(\Gamma)\times H^{\nicefrac{3}{2}}(\Gamma)$ and we set $\hat{w}_{\0}:=\mathcal{B}_{0}^{-1}\widetilde{w}$. Then, we have:
\begin{equation} \label{eq:B.8}
\mathcal{B}_{0}(\hat{w}_{\0},\hat{z}_{\0})=\mathcal{B}_{0}(\mathcal{B}_{0}^{-1}\widetilde{w},\hat{z}_{\0}) = (\mathcal{B}_{0}\mathcal{B}_{0}^{-1}\widetilde{w})(\hat{z}_{\0}) = \langle\zeta_{\0},\eta\rangle_{H^{-\nicefrac{3}{2}}(\Gamma) \times H^{\nicefrac{3}{2}}(\Gamma) } \quad \forall\, \hat{z}_{\0}=(z,\zeta_{\0})\in \tilde V.
\end{equation}
From the continuity of $\mathcal{B}_{0}^{-1}:L^{2}(\Omega)\times H^{\nicefrac{1}{2}}(\Gamma)\times H^{\nicefrac{3}{2}}(\Gamma)\rightarrow H^{2}(\Omega)\times H^{\nicefrac{1}{2}}(\Gamma)$ (see  Remark \ref{rk:isomorfismi_k0}), it follows that:
\begin{equation} \label{eq:B.9}
\left\|\hat{w}_{\0}\right\|_{H^2(\Omega) \times H^{\nicefrac{1}{2}}(\Gamma)}\apprle \left\|\eta\right\|_{H^{\nicefrac{3}{2}}(\Gamma)}.
\end{equation}
Therefore, by choosing  $\hat{z}_{\0}=\hat{v}_{\0 h}-\hat{v}_{\0}$ in \eqref{eq:B.8}, we can write
\begin{equation*}
\langle\mu_{\0 h}-\mu_{\0},\eta\rangle_{H^{-\nicefrac{3}{2}}(\Gamma) \times H^{\nicefrac{3}{2}}(\Gamma)}=\mathcal{B}_{0}(\hat{w}_{\0},\hat{v}_{\0 h}-\hat{v}_{\0})=\mathcal{B}_{0}(\hat{w}_{\0}-\hat{w}_{\0 h},\hat{v}_{\0 h}-\hat{v}_{\0})+\mathcal{B}_{0}(\hat{w}_{\0 h},\hat{v}_{\0 h})-\mathcal{B}_{0}(\hat{w}_{\0 h},\hat{v}_{\0}).
\end{equation*}
Since $\hat{v}_{\0 h}\in \tilde V_{h}^{k}$ is the solution of (\ref{pb:BhB}), we rewrite the previous identity as follows:
\begin{eqnarray}\label{eq:est_32}
&\left|\langle\mu_{\0 h}-\mu_{\0},\eta\rangle_{H^{-\nicefrac{3}{2}}(\Gamma) \times H^{\nicefrac{3}{2}}(\Gamma)}\right|=\left|\mathcal{B}_{0}(\hat{w}_{\0}-\hat{w}_{\0 h},\hat{v}_{\0 h}-\hat{v}_{\0})+\mathcal{B}_{0}(\hat{w}_{\0 h},\hat{v}_{\0 h})-\mathcal{B}_{0,h}(\hat{w}_{\0 h},\hat{v}_{\0 h})\right|\nonumber\\
&\leq\left|\mathcal{B}_{0}(\hat{w}_{\0}-\hat{w}_{\0 h},\hat{v}_{\0 h}-\hat{v}_{\0})\right|+\left|\mathcal{B}_{0}(\hat{w}_{\0 h},\hat{v}_{\0 h})-\mathcal{B}_{0,h}(\hat{w}_{\0 h},\hat{v}_{\0 h})\right|=: I + II.
\end{eqnarray}
Choosing in \eqref{eq:est_32} $\hat{w}_{\0 h}=\hat{w}_{\0 h}^{I}$, the interpolant of $\hat{w}_{\0}\in \tilde V$ in $\tilde V_{h}^{k}$ such that (\ref{int_property_0}) holds, due to Lemma \ref{Lemma:Bwithinter_0} with $s=1$ and (\ref{eq:firstadv_a}), we can estimate $II$ as follows:
\begin{equation}\label{eq:II}
II\apprle h\left\|\hat{v}_{\0}\right\|_{V}\left\|\hat{w}_{\0}\right\|_{H^{2}(\Omega)\times H^{\nicefrac{1}{2}}(\Gamma)}.
\end{equation}
Combining the continuity of $\mathcal{B}_{0}$, (\ref{int_property_0}) with $s=1$ and (\ref{eq:firstadv_a}), we have:
\begin{equation}\label{eq:I}
I\apprle \left\|\hat{v}_{\0 h}-\hat{v}_{\0}\right\|_{V} \left\|\hat{w}_{\0}-\hat{w}_{\0 h}^{I}\right\|_{V} \apprle h \left\|\hat{v}_{\0 h}-\hat{v}_{\0}\right\|_{V} \left\| \hat w_{\0}\right\|_{H^{2}(\Omega) \times H^{\nicefrac{1}{2}}(\Gamma)} \apprle h\left\| \hat{v}_{\0} \right\|_{V}\left\|\hat{w}_{\0} \right\|_{H^{2}(\Omega) \times H^{\nicefrac{1}{2}}(\Gamma)}.
\end{equation}
%
Finally, from (\ref{eq:B.9}) and \eqref{eq:est_32} together with \eqref{eq:II} and \eqref{eq:I}, inequality \eqref{eq:B.7} directly follows.\\

\noindent
Inequality (\ref{eq:firstadv_c}) can be proved similarly to (\ref{eq:firstadv_b}). Indeed, if we consider $\widetilde{w}:=\left(v_{h}-v,0,0\right) \in L^2(\Omega) \times H^{\nicefrac{1}{2}}(\Gamma) \times H^{\nicefrac{3}{2}}(\Gamma)$ and $\hat{w}_{\0}=\mathcal{B}_{0}^{-1}\widetilde{w}$ in (\ref{eq:B.8}), we get
\begin{equation}\label{eq:B.8_bis}
\mathcal{B}_{0}(\hat{w}_{\0},\hat{z}_{\0})=\left(v_{h}-v,z\right)_{L^{2}(\Omega)} \qquad \forall\,\hat{z}_{\0}=(z,\zeta_{\0})\in \tilde V.
\end{equation}
\noindent
Then, choosing $\hat{z}_{\0}=\hat{v}_{\0 h}-\hat{v}_{\0}$ in (\ref{eq:B.8_bis}), we have
\begin{equation*}\label{eq:B.8_bis_z}
\mathcal{B}_{0}(\hat{w}_{\0},\hat{v}_{\0 h}-\hat{v}_{\0})=\left(v_{h}-v,v_{h}-v\right)_{L^{2}(\Omega)}=\left\|v_{h}-v\right\|_{L^{2}(\Omega)}^{2}.
\end{equation*}
Finally, by taking into account the continuity of $\mathcal{B}_{0}^{-1}$, we obtain
\begin{equation*} \label{eq:B.9_bis}
\left\|\hat{w}_{\0}\right\|_{H^2(\Omega) \times H^{\nicefrac{1}{2}}(\Gamma)}\apprle \left\|v_{h}-v\right\|_{L^{2}(\Omega)}
\end{equation*}
and, proceeding as we did to estimate \eqref{eq:est_32}, we write
\begin{equation*}
\left\|v_{h}-v\right\|_{L^2(\Omega)}^{2}\apprle h\left\|\hat{w}_{\0}\right\|_{H^2(\Omega)\times H^{\nicefrac{1}{2}}(\Gamma)}\left\|\hat{v}_{\0} \right\|_{V} \apprle h\left\|v_{h}-v\right\|_{L^2(\Omega)}\left\|\hat{v}_{\0}\right\|_{V},
\end{equation*}
from which (\ref{eq:firstadv_c}) follows.
\end{proof}

\begin{Lemma} \label{Lemma:ourdualpb} 
Let $\hat v = (v,\mu) \in V$. There exists $\hat v_h = (v_h,\mu_h) \in V_h^k$ such that
\begin{equation*}
\B_{0,h}(\hat w_h,\hat v_h) = \B_0(\hat w_h,\hat v) + \B_0((0,\bar \eta_h),\hat v_h - \hat v) \quad \forall \hat w_h \in (w_h,\eta_h) \in V_h^k
\end{equation*}
where $\bar \eta_h = \nicefrac{\langle \eta_h,1 \rangle}{\langle 1,1 \rangle}$. Moreover, it holds
\begin{subequations}\label{eq:secondadv}
	\begin{empheq}{align}
	\label{eq:secondadv_a}
    & \norma{\hat v_h}_V \apprle\left\|\hat{v}\right\|_{V}, \\
    \label{eq:secondadv_b}
    &\norma{\mu_h - \mu}_{H^{-\nicefrac{3}{2}}(\Gamma)}\apprle h \norma{\hat v}_V, \\
    \label{eq:fsecondadv_c} &\left\|v_{h}-v\right\|_{L^2(\Omega)} \apprle h \norma{\hat v}_V.
	\end{empheq}
\end{subequations}
\end{Lemma}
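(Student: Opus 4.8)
The plan is to reduce everything to Lemma~\ref{Lemma:dualpb} by isolating the single constant mode of the multiplier, which is precisely the direction responsible both for the loss of ellipticity of $\mathcal{B}_{0,h}$ on the whole of $V_h^k$ (assumption~\ref{H3.b} grants only $\tilde V_h^k$-ellipticity) and for the announced absence of uniqueness. Accordingly, I would first split the datum: setting $\bar\mu := \nicefrac{\langle\mu,1\rangle}{\langle 1,1\rangle}$ and $\mu_{\0} := \mu - \bar\mu \in H_0^{-\nicefrac{1}{2}}(\Gamma)$, I let $\hat v_{\0} := (v,\mu_{\0}) \in \tilde V$. This splitting is stable, in the sense that $\|\hat v_{\0}\|_V \apprle \|\hat v\|_V$ and $\|(0,\bar\mu)\|_V = \|\bar\mu\|_{H^{-\nicefrac{1}{2}}(\Gamma)} \apprle \|\mu\|_{H^{-\nicefrac{1}{2}}(\Gamma)} \le \|\hat v\|_V$.

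Applying Lemma~\ref{Lemma:dualpb} to $\hat v_{\0}$ produces the unique $\hat v_{\0 h} = (v_h,\mu_{\0 h}) \in \tilde V_h^k$ solving \eqref{pb:BhB}, together with the bounds \eqref{eq:firstadv_a}--\eqref{eq:firstadv_c}. I then propose the candidate $\hat v_h := \hat v_{\0 h} + (0,\bar\mu) = (v_h,\,\mu_{\0 h}+\bar\mu)$, which belongs to $V_h^k$ provided the constants lie in $X_h^k$ (as for the Lagrangian boundary spaces used in Section~\ref{sec_5_discrete_scheme}).

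The core of the argument is to verify the modified identity, and here the correction term $\mathcal{B}_0((0,\bar\eta_h),\hat v_h - \hat v)$ is exactly what neutralises the constant mode. For an arbitrary test function $\hat w_h = (w_h,\eta_h) \in V_h^k$ I would set $\eta_{\0 h} := \eta_h - \bar\eta_h$, so that $\hat w_{\0 h} := (w_h,\eta_{\0 h}) \in \tilde V_h^k$ (again using that constants belong to $X_h^k$). Expanding both sides by bilinearity and using the consistency assumption~\ref{H2.a} to replace $\mathcal{B}_{0,h}((0,\bar\eta_h),\hat v_h)$ with $\mathcal{B}_0((0,\bar\eta_h),\hat v_h)$, all the $\bar\eta_h$-contributions cancel and the identity collapses to $\mathcal{B}_{0,h}(\hat w_{\0 h},\hat v_h) = \mathcal{B}_0(\hat w_{\0 h},\hat v)$. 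Writing here $\hat v_h = \hat v_{\0 h} + (0,\bar\mu)$ and $\hat v = \hat v_{\0} + (0,\bar\mu)$, and invoking the consistency assumption~\ref{H3.c} to cancel the two $(0,\bar\mu)$-contributions, this last relation reduces precisely to \eqref{pb:BhB} for $\hat v_{\0 h}$, which holds by construction. This establishes existence.

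It then remains to transfer the three estimates. Inequality \eqref{eq:secondadv_a} follows from the triangle inequality, \eqref{eq:firstadv_a} and the stability of the splitting. For \eqref{eq:secondadv_b} and \eqref{eq:fsecondadv_c} the decisive observation is that the constant cancels in the relevant differences: $\mu_h - \mu = \mu_{\0 h} - \mu_{\0}$ and the first component is unchanged, so that \eqref{eq:firstadv_b} and \eqref{eq:firstadv_c} applied to $\hat v_{\0}$ give the claims at once, after bounding $\|\hat v_{\0}\|_V \apprle \|\hat v\|_V$. I expect the main obstacle to be conceptual rather than computational: recognising that the entire discrepancy with Lemma~\ref{Lemma:dualpb} is carried by one scalar degree of freedom, and that the tailored term $\mathcal{B}_0((0,\bar\eta_h),\cdot)$ is designed to annihilate exactly that mode. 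Once the candidate $\hat v_h = \hat v_{\0 h} + (0,\bar\mu)$ is guessed, every remaining step is routine bilinearity anchored on \ref{H2.a}, \ref{H3.c} and on the inclusion of constants in $X_h^k$, the latter being what guarantees that the shifted test function $\hat w_{\0 h}$ falls back into $\tilde V_h^k$.
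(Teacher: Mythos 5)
Your proposal is correct and takes essentially the same route as the paper's proof: the same splitting $\hat v = \hat v_{\0} + (0,\bar\mu)$ with $\hat v_{\0} = (v,\mu-\bar\mu)\in\tilde V$, the same invocation of Lemma \ref{Lemma:dualpb} to get $\hat v_{\0 h}$, the same candidate $\hat v_h = \hat v_{\0 h} + (0,\bar\mu)$, the same use of \ref{H2.a} and \ref{H3.c} to cancel the constant modes $(0,\bar\eta_h)$ and $(0,\bar\mu)$, and the identical transfer of the three estimates via $\mu_h-\mu=\mu_{\0 h}-\mu_{\0}$. The only differences are presentational -- you verify the identity backwards from the guessed candidate while the paper derives it forwards starting from $\mathcal{B}_0(\hat w_h,\hat v)$ -- and your explicit remark that constants must belong to $X_h^k$ makes precise a point the paper leaves implicit.
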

\begin{proof}
Let consider $\hat v_{\0} = (v,\mu_0) := (v, \mu - \bar \mu)$, with $\bar \mu = \nicefrac{\langle \mu,1 \rangle}{\langle 1,1 \rangle}$, and $\hat w_{\0 h} := (w_h, \eta_h- \bar \eta_h) \in \tilde V_h^k$. Then we have
\begin{equation*}
\B_0(\hat w_h,\hat v) = \B_0(\hat w_{\0 h},\hat v) + \B_0((0,\bar \eta_h),\hat v) = \B_0(\hat w_{\0 h},\hat v_{\0}) + \B_0((0,\bar \eta_h),\hat v) + \B_0(\hat w_{\0 h},(0,\bar \mu)).
\end{equation*}
According to Lemma \ref{Lemma:dualpb} applied to the first term at the right hand side of the above equality, there exists a unique $\hat v_{\0 h} = (v_h, {\mu_{\0 h}}) \in \tilde V_h^k$ such that
\begin{equation*}
\B_0(\hat w_h,\hat v) = \B_{0,h}(\hat w_{\0 h},\hat v_{\0 h}) + \B_0((0,\bar \eta_h),\hat v) + \B_0(\hat w_{\0 h},(0,\bar \mu)).
\end{equation*}
Using \ref{H3.c} with $\hat{q} = (0,\bar \mu)$, we can write
\begin{equation*}
\B_0(\hat w_h,\hat v) = \B_{0,h}(\hat w_{\0 h},\hat v_{\0 h}) + \B_0((0,\bar \eta_h),\hat v) + \B_{0,h}(\hat w_{\0 h},(0,\bar \mu)) = \B_{0,h}(\hat w_{\0 h},\hat v_h) + \B_0((0,\bar \eta_h),\hat v),
\end{equation*}
where we have set $\hat v_h = (v_h,\mu_h) := (v_h, {\mu_{\0 h}}+\bar \mu) \in V_h^k$.
Moreover, by adding and subtracting in this latter the term $\B_0((0,\bar \eta_h),\hat v_h)$ and using \ref{H2.a}, we get
\begin{equation*}
\B_0(\hat w_h,\hat v) = \B_{0,h}(\hat w_h,\hat v_h) + \B_0((0,\bar \eta_h),\hat v - \hat v_h).
\end{equation*}
By applying the Cauchy-Schwarz inequality to estimate the term $$\norma{\bar\mu}_{H^{-\nicefrac{1}{2}}(\Gamma)}\apprle \norma{\mu}_{H^{-\nicefrac{1}{2}}(\Gamma)},$$ and  \eqref{eq:firstadv_a}, we prove \eqref{eq:secondadv_a} as follows:
$$\norma{\hat v_h}_V  = \norma{\hat v_{\0 h} + (0,\bar\mu)}_V \apprle \norma{\hat v_{\0}}_V + \norma{\bar\mu}_{H^{-\nicefrac{1}{2}}(\Gamma)} \apprle
\left\|\hat v\right\|_{V} + \norma{\mu}_{H^{-\nicefrac{1}{2}}(\Gamma)} \apprle\left\|\hat{v}\right\|_{V}.$$
Finally, by using \eqref{eq:firstadv_b}-\eqref{eq:firstadv_c} we easily prove \eqref{eq:secondadv_b} and \eqref{eq:fsecondadv_c}:
	\begin{align*} 
    &\norma{\mu_h - \mu}_{H^{-\nicefrac{3}{2}}(\Gamma)} = \left\|\mu_{\0 h}-\mu_0\right\|_{H^{-\nicefrac{3}{2}}(\Gamma)}\apprle h\left\|\hat v_{\0}\right\|_{V} \apprle h \norma{\hat v}_V, \\ \nonumber &\left\|v_{h}-v\right\|_{L^2(\Omega)}\apprle h\left\|\hat v_{\0}\right\|_V \apprle h \norma{\hat v}_V.
\end{align*}   \qedhere	
\end{proof} 
%
%
%
%
%
\subsection{Error estimate in the energy norm}
In the present section we show the validity of the inf-sup condition for the discrete bilinear form $\A_{\kappa,h}$, with $\kappa>0$, and we prove that the discrete scheme has the optimal approximation order, providing for the optimal error estimate in the $V$-norm.
\begin{Theorem}\label{th:infsup}
Assume that $\kappa^2$ is not an eigenvalue of the Laplacian in $\Omega$ endowed with a Dirichlet boundary condition on $\Gamma$. Then, for $h$ small enough,
\begin{equation*}
\sup_{\substack{\hat v_{h} \in V_h	\\	
	 \hat v_{h}\ne 0}} \frac{\A_{\kappa,h}(\hat w_h , \hat v_h)}{\norma{\hat v_h}_V} \apprge \norma{\hat w_h}_{V} \quad \forall \, \hat w_h \in V_h.
\end{equation*}
\end{Theorem}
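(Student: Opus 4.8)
The plan is to transfer the continuous inf-sup condition for $\A_\kappa$ down to the discrete level, regarding $\A_{\kappa,h}=\B_{0,h}+\D_{\kappa,h}+\K_\kappa$ as a perturbation of the $\tilde V_h^k$-elliptic principal part $\B_{0,h}$ by the two lower-order forms $\D_{\kappa,h}$ and $\K_\kappa$, both continuous in the weaker $W$-norm. Since $\kappa^2$ is not a Dirichlet eigenvalue, Remark \ref{rk:Astar_cont} guarantees that $\A_\kappa$ and $\A_\kappa^{*}$ are isomorphisms of $V$ onto $V'$. Hence, for every fixed $\hat w_h=(w_h,\eta_h)\in V_h^k$ there exists $\hat v\in V$ with
\[
\A_\kappa(\hat w_h,\hat v)=\norma{\hat w_h}_V^2,\qquad \norma{\hat v}_V\apprle\norma{\hat w_h}_V,
\]
obtained by taking $\hat v=(\A_\kappa^{*})^{-1}R\hat w_h$, with $R$ the Riesz isomorphism of $V$. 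The whole difficulty is then to produce a discrete test function that nearly realizes this pairing for $\A_{\kappa,h}$.

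First I would feed this $\hat v$ into Lemma \ref{Lemma:ourdualpb}, obtaining $\hat v_h=(v_h,\mu_h)\in V_h^k$ with $\norma{\hat v_h}_V\apprle\norma{\hat v}_V\apprle\norma{\hat w_h}_V$ by \eqref{eq:secondadv_a}, together with the $O(h)$ bounds \eqref{eq:secondadv_b}--\eqref{eq:fsecondadv_c} in the weaker $H^{-\nicefrac{3}{2}}(\Gamma)$- and $L^2(\Omega)$-norms. By the defining identity of Lemma \ref{Lemma:ourdualpb},
\[
\B_{0,h}(\hat w_h,\hat v_h)=\B_0(\hat w_h,\hat v)+\B_0\big((0,\bar\eta_h),\hat v_h-\hat v\big),
\]
so that, adding and subtracting the continuous forms $\D_\kappa$ and $\K_\kappa$,
\[
\A_{\kappa,h}(\hat w_h,\hat v_h)=\A_\kappa(\hat w_h,\hat v)+\mathcal{E},
\]
where the error $\mathcal{E}$ gathers the mean-value correction $\B_0((0,\bar\eta_h),\hat v_h-\hat v)$, the consistency gap $\D_{\kappa,h}(\hat w_h,\hat v_h)-\D_\kappa(\hat w_h,\hat v)$ and the double-layer gap $\K_\kappa(\hat w_h,\hat v_h-\hat v)$.

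The core of the argument is the bound $|\mathcal{E}|\apprle h^{\nicefrac{1}{2}}\,\norma{\hat w_h}_V^2$. The consistency gap I would split as $[\D_{\kappa,h}-\D_\kappa](\hat w_h,\hat v_h)+\D_\kappa(\hat w_h,\hat v_h-\hat v)$: the first term is $O(h)$ by Lemma \ref{Lemma:BBwithinter} with $s=0$ (using $\norma{w_h}_{H^1(\Omega)}\apprle\norma{\hat w_h}_V$ and $\norma{\hat v_h}_W\apprle\norma{\hat w_h}_V$), and the second by the $W$-continuity of $\D_\kappa$ recalled in Remark \ref{rk:Dk} together with $\norma{\hat v_h-\hat v}_W\apprle h\norma{\hat w_h}_V$ from \eqref{eq:secondadv_b}--\eqref{eq:fsecondadv_c}. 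For the double-layer gap, the smoothing of $\text{K}_\kappa$ on the smooth contour $\Gamma$ yields $\norma{\text{K}_\kappa w_h}_{H^{\nicefrac{3}{2}}(\Gamma)}\apprle\norma{\hat w_h}_V$, which, paired with $\mu_h-\mu$ in the $H^{-\nicefrac{3}{2}}(\Gamma)\times H^{\nicefrac{3}{2}}(\Gamma)$ duality and controlled by \eqref{eq:secondadv_b}, is again $O(h)$. Finally, the mean-value correction splits, by the definition of $\B_0$, into $2\langle\mu_h-\mu,\text{V}_0\bar\eta_h\rangle_\Gamma-\langle\bar\eta_h,v_h-v\rangle_\Gamma$; the first summand is $O(h)$ since $\text{V}_0\bar\eta_h$ is smooth (Remark \ref{rk:reg_V0}) and $\mu_h-\mu$ is controlled in $H^{-\nicefrac{3}{2}}(\Gamma)$, while the second is controlled by a multiplicative trace inequality together with \eqref{eq:fsecondadv_c}, giving at worst $O(h^{\nicefrac{1}{2}})$.

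Combining these estimates, $\A_{\kappa,h}(\hat w_h,\hat v_h)\ge\norma{\hat w_h}_V^2-|\mathcal{E}|\ge(1-Ch^{\nicefrac{1}{2}})\norma{\hat w_h}_V^2\ge\tfrac12\norma{\hat w_h}_V^2$ for $h$ small enough; dividing by $\norma{\hat v_h}_V\apprle\norma{\hat w_h}_V$ then yields the stated inf-sup. I expect the main obstacle to be the rigorous control of the consistency error $\mathcal{E}$: because this is a generalized, non-conforming Galerkin scheme, $\A_{\kappa,h}\neq\A_\kappa$ on $V_h^k$, and the gain of a positive power of $h$ rests entirely on measuring every perturbation in the weaker $W$- and $H^{-\nicefrac{3}{2}}(\Gamma)$-norms supplied by Lemmas \ref{Lemma:BBwithinter} and \ref{Lemma:ourdualpb}. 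This is precisely where the smoothness of $\Gamma$ becomes indispensable, through the regularizing properties of $\text{V}_\kappa-\text{V}_0$, $\text{K}_\kappa$ and $\text{V}_0$ (Lemma \ref{Lemma:Vcompa} and Remark \ref{rk:reg_V0}): without this extra regularity the perturbations would remain $O(1)$ and could not be absorbed for any $h$.
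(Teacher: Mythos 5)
Your proposal follows the same route as the paper's proof: the continuous adjoint solution $\hat v = \A_{\kappa}^{*-1}J\hat w_h$, the discrete companion $\hat v_h$ supplied by Lemma \ref{Lemma:ourdualpb}, and an error decomposition that is algebraically identical to the one the paper estimates in \eqref{eq:JN4.14_bis}, \eqref{eq:Dk} and \eqref{eq:stima_Akh_bis}; your treatment of the mean-value correction, of $(\D_{\kappa,h}-\D_\kappa)(\hat w_h,\hat v_h)$ and of the double-layer gap matches the paper's (your multiplicative trace inequality even gives $h^{\nicefrac{1}{2}}$ where the paper's interpolation argument gives $h^{\nicefrac{1}{2}-\varepsilon}$). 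However, one step fails as written: you bound $\D_\kappa(\hat w_h,\hat v_h-\hat v)$ by the $W$-continuity of $\D_\kappa$ (Remark \ref{rk:Dk}) ``together with $\norma{\hat v_h - \hat v}_W \apprle h\norma{\hat w_h}_V$ from \eqref{eq:secondadv_b}--\eqref{eq:fsecondadv_c}''. That estimate is not available. The boundary component of the $W$-norm is $H^{-\nicefrac{1}{2}}(\Gamma)$, whereas \eqref{eq:secondadv_b} controls $\mu_h-\mu$ only in the weaker $H^{-\nicefrac{3}{2}}(\Gamma)$-norm; in $H^{-\nicefrac{1}{2}}(\Gamma)$ Lemma \ref{Lemma:ourdualpb} yields only $\norma{\mu_h-\mu}_{H^{-\nicefrac{1}{2}}(\Gamma)}\apprle\norma{\hat v}_V$, i.e.\ $O(1)$. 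This is intrinsic to the duality argument: the factor $h$ is gained precisely by dropping one order in the norm, so no $O(h)$ bound in $H^{-\nicefrac{1}{2}}(\Gamma)$ can be expected. With only the $O(1)$ bound, this perturbation term stays $O(1)$ and cannot be absorbed into $\norma{\hat w_h}_V^2$ for any $h$.

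The repair is local and is exactly what the paper does in \eqref{eq:stima_Akh_bis}: write the term out explicitly, $\D_\kappa(\hat w_h,\hat v_h-\hat v) = -\kappa^2 m(w_h,v_h-v) + 2\langle \mu_h-\mu,(\text{V}_\kappa-\text{V}_0)\eta_h\rangle_\Gamma$, bound the first summand via \eqref{eq:fsecondadv_c}, and pair the second in the $H^{-\nicefrac{3}{2}}(\Gamma)\times H^{\nicefrac{3}{2}}(\Gamma)$ duality, invoking the full strength of Lemma \ref{Lemma:Vcompa} --- continuity of $\text{V}_\kappa-\text{V}_0 : H^{-\nicefrac{1}{2}}(\Gamma)\to H^{\nicefrac{5}{2}}(\Gamma)\hookrightarrow H^{\nicefrac{3}{2}}(\Gamma)$, a gain of two orders, not the single order implicit in the $W$-continuity of Remark \ref{rk:Dk} --- together with \eqref{eq:secondadv_b}. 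This is the same asymmetric pairing you already use for the double-layer gap and for the term $II$. A minor related point: the mapping property $\text{K}_\kappa : H^{\nicefrac{1}{2}}(\Gamma)\to H^{\nicefrac{3}{2}}(\Gamma)$ you invoke is not stated in the paper for $\text{K}_\kappa$ itself; it must be assembled from the continuity of $\text{K}_0$ (cited from Johnson--N\'ed\'elec) and Lemma \ref{Lemma:Vcompa} for $\text{K}_\kappa-\text{K}_0$, which is how the paper proceeds by keeping the two contributions separate.
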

\begin{proof}
Given $\hat w_h \in V_h^k$, let $\hat v := \A_{\kappa}^{*-1} J \hat w_h \in V$ where $J: V \to V'$ denotes the canonical continuous map $(J \hat w)(\hat z) := (\hat w,\hat z)_{V}$.
Therefore we can write: 
\begin{align} \nonumber
\A_{\kappa}(\hat z, \hat v) & = \A_{\kappa}(\hat z, \A_{\kappa}^{*-1} J \hat w_h) = (\A_{\kappa}\hat z)(\A_{\kappa}^{*-1} J \hat w_h) = (\A_{\kappa}^*\A_{\kappa}^{*-1} J \hat w_h)(\hat z) \\ & = (J \hat w_h)(\hat z) = (\hat w_h,\hat z)_{V}, \qquad \forall \, \hat z \in V. \label{eq:infsup1}
\end{align}
Moreover, according to the continuity of $\mathcal{A}_{\kappa}^{*-1}$ (see Remark \ref{rk:Astar_cont}) and of $J$, we obtain 
\begin{equation} \label{eq:infsup2}
\norma{\hat v}_V \apprle \norma{\hat w_h}_V.
\end{equation}
Now, by virtue of Lemma \ref{Lemma:ourdualpb}, writing $\hat v = (v,\mu) \in V$, there exists $\hat v_h = (v_h,\mu_h) \in V_h^k$  such that 
\begin{equation}\label{eq:Boh_utile2}
\B_{0,h}(\hat w_h,\hat v_h) = \B_0(\hat w_h,\hat v) + \B_0((0,\bar \eta_h),\hat v_h - \hat v) \quad \forall \, \hat w_h = (w_h,\eta_h) \in V_h^k
\end{equation}
where $\bar \eta_h = \nicefrac{\langle \eta_h,1 \rangle}{\langle 1,1 \rangle}$ and such that \eqref{eq:secondadv_a}-\eqref{eq:fsecondadv_c} hold.
Proceeding as in Theorem 5.2 of \cite{GaticaMeddahi2020}, and recalling the
definitions of $\A_{0,h}$ and $\A_{\kappa,h}$ and of $\D_{\kappa,h}$ and $\D_{\kappa}$ (see assumption \ref{H3.a} and Remark \ref{rk:Dk}), we rewrite $\A_{\kappa,h}$ as follows:
\begin{align}
\A_{\kappa,h} &= \A_{0,h} + (\A_k - \A_0) + (\A_0 - \A_{0,h}) + (\A_{\kappa,h} - \A_{\kappa}) \nonumber\\
& =\A_{0,h} + (\A_k - \A_0) +  (\B_0 - \B_{0,h}) +  (\B_{\kappa,h} -\B_{\kappa})\nonumber\\
& =  \A_{0,h} + (\A_k - \A_0) + (\D_{\kappa,h}- \D_{\kappa}).  \label{eq:infsup4}
\end{align}
Using Lemma \ref{Lemma:BBwithinter} with $s=0$, we have
\begin{equation}\label{eq:Dk}
\abs*{(\D_{\kappa,h} - \D_\kappa)(\hat w_h, \hat v_h)}
 \apprle h \norma{\hat w_h}_V \norma{\hat v_h}_W \apprle h \norma{\hat w_h}_V \norma{\hat v_h}_V. 
\end{equation}
%
Recalling \eqref{definition_A_kappa}--\eqref{definition_K_kappa} and \eqref{model_problem_galerkin}, and using \eqref{eq:Boh_utile2}, we get:
\begin{align} \nonumber
\A_{0,h}(\hat w_{h}, \hat v_{h}) & = \B_{0,h}(\hat w_{h}, \hat v_{h}) + \K_0(\hat w_{h}, \hat v_{h}) = \B_0(\hat w_{h}, \hat v)+ \B_0((0,\bar \eta_h),\hat v_h - \hat v)
+ \K_0(\hat w_{h},\hat v_{h})\nonumber\\ 
& = \B_0(\hat w_{h}, \hat v)+ \K_0(\hat w_{h},\hat v) + \B_0((0,\bar \eta_h),\hat v_h - \hat v)
+ \K_0(\hat w_{h},\hat v_{h}) - \K_0(\hat w_{h},\hat v) \nonumber\\
& =  \A_0(\hat w_{h}, \hat v) + \B_0((0,\bar \eta_h),\hat v_h - \hat v) - 2 \langle \mu_{h} - \mu,\text{K}_0 w_h \rangle_{\Gamma}.  \label{eq:JN4.14_bis}  
\end{align}
By applying the Hölder inequality and \eqref{eq:secondadv_b}, we can estimate the last term in \eqref{eq:JN4.14_bis} as follows
\begin{align} \nonumber
\abs*{\langle \mu_{h} - \mu,\text{K}_0  w_h \rangle_{H^{-\nicefrac{3}{2}}(\Gamma) \times H^{\nicefrac{3}{2}}(\Gamma)}} & \apprle  \norma{\mu_{h} - \mu}_{H^{-\nicefrac{3}{2}}(\Gamma)} \norma{\text{K}_0  w_h}_{H^{\nicefrac{3}{2}}(\Gamma)} \\ & \apprle h \norma{\hat v}_V \norma{\text{K}_0  w_h}_{H^{\nicefrac{3}{2}}(\Gamma)}. \label{eq:JN4.15}
\end{align}
Then, using the continuity of $\text{K}_0 : H^{\nicefrac{1}{2}}(\Gamma) \to H^{\nicefrac{3}{2}}(\Gamma)$ (see \cite{JohnsonNedelec1980}, formula (2.11)) and the trace theorem, we obtain
\begin{equation*} %
\norm{\text{K}_0 w_h}_{H^{\nicefrac{3}{2}}(\Gamma)} \apprle \norm{ w_h}_{H^{\nicefrac{1}{2}}(\Gamma)} \apprle \norma{w_h}_{H^1(\Omega)} \le \norma{\hat w_{h}}_V,
\end{equation*}
and, hence, combining this latter with \eqref{eq:JN4.15}, it follows that
\begin{equation} \label{eq:JN4.16_bis}
\abs*{\langle \mu_{h} - \mu,\text{K}_0 w_h \rangle_{H^{-\nicefrac{3}{2}}(\Gamma) \times H^{\nicefrac{3}{2}}(\Gamma)}} \apprle h \norma{\hat v}_V \norma{\hat w_{h}}_V.
\end{equation}
Then, from \eqref{eq:JN4.14_bis}  and \eqref{eq:JN4.16_bis}, we obtain
\begin{align}\label{eq:stima_A0h}
\A_{0,h}(\hat w_h, \hat v_h) & \apprge  \A_0(\hat w_h, \hat v) +  \B_0((0,\bar \eta_h),\hat v_h - \hat v) - h \norma{\hat v}_V \norma{\hat w_h}_V.
\end{align}
By explicitly writing
\begin{align*}
\B_0((0,\bar \eta_h),\hat v_h - \hat v) &= -\langle \bar \eta_h, v_h-v \rangle_\Gamma + 2 \langle \mu_h - \mu, \text{V}_0 \bar \eta_h \rangle_\Gamma \\
& = -\bar \eta_h \langle 1,  v_h-v \rangle_\Gamma + 2\bar \eta_h \langle \mu_h - \mu, \text{V}_0 1 \rangle_\Gamma=: I + II,
\end{align*}
and using the Cauchy-Schwarz inequality to bound $\abs*{\bar \eta_h} \apprle \norma{\eta_h}_{H^{-\nicefrac{1}{2}}}$, 
we can estimate $II$ by using Hölder inequality, the continuity of $\text{V}_0 : H^{\nicefrac{1}{2}}(\Gamma) \to H^{\nicefrac{3}{2}}(\Gamma)$ (see Remark \ref{rk:reg_V0}) and \eqref{eq:secondadv_b}: 
\begin{align*}
\abs*{II} &\apprle \norma{\eta_h}_{H^{-\nicefrac{1}{2}}(\Gamma)}\norma{\mu_h - \mu}_{H^{-\nicefrac{3}{2}}(\Gamma)} \norma{V_0 1 }_{H^{\nicefrac{3}{2}}(\Gamma)} 
 \apprle h \norma{\hat v}_V \norma{\hat w_h}_V.
\end{align*}
To estimate the term $I$, we use Hölder inequality and the trace theorem (see e.g. Eq. (2.1) of \cite{Costabel1988}) and we obtain, for $0 < \varepsilon < \nicefrac{1}{2}$:
\begin{align*}
\abs*{I} &
\apprle \norma{\eta_h}_{H^{-\nicefrac{1}{2}}(\Gamma)} \abs*{\langle 1,  v_h-v \rangle_\Gamma} 
\apprle \norma{\hat w_h}_{V} \abs*{\langle 1, v_h-v \rangle_\Gamma} \apprle \norma{\hat w_h}_{V} \norma{v_h-v}_{H^\varepsilon(\Gamma)} \\
&\apprle \norma{\hat w_h}_{V}  \norma{v_h-v}_{H^{\nicefrac{1}{2}+\varepsilon}(\Omega)}.
\end{align*}
Then, using the characterization of the fractional Sobolev space $H^{\nicefrac{1}{2}+\varepsilon}(\Omega)$ as the real interpolation between 
$L^2(\Omega)$ and $H^1(\Omega)$, by a standard result concerning the norm of real interpolation spaces (see Prop. 2.3 of \cite{LionsMagenes1968}),	 
it holds that $\norma{v_h-v}_{H^{\nicefrac{1}{2}+\varepsilon}(\Omega)}\leq \norma{v_h-v}^{\nicefrac{1}{2}-\varepsilon}_{L^{2}(\Omega)}\norma{v_h-v}^{\nicefrac{1}{2}+\varepsilon}_{H^1(\Omega)}$. Hence, by applying \eqref{eq:secondadv_a} and \eqref{eq:fsecondadv_c}, we finally get:
\begin{align*}
\abs*{I} &\apprle h^{\nicefrac{1}{2}-\varepsilon}\norma{\hat v}_V^{\nicefrac{1}{2}-\varepsilon}\norma{v_h-v}^{\nicefrac{1}{2}+\varepsilon}_{H^1(\Omega)} \norma{\hat w_h}_{V}  \apprle h^{\nicefrac{1}{2}-\varepsilon}\norma{\hat v}_V^{\nicefrac{1}{2}-\varepsilon}(\norma{v}_{H^1(\Omega)} + \norma{v_h}_{H^1(\Omega)})^{\nicefrac{1}{2}+\varepsilon} \norma{\hat w_h}_{V} \\
& \apprle h^{\nicefrac{1}{2}-\varepsilon}\norma{\hat v}_V^{\nicefrac{1}{2}-\varepsilon}(\norma{\hat v}_{V} + \norma{\hat v_h}_{V})^{\nicefrac{1}{2}+\varepsilon} \norma{\hat w_h}_{V} \apprle h^{\nicefrac{1}{2}-\varepsilon}\norma{\hat v}_V^{\nicefrac{1}{2}-\varepsilon}\norma{\hat v}_{V}^{\nicefrac{1}{2}+\varepsilon} \norma{\hat w_h}_{V} = h^{\nicefrac{1}{2}-\varepsilon}\norma{\hat v}_V \norma{\hat w_h}_{V}.
\end{align*}
Combining \eqref{eq:stima_A0h} with the bounds for $I$ and $II$, we can write
\begin{align}\label{eq:stima_A0h_bis}
\A_{0,h}(\hat w_h, \hat v_h) & \apprge  \A_0(\hat w_h, \hat v) - h^{\nicefrac{1}{2}-\varepsilon}\norma{\hat v}_V \norma{\hat w_h}_{V}
- h \norma{\hat v}_V \norma{\hat w_h}_V \nonumber\\
&\apprge  \A_0(\hat w_h, \hat v) - h^{\nicefrac{1}{2}-\varepsilon}\norma{\hat v}_V \norma{\hat w_h}_{V}.
\end{align}
Starting from \eqref{eq:infsup4}, using \eqref{eq:Dk} and \eqref{eq:stima_A0h_bis}, it follows  
\begin{align}\label{eq:stima_Akh}
\A_{\kappa,h}(\hat w_h,\hat v_h) & \apprge \A_{0,h}(\hat w_h,\hat v_h) + (\A_k - \A_0)(\hat w_h,\hat v_h) - h \norma{\hat w_h}_V \norma{\hat v_h}_V \nonumber\\ &
\apprge \A_0(\hat w_h,\hat v) - h^{\nicefrac{1}{2}-\varepsilon} \norma{\hat w_h}_V \norma{\hat v}_V + (\A_k - \A_0)(\hat w_h,\hat v_h) \nonumber\\
& = \A_{\kappa}(\hat w_h,\hat v) - h^{\nicefrac{1}{2}-\varepsilon} \norma{\hat v}_V \norma{\hat w_h}_V +  (\A_{\kappa} - \A_0)(\hat w_h,\hat v_h) + (\A_0 - \A_{\kappa})(\hat w_h,\hat v)
\nonumber\\ 
& = \norma{\hat w_h}_V^2 - h^{\nicefrac{1}{2}-\varepsilon} \norma{\hat v}_V \norma{\hat w_h}_V + (\A_{\kappa} - \A_0)(\hat w_h, \hat v_h - \hat v) 
\end{align}
having used \eqref{eq:infsup1} in the last equality.

Concerning the last term in \eqref{eq:stima_Akh}, we explicitly write:
\begin{equation*}
(\A_{\kappa} - \A_0)(\hat w_h, \hat v_h - \hat v)= 
-\kappa^2 m(w_h,v_h - v) + 2 \langle \mu_h - \mu, (\text{V}_{\kappa} - \text{V}_0)\eta_h - (\text{K}_{\kappa} - \text{K}_0) w_h \rangle
\end{equation*}
and, by using  
the continuity of $m$, the Hölder inequality and the continuity of
$\text{V}_{\kappa} - \text{V}_0 : H^{-\nicefrac{1}{2}}(\Gamma) \to H^{\nicefrac{3}{2}}(\Gamma)$ and of $\text{K}_{\kappa} - \text{K}_0 : H^{\nicefrac{1}{2}}(\Gamma) \to H^{\nicefrac{3}{2}}(\Gamma)$ (see Lemma \ref{Lemma:Vcompa}),
\begin{align}\label{eq:stima_Akh_bis}
\abs*{(\A_{\kappa} - \A_0)(\hat w_h, \hat v_h - \hat v)} & \apprle 
\norma{\hat w_h}_V \norma{v_h - v}_{L^2(\Omega)}+  \norma{\hat w_h}_V\norma{\mu_h - \mu}_{H^{-\nicefrac{3}{2}}(\Gamma)} 
\apprle h \norma{\hat w_h}_V \norma{\hat v_h}_V,
\end{align}
having used, in the last bound, \eqref{eq:secondadv_b} and \eqref{eq:fsecondadv_c}.
Finally, combining \eqref{eq:stima_Akh} with \eqref{eq:stima_Akh_bis} and \eqref{eq:infsup2} we get
\begin{equation*}
\A_{\kappa,h}(\hat w_h,\hat v_h) \apprge 
\norma{\hat w_h}_V^2 - h^{\nicefrac{1}{2}-\varepsilon} \norma{\hat w_h}_V \norma{\hat v}_V \apprge (1-h^{\nicefrac{1}{2}-\varepsilon})\norma{\hat w_h}_V^2 
\end{equation*}
from which, for $h$ small enough, the claim follows.
\end{proof}
We conclude this section by proving the convergence error estimate for Problem \eqref{model_problem_galerkin}.
\begin{Theorem}
\label{Theorem:JN1}
Assume that $\kappa^2$ is not an eigenvalue of the Laplacian in $\Omega$ endowed with a Dirichlet boundary condition on $\Gamma$.
Furthermore, assume that there exist $k \in \mathbf{N}$ such that for all $1 \le s \le k$ and $\kappa > 0$, Assumptions \ref{H1.a}-\ref{H1.c}, \ref{H2.a}, \ref{H2.b}, \ref{H3.a}-\ref{H3.c} hold, and  $\sigma : L^2(\Omega) \to \mathbf{R}^+$ such that  
\begin{enumerate}[label=(\subscript{H4}{{\alph*}})]
    {\setlength\itemindent{50pt} \item\label{H4.a} $\left|\mathcal{L}_{f}(\hat{v}_{h}) - \mathcal{L}_{f,h}(\hat{v}_{h})\right| \apprle h^{s}\left\|\hat{v}_{h}\right\|_{V} \, \sigma(f) \qquad \forall \, \hat{v}_{h} \in V_{h}^{k}$}.
\end{enumerate}
Then, for $h$ small enough, Problem \eqref{model_problem_galerkin} admits a unique solution $\hat u_h \in V_h^k$ and if $\hat u$, solution of Problem \eqref{model_problem_operator}, satisfies $\hat u \in H^{s+1}(\Omega) \times H^{s-\nicefrac{1}{2}}(\Gamma)$, it holds
\begin{equation*}
\norma{\hat u - \hat u_h}_V \apprle h^s \left( \norma{u}_{H^{s+1}(\Omega)} + \sigma(f)\right).
\end{equation*}
\end{Theorem}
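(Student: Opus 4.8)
The plan is to combine the discrete inf-sup condition of Theorem \ref{th:infsup} with a Strang-type (generalized Galerkin) argument. First I would dispatch existence and uniqueness: for $h$ small enough, Theorem \ref{th:infsup} guarantees that the only $\hat w_h \in V_h^k$ with $\A_{\kappa,h}(\hat w_h, \hat v_h) = 0$ for all $\hat v_h \in V_h^k$ is $\hat w_h = 0$. Since $V_h^k$ is finite dimensional, injectivity of the associated square linear system forces invertibility, yielding a unique $\hat u_h$ solving \eqref{model_problem_galerkin}.

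For the error bound I would introduce the interpolant $\hat u_h^I = (u_h^I, \lambda_h^I) \in V_h^k$ of $\hat u = (u,\lambda)$ furnished by \eqref{int_property} and split, via the triangle inequality,
\begin{equation*}
\norma{\hat u - \hat u_h}_V \le \norma{\hat u - \hat u_h^I}_V + \norma{\hat u_h^I - \hat u_h}_V.
\end{equation*}
The first term is controlled directly by \eqref{int_property}, giving $\norma{\hat u - \hat u_h^I}_V \apprle h^s(\norma{u}_{H^{s+1}(\Omega)} + \norma{\lambda}_{H^{s-\nicefrac{1}{2}}(\Gamma)})$; since $\lambda = \nicefrac{\partial u}{\partial \mathbf{n}}$, the normal-derivative trace theorem bounds $\norma{\lambda}_{H^{s-\nicefrac{1}{2}}(\Gamma)} \apprle \norma{u}_{H^{s+1}(\Omega)}$, so this contribution is $\apprle h^s \norma{u}_{H^{s+1}(\Omega)}$.

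The heart of the argument is the second term. Setting $\hat e_h := \hat u_h^I - \hat u_h \in V_h^k$, Theorem \ref{th:infsup} gives $\norma{\hat e_h}_V \apprle \sup_{\hat v_h \ne 0} \A_{\kappa,h}(\hat e_h, \hat v_h)/\norma{\hat v_h}_V$. Using that $\hat u_h$ solves \eqref{model_problem_galerkin} and that $\hat u$ solves \eqref{model_problem_operator} (valid for the admissible test function $\hat v_h \in V_h^k \subset V$), I would decompose, in Strang fashion,
\begin{align*}
\A_{\kappa,h}(\hat e_h, \hat v_h) &= \A_{\kappa,h}(\hat u_h^I, \hat v_h) - \mathcal{L}_{f,h}(\hat v_h) \\
&= \big[\A_{\kappa,h}(\hat u_h^I, \hat v_h) - \A_\kappa(\hat u_h^I, \hat v_h)\big] + \A_\kappa(\hat u_h^I - \hat u, \hat v_h) + \big[\mathcal{L}_{f}(\hat v_h) - \mathcal{L}_{f,h}(\hat v_h)\big].
\end{align*}
Because $\A_{\kappa,h} = \B_{\kappa,h} + \K_\kappa$ and $\A_\kappa = \B_\kappa + \K_\kappa$ share the \emph{same} exactly-integrated operator $\K_\kappa$, the first bracket reduces to $\B_\kappa(\hat u_h^I, \hat v_h) - \B_{\kappa,h}(\hat u_h^I, \hat v_h)$, which Lemma \ref{Lemma:Bwithinter} bounds by $\apprle h^s \norma{u}_{H^{s+1}(\Omega)} \norma{\hat v_h}_V$. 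The middle term is controlled by the continuity of $\A_\kappa$ together with \eqref{int_property} and the trace estimate above, giving $\apprle h^s \norma{u}_{H^{s+1}(\Omega)} \norma{\hat v_h}_V$; the last bracket is precisely Assumption \ref{H4.a}, bounded by $\apprle h^s \sigma(f) \norma{\hat v_h}_V$. Dividing by $\norma{\hat v_h}_V$, taking the supremum, and recombining with the first term of the triangle inequality yields the claimed estimate.

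I expect the main obstacle to be the bookkeeping in the Strang decomposition—specifically, recognizing that the non-consistent part lives only in $\B_{\kappa,h} - \B_\kappa$, so that $\K_\kappa$ cancels and Lemma \ref{Lemma:Bwithinter} applies verbatim to $\hat u_h^I$, and correctly absorbing the $\lambda$-component of the interpolation error into $\norma{u}_{H^{s+1}(\Omega)}$ through the normal-derivative trace bound. Everything else is a routine assembly of continuity, the approximation property \eqref{int_property}, the consistency Lemma \ref{Lemma:Bwithinter}, and Assumption \ref{H4.a}.
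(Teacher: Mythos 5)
Your proposal is correct and follows essentially the same route as the paper: inf-sup from Theorem \ref{th:infsup} for existence, uniqueness and stability of the error $\hat u_h^I - \hat u_h$, the same Strang-type decomposition into the consistency term $(\B_\kappa - \B_{\kappa,h})(\hat u_h^I,\cdot)$ (after cancelling $\K_\kappa$), the Galerkin term $\A_\kappa(\hat u - \hat u_h^I,\cdot)$, and the data term handled by \ref{H4.a}, concluded via \eqref{int_property} and the triangle inequality. Your only addition is making explicit the normal-derivative trace bound $\norma{\lambda}_{H^{s-\nicefrac{1}{2}}(\Gamma)} \apprle \norma{u}_{H^{s+1}(\Omega)}$ needed to state the final estimate in terms of $\norma{u}_{H^{s+1}(\Omega)}$ alone, a step the paper leaves implicit.
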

\begin{proof}
Existence and uniqueness of $\hat u_h$ follows from the discrete inf-sup condition of Theorem \ref{th:infsup}.
Let $\hat u_h^I \in V_h^k$ be the interpolant of $\hat u$. By virtue of Theorem \ref{th:infsup} there exists $\hat v_h^* = (v_h^*,\mu_h^*) \in V_h^k$ such that
\begin{equation*}
\norma{\hat u_h - \hat u_h^I}_V \apprle \frac{\A_{\kappa,h}(\hat u_h - \hat u_h^I , \hat v_h^*)}{\norma{\hat v_h^*}_V}.
\end{equation*}
Since $\hat u$ and $\hat u_h$ are solution of \eqref{model_problem_operator} and \eqref{model_problem_galerkin} respectively, we have
\begin{align*}
\norma{\hat u_h - \hat u_h^I}_V \norma{\hat v_h^*}_V & \apprle \A_{\kappa,h}(\hat u_h - \hat u_h^I , \hat v^*_h) = \A_{\kappa,h}(\hat u_h, \hat v^*_h) - \A_{\kappa,h}(\hat u_h^I,\hat v^*_h) 
 \\ & = \mathcal{L}_{f,h}(\hat v^*_h) - \A_{\kappa,h}(\hat u_h^I , \hat v^*_h) +  
 [\A_{\kappa}(\hat u,\hat v^*_h) - \mathcal{L}_{f}(\hat v^*_h)] 
 \\ & = [\mathcal{L}_{f,h}(\hat v^*_h)- \mathcal{L}_{f}(\hat v^*_h)] + \A_{\kappa}(\hat u - \hat u_h^I,\hat v^*_h) + [\A_{\kappa}(\hat u_h^I,\hat v^*_h) - \A_{\kappa,h}(\hat u_h^I , \hat v^*_h)]\\
 &= [\mathcal{L}_{f,h}(\hat v^*_h)- \mathcal{L}_{f}(\hat v^*_h)] + \A_{\kappa}(\hat u - \hat u_h^I,\hat v^*_h) + [(\B_{\kappa} - \B_{\kappa,h})(\hat u_h^I, \hat v^*_h)].
\end{align*}
Then, by using Assumption \ref{H4.a}, the continuity of $\A_{\kappa}$ and Lemma \ref{Lemma:Bwithinter}, we obtain
\begin{align*}
\norma{\hat u_h - \hat u_h^I}_V \norma{\hat v_h^*}_V & \apprle h^{s}\left\|\hat v^*_{h}\right\|_{V} \, \sigma(f) + \norma{\hat u - \hat u_h^I}_V \norma{\hat v_h^*}_V +  h^s \norma{u}_{H^{s+1}(\Omega)} \norma{\hat v_h^*}_V,
\end{align*}
from which it easily follows
\begin{equation} \label{eq:JN1.2}
\norma{\hat u - \hat u_h}_V \le \norma{\hat u -\hat u_h^I}_V + \norma{\hat u_h - \hat u_h^I}_V \apprle \norma{\hat u - \hat u_h^I}_V + h^s \norma{u}_{H^{s+1}(\Omega)} + h^s \sigma(f).
\end{equation}
Finally, combining \eqref{int_property} and \eqref{eq:JN1.2} we obtain the thesis.
\end{proof}


\section{The discrete scheme}\label{sec_5_discrete_scheme}
In this section we introduce the discrete CVEM-BEM scheme for the coupling procedure (\ref{model_problem_variational}). We start by briefly describing the main tools of the VEM; we refer the interested reader to \cite{AhmadAlsaediBrezziMariniRusso2013, BeiraoBrezziCangianiManziniMariniRusso2013, BeiraoRussoVacca2019} for a deeper presentation. In what follows, we denote by $\mathbf{V}_{1},\ldots,\mathbf{V}_{n_{\text{\tiny{E}}}}$ the $n_{\text{\tiny{E}}}$ vertices of 
an element $E\in\mathcal{T}_{h}$ and by $e_{1},\ldots,e_{n_{\text{\tiny{E}}}}$ the edges of its boundary $\partial E$. 
For simplicity of presentation, we assume that at most one edge is curved while the remaining ones are straight. We identify the curved edge by $e_{1}\subset\partial\Omega$, to which we associate a regular invertible parametrization $\gamma_{E}:I_{E}\rightarrow e_{1}$, where $I_{E}\subset\mathbf{R}$ is a closed interval. Furthermore, we denote by $\mathbf{V}_{E}$, $h_{E}$ and $|E|$ the mass center, the diameter and the Lebesgue measure of $E$, respectively. Additionally, we call $N_{\text{\tiny{V}}}$ and $N_{\text{{e}}}$ the numbers of total vertices and edges of $\mathcal{T}_{h}$, respectively.

\

In what follows we will show that all the assumptions, used to obtain the theoretical results in Section \ref{sec:DVF}, are satisfied.

\subsection{The discrete spaces $Q_{h}^{k}$, $X_{h}^{k}$ and $\widetilde{X}_{h}^{k}$: validity of Assumptions \ref{H1.a}--\ref{H1.c}}
\noindent
In order to describe the discrete space $Q_{h}^{k}$, introduced in Section \ref{sec:DVF}
in a generic setting, 
we preliminarily consider for each $E\in\mathcal{T}_{h}$ the following local finite dimensional \emph{augmented} virtual space $\widetilde{Q}^{k}_{h}(E)$ and the local \emph{enhanced} virtual space $Q^{k}_{h}(E)$ defined respectively, 
\begin{equation*}\label{local_VEM_space}
\widetilde{Q}^{k}_{h}(E):=\left\{v_{h}\in H^{1}(E)\cap C^{0}(E) \ : \ \Delta v_{h}\in P_{k}(E),  \ v_{h}\,\raisebox{-.5em}{$\vert_{e_{1}}$}\in\widetilde{P}_{k}(e_{1}), \ v_{h}\,\raisebox{-.5em}{$\vert_{e_{i}}$}\in P_{k}(e_{i}) \ \forall\, i=2,\ldots,n_{\text{\tiny{E}}}\right\}
\end{equation*}
and 
\begin{equation*}\label{local_enhanced_VEM_space}
Q^{k}_{h}(E):=\left\{v_{h}\in\widetilde{Q}^{k}_{h}(E) \ : \ m^{\text{\tiny{E}}}\left(\Pi_{k}^{\nabla,E}v_{h},q\right)=m^{\text{\tiny{E}}}\left(v_{h},q\right) \ \forall\, q\in P_{k}(E)/P_{k-2}(E)\right\},
\end{equation*}
where $\widetilde{P}_{k}(e_{1}):=\left\{\widetilde{q}:=q\circ\gamma_{E}^{-1} \ : \ q\in P_{k}(I_E)\right\}$ and  $P_{k}(E)/P_{k-2}(E)$ stands for the space of all polynomials of degree $k$ on $E$ that are $L^{2}$-orthogonal to all polynomials of degree $k-2$ on $E$.

For details on such spaces, we refer the reader 
to \cite{BeiraoRussoVacca2019} (see Remark 2.6)   and to \cite{AhmadAlsaediBrezziMariniRusso2013}
(see Section 3).\\ 

\noindent
It is possible to prove (see Proposition 2 in  \cite{AhmadAlsaediBrezziMariniRusso2013} and Proposition 2.2 in \cite{BeiraoRussoVacca2019})
that the dimension of $Q^{k}_{h}(E)$ is
\begin{equation*}\label{dim_local_VEM_space}
n:=\dim(Q^{k}_{h}(E))=kn_{\text{\tiny{E}}}+\frac{k(k-1)}{2}
\end{equation*}
and that a generic element $v_{h}$ of $Q^{k}_{h}(E)$ is uniquely determined by the following $n$ conditions (see \cite{BeiraoRussoVacca2019}, Proposition 2.2):
\begin{itemize}
\item its values at the $n_{\text{\tiny{E}}}$ vertices of $E$;
\item its values at the $k-1$ internal points of the $(k+1)-$point Gauss-Lobatto quadrature rule on every straight edge $e_{2},\ldots,e_{n_{E}} \in \partial E$; 
\item its values at the $k-1$ internal points of $e_{1}$ that are the images, through $\gamma_E$, of the $(k+1)-$point Gauss-Lobatto quadrature rule on $I_E$; 
\item the internal $k(k-1)/2$ moments of $v_h$ against a polynomial basis $\mathcal{M}_{k-2}(E)$ of $P_{k-2}(E)$ defined for $k\geq 2$, as:
\begin{equation}\label{moments}
\frac{1}{|E|}\int\limits_{E}v_{h}(\mathbf{x})p(\mathbf{x})\,\dd\mathbf{x} \qquad \forall\, p\in \mathcal{M}_{k-2}(E)\ \text{with} \ \|p\|_{L^{\infty}(E)} \apprle 1.
\end{equation}
\end{itemize}
According to the definition of $\widetilde{Q}^{k}_{h}(E)$,  it is easy to check that $P_{0}(E)\subset Q^{k}_{h}(E)$ while, in general, $P_{k}(E)\not\subset Q^{k}_{h}(E)$, for $k>0$. Now, choosing an arbitrary but fixed ordering of the degrees of freedom such that these are indexed by $i=1,\cdots,n$, we introduce as in \cite{BeiraoBrezziCangianiManziniMariniRusso2013} the operator $\text{dof}_{i}:Q^{k}_{h}(E) \longrightarrow \mathbf{C}$, defined as 
$$\text{dof}_{i}(v_{h}):= \text{the value of the $i$-th local degree of freedom of}\, v_{h}.$$
The basis functions $\left\{\Phi_{j}\right\}_{j=1}^n$ chosen for the space $Q^{k}_{h}(E)$ are the standard Lagrangian ones, such that
\begin{equation*}\label{basis_VEM}
\text{dof}_{i}(\Phi_{j})=\delta_{ij}, \qquad i,j=1,\ldots,n,
\end{equation*}
\noindent
$\delta_{ij}$ being the Kronecker delta.\\

\noindent
On the basis of the definition of the local enhanced virtual space $Q^{k}_{h}(E)$, we are allowed to construct the global enhanced virtual space 
\begin{equation*}\label{global_VEM_space}
Q^{k}_{h}:=\left\{v_{h}\in H^{1}_{0,\Gamma_0}(\Omega) \ : \ v_{h}\,\raisebox{-.5em}{$\vert_{E}$}\in Q^{k}_{h}(E)\quad \forall E\in\mathcal{T}_h\right\}.
\end{equation*}
\noindent
We remark that the global degrees of freedom for a generic element $v_{h}\in Q^{k}_{h}$ are:
\begin{itemize}
\item its values at each of the $\widetilde{N}_{\text{\tiny{V}}}$ vertices of $\mathcal{T}_{h}$ that do not belong to $\Gamma_0$;
\item its values at the $k-1$ internal points of the $(k+1)-$point Gauss-Lobatto quadrature rule on each of the $\bar{N}_{\text{\tiny{e}}}$ straight edges of $\mathcal{T}_{h}$; 
\item its values at the $k-1$ internal points of the $\widetilde{N}_{\text{\tiny{e}}}$ curved edge of $\mathcal{T}_{h}$, that  do not belong to $\Gamma_0$ and that are the images through the parametrization $\gamma_E$ of the the $(k+1)-$point Gauss-Lobatto quadrature rule on parametric interval $I_E$; 
\item its moments up to order $k-2$ in each of the $N_{\text{\tiny{E}}}$ elements of $\mathcal{T}_{h}$, for $k\geq 2$:
\begin{equation*}\label{moments_global}
\frac{1}{|E|}\int\limits_{E}v_{h}(\mathbf{x})p(\mathbf{x})\,\dd\mathbf{x} \qquad \forall\, p\in \mathcal{M}_{k-2}(E)(E) \ \text{with} \ \|p\|_{L^{\infty}(E)} \apprle 1.
\end{equation*}
\end{itemize}
\noindent
Consequently, $Q^{k}_{h}$ has dimension
\begin{equation}\label{global_dimension}
N:=\dim(Q^{k}_{h})=\widetilde{N}_{\text{\tiny{V}}}+(k-1)(\widetilde{N}_{\text{\tiny{e}}}+\bar{N}_{\text{\tiny{e}}})+\frac{k(k-1)}{2}N_{\text{\tiny{E}}}.
\end{equation}
\noindent
\begin{remark}
We remark that the global enhanced virtual space $Q^{k}_{h}$ defined above is slightly different from that introduced in the pioneering paper on CVEM \cite{BeiraoRussoVacca2019}, the latter being defined for the solution of the Laplace problem. However, as highlighted in Remark 2.6 in \cite{BeiraoRussoVacca2019}, the theoretical analysis therein contained can be extended to our context by following the ideas of  \cite{AhmadAlsaediBrezziMariniRusso2013}.
\end{remark}
In the following lemma we prove that Assumption \ref{H1.a} holds for the space $Q_{h}^{k}$.
\begin{Lemma}\label{Proposition:NewBRRV3.1}
Let $v\in H^{s+1}(\Omega)$ with $\nicefrac{1}{2} <s\leq k$. Then
\begin{equation*}
\inf_{v_h\in Q_{h}^{k}}\|v-v_{h}\|_{H^{1}(\Omega)} \apprle h^{s} \|v\|_{H^{s+1}(\Omega)}.
\end{equation*}
\end{Lemma}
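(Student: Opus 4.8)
The plan is to prove the estimate elementwise and then sum the local contributions, the global $H^1$ norm splitting over the decomposition $\mathcal{T}_h$ as $\|v-v_h\|_{H^1(\Omega)}^2=\sum_{E\in\mathcal{T}_h}\|v-v_h\|_{H^1(E)}^2$. Accordingly, it suffices to exhibit a single competitor $v_h\in Q_h^k$ whose restriction to each $E$ approximates $v$ with the correct local rate. I would take $v_h$ to be the canonical degrees-of-freedom interpolant $v_I$: on each $E$ let $v_I|_E\in Q_h^k(E)$ be the unique virtual function sharing with $v$ all the degrees of freedom described above (vertex values, internal Gauss--Lobatto edge values, and the internal moments \eqref{moments}). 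Since these degrees of freedom are attached to geometric entities shared by adjacent elements, $v_I$ is single-valued across inter-element edges and hence conforming in $H^1(\Omega)$; moreover, when $v$ vanishes on $\Gamma_0$ its boundary degrees of freedom vanish as well, so that $v_I\in Q_h^k$. This gives $\inf_{v_h\in Q_h^k}\|v-v_h\|_{H^1(\Omega)}\le\|v-v_I\|_{H^1(\Omega)}$, reducing everything to a local interpolation bound.

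The interpolant is well defined as soon as $s>0$: by the trace theorem the restriction of $v\in H^{s+1}(E)$ to $\partial E$ lies in $H^{s+\nicefrac{1}{2}}(\partial E)$, which embeds continuously into $C^0(\partial E)$, so the pointwise vertex and Gauss--Lobatto values are meaningful, while the internal moments make sense already for $v\in L^2(E)$. It then remains to establish the local estimate $\|v-v_I\|_{H^1(E)}\apprle h_E^{s}\|v\|_{H^{s+1}(E)}$ with a constant independent of $E$ and of $h$. This is precisely the curved virtual interpolation estimate established in \cite{BeiraoRussoVacca2019}, extended to the enhanced space as in \cite{AhmadAlsaediBrezziMariniRusso2013}, the additional restriction $\nicefrac{1}{2}<s$ being the one demanded by that estimate; its hypotheses are guaranteed here by the mesh regularity assumptions \ref{A1}--\ref{A2}.

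I expect the genuine difficulty to lie in this local estimate, and specifically in the treatment of the curved edge $e_1$. On a purely polygonal element one would first pick, via the Bramble--Hilbert lemma on the star-shaped $E$ (Assumption \ref{A1}), a polynomial $p\in P_k(E)$ with $\|v-p\|_{H^1(E)}\apprle h_E^{s}\|v\|_{H^{s+1}(E)}$, and then control $\|p-v_I\|_{H^1(E)}$ by exploiting that the interpolant reproduces polynomials. This shortcut is unavailable here, since, as the paper itself notes, $P_k(E)\not\subset Q_h^k(E)$ as soon as $\partial E$ contains a curved edge. The curved contribution must instead be handled by transporting the trace on $e_1$ to the reference interval $I_E$ through the parametrization $\gamma_E$ and estimating the perturbation introduced by this non-polynomial map; this is exactly where the $C^{m+1}$ smoothness of $\Gamma$, the Lyapunov regularity of the contour, and the edge non-degeneracy \ref{A2} enter to keep all constants uniform in $E$.

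Finally, inserting the local bound into the sum and using $h_E\le h$ for every $E$, I would conclude
\[
\|v-v_I\|_{H^1(\Omega)}^2=\sum_{E\in\mathcal{T}_h}\|v-v_I\|_{H^1(E)}^2\apprle\sum_{E\in\mathcal{T}_h}h_E^{2s}\|v\|_{H^{s+1}(E)}^2\apprle h^{2s}\|v\|_{H^{s+1}(\Omega)}^2,
\]
which is the claimed estimate upon taking square roots.
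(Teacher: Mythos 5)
Your overall strategy coincides with the paper's: both take the canonical degrees-of-freedom interpolant $v_h^I\in Q_h^k$, reduce the claim to a local element-wise estimate, and sum over $\mathcal{T}_h$ using $h_E\le h$. The genuine gap lies in what you attribute to the literature. The paper invokes Theorem 3.7 of \cite{BeiraoRussoVacca2019}, which yields only the $H^1$-\emph{seminorm} bound $|v-v_h^I|_{H^1(E)}\apprle h_E^{s}\|v\|_{H^{s+1}(E)}$, not the full local $H^1$ norm that your argument takes as a black box. Upgrading the seminorm to the full norm is precisely the nontrivial content of the paper's proof: it applies a Poincar\'e--Friedrichs inequality of the form
$\|v-v_h^I\|_{L^2(E)}\apprle h_E\,|v-v_h^I|_{H^1(E)}+\left|\int_{\partial E}\bigl(v-v_h^I\bigr)\,ds\right|$,
and then controls the boundary term edge by edge, via H\"older's inequality and the edge $L^2$ interpolation estimates (Lemma 3.2 and (3.20) of \cite{BeiraoRussoVacca2019}), arriving at $\int_{\partial E}|v-v_h^I|\,ds\apprle h_E^{s+1}\|v\|_{H^{s+1}(E)}$. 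Your write-up asserts the full-norm local bound as ``precisely the curved virtual interpolation estimate established'' in that reference; as the paper reads that reference, this is not available off the shelf, so the $L^2$ part of the local estimate is missing from your proof.

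The remaining ingredients of your proposal are sound and implicitly present in the paper as well: conformity of the interpolant across inter-element edges and on $\Gamma_0$, well-definedness of the pointwise degrees of freedom via the trace embedding, and the final summation. Your diagnosis that the curved edge $e_1$ and the failure of $P_k(E)\subset Q_h^k(E)$ obstruct a direct Bramble--Hilbert argument is also correct, but in both your proof and the paper's that difficulty is delegated to the cited reference; what cannot be delegated, and what you omitted, is the seminorm-to-norm upgrade described above.
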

\begin{proof}
Let $E$ be an element of $\mathcal{T}_{h}$. By virtue of Theorem 3.7 in \cite{BeiraoRussoVacca2019} there exists $v_h^I$, interpolant of $v$ in $Q_h^k$, such that 
\begin{equation*}
|v - v_{h}^I|_{H^{1}(E)}\apprle h_{E}^{s}\|v\|_{H^{s+1}(E)}.
\end{equation*}
Moreover, by using the Poincaré-Friedrichs inequality (see (2.11) in \cite{BrennerGuanSung2017}), we can write 
\begin{equation*}
\|v -v_{h}^I\|_{L^{2}(E)}\apprle h_{E}|v - v_{h}^I|_{H^{1}(E)}+\left|\int_{\partial E}\left[v(\mathbf{x})-v_{h}^I(\mathbf{x})\right]ds\right|\apprle h_{E}^{s+1}\|v\|_{H^{s+1}(E)}+\int_{\partial E}\left|v(\mathbf{x})-v_{h}^I(\mathbf{x})\right|ds.
\end{equation*}
Then, by applying the Hölder inequality, Lemma 3.2 and (3.20) in  \cite{BeiraoRussoVacca2019}, we can estimate the second term at the right hand side of the above inequality as follows
\begin{align*}
\int_{\partial E}\left|v(\mathbf{x})-v_{h}^I(\mathbf{x})\right|ds&=\sum_{e\subset\partial E}\int_{e}\left|v(\mathbf{x})-v_{h}^I(\mathbf{x})\right|ds\leq\sum_{e\subset\partial E}|e|^{\nicefrac{1}{2}}\|v-v_{h}^I\|_{L^{2}(e)}\leq h_{E}^{\nicefrac{1}{2}}\sum_{e\subset\partial E}\|v-v_{h}^I\|_{L^{2}(e)}\\
&\apprle h_{E}^{\nicefrac{1}{2}}\sum_{e\subset\partial E}h_{E}^{s+\nicefrac{1}{2}}\|v\|_{H^{s+\nicefrac{1}{2}}(e)} \apprle h_{E}^{s+1}\|v\|_{H^{s+1}(E)}.
\end{align*}
%
%
%
%
\noindent
Combining the local bounds for the $L^{2}$-norm and for the $H^{1}$-seminorm of $v-v_{h}^I$ on $E$, we obtain 
\begin{equation*}
\|v -v_{h}^I\|_{H^{1}(\Omega)}\apprle h^{s}\|v\|_{H^{s+1}(\Omega)},
\end{equation*}
from which the thesis easily follows.
\end{proof}

%
Finally, we introduce the boundary element space $X_{h}^{k}$ associated to the artificial boundary $\Gamma$
\begin{equation*}
X_{h}^{k}:=\left\{\lambda \in L^{2}(\Gamma) \ : \lambda_{\mkern 1mu \vrule height 2ex\mkern2mu e}\in\widetilde{P}_{k}(e), \ \forall e\in\Gamma\right\} \qquad \text{with} \qquad |e|<h,
\end{equation*}
where $|e|$ denotes the length of the edge $e$. By virtue of Theorem 4.3.20 in \cite{SauterSchwab2011}, we have that the space $X_{h}^{k}$ satisfies the interpolation property \ref{H1.b}. For what concerns the space $\widetilde{X}_{h}^{k}=X_{h}^{k}\cap H_{0}^{-\nicefrac{1}{2}}(\Gamma)$ and the corresponding hypothesis \ref{H1.c}, we refer to (3.2b) in  \cite{JohnsonNedelec1980}.
Moreover, a natural basis for the space $X_{h}^{k}$ consists in the choice of the functions $\Phi_{j_{{|_\Gamma}}}$, which are the restriction of $\Phi_{j}$ on $\Gamma$.

\subsection{The discrete bilinear forms $\mathcal{A}_{\kappa,h}$ and $\mathcal{B}_{\kappa,h}$: validity of Assumptions \ref{H2.a}, \ref{H2.b} and \ref{H3.a}--\ref{H3.c}}

\noindent
\noindent
In order to define computable discrete local bilinear forms $a^{\text{\tiny{E}}}_{h}:Q^{k}_{h}(E)\times Q^{k}_{h}(E)\rightarrow\mathbf{C}$ and $m^{\text{\tiny{E}}}_{h}:Q^{k}_{h}(E)\times Q^{k}_{h}(E)\rightarrow\mathbf{C}$, following \cite{BeiraoBrezziCangianiManziniMariniRusso2013} and by using the definition of $\Pi_{k}^{\nabla,E}$ and $\Pi_{k}^{0,E}$, we first split $a^{\text{\tiny{E}}}$ and $m^{\text{\tiny{E}}}$ in a part that can be computed exactly (up to the machine precision) and in a part that will be suitably approximated:
\begin{align}
\label{discrete_local_bilinear_form_a_0} a^{\text{\tiny{E}}}(u_{h},v_{h})&=a^{\text{\tiny{E}}}\left(\Pi_{k}^{\nabla,E}u_{h},\Pi_{k}^{\nabla,E}v_{h}\right)+a^{\text{\tiny{E}}}\left(\left(I-\Pi_{k}^{\nabla,E}\right)u_{h},\left(I-\Pi_{k}^{\nabla,E}\right)v_{h}\right)\\ 
\label{discrete_local_bilinear_form_m_0} 
m^{\text{\tiny{E}}}(u_{h},v_{h})&=m^{\text{\tiny{E}}}\left(\Pi_{k}^{0,E}u_{h},\Pi_{k}^{0,E}v_{h}\right)+m^{\text{\tiny{E}}}\left(\left(I-\Pi_{k}^{0,E}\right)u_{h},\left(I-\Pi_{k}^{0,E}\right)v_{h}\right),
\end{align}
\noindent
$I$ being the identity operator. The implementation steps for the computation of $a^{\text{\tiny{E}}}\left(\Pi_{k}^{\nabla,E}u_{h},\Pi_{k}^{\nabla,E}v_{h}\right)$ and $m^{\text{\tiny{E}}}\left(\Pi_{k}^{0,E}u_{h},\Pi_{k}^{0,E}v_{h}\right)$ require the choice of a suitable basis of the space $P_{k}(E)$, that allows to define in practice the projectors $\Pi_{k}^{\nabla,E}$ and $\Pi_{k}^{0,E}$. In accordance with the standard literature on VEM (see \cite{BeiraoBrezziMariniRusso2014}, Section 3.1), we have considered the basis of the scaled monomials, i.e.
\begin{displaymath}\label{eq:monomi}
\mathcal{M}_{k}(E):=\left\{p_{\bm{\alpha}}(\mathbf{x})=\left(\frac{\mathbf{x}-\mathbf{V}_{E}}{h_{E}}\right)^{\bm{\alpha}} , \ \bm{\alpha}=(\alpha_{1},\alpha_{2}) \ : \ |\bm{\alpha}|=\alpha_{1}+\alpha_{2}\leq k\right\},
\end{displaymath}
where, we recall, ${\bf V}_{E}$ and $h_E$ denote the mass centre and the diameter of $E$, respectively.

%

%

\noindent
Following \cite{BeiraoBrezziMariniRusso2014}, the second term in (\ref{discrete_local_bilinear_form_a_0}) is approximated by the following bilinear form which represents a stabilization term:
\begin{align}
\label{stab_bilinear_form_a}
s^{\text{\tiny{E}}}\left(\left(I-\Pi_{k}^{\nabla,E}\right)u_{h},\left(I-\Pi_{k}^{\nabla,E}\right)v_{h}\right)&:=\sum\limits_{j=1}^{n}\text{dof}_{j}\left(\left(I-\Pi_{k}^{\nabla,E}\right)u_{h}\right)\text{dof}_{j}\left(\left(I-\Pi_{k}^{\nabla,E}\right)v_{h}\right).
\end{align}
On the contrary,  since the analysis of the method 
 requires the ellipticity property only for the bilinear form $\mathcal{B}_{0,h}$ (see Assumption \ref{H3.b}), an analogous stabilizing term is not needed in (\ref{discrete_local_bilinear_form_m_0}). Therefore we define the approximations $a^{\text{\tiny{E}}}_{h}$ and $m^{\text{\tiny{E}}}_{h}$ of $a^{\text{\tiny{E}}}$ and $m^{\text{\tiny{E}}}$, respectively, as follows:
\begin{align}
\label{discrete_local_bilinear_form_a} 
a^{\text{\tiny{E}}}_{h}(u_{h},v_{h})&:=a^{\text{\tiny{E}}}\left(\Pi_{k}^{\nabla,E}u_{h},\Pi_{k}^{\nabla,E}v_{h}\right)+s^{\text{\tiny{E}}}\left(\left(I-\Pi_{k}^{\nabla,E}\right)u_{h},\left(I-\Pi_{k}^{\nabla,E}\right)v_{h}\right)\\ 
\label{discrete_local_bilinear_form_m} 
m^{\text{\tiny{E}}}_{h}(u_{h},v_{h})&:=m^{\text{\tiny{E}}}\left(\Pi_{k}^{0,E}u_{h},\Pi_{k}^{0,E}v_{h}\right).
\end{align}
As shown in \cite{BeiraoRussoVacca2018} and \cite{BeiraoRussoVacca2019}, the approximate bilinear forms satisfy the following properties: 
\begin{itemize}
\item $k$-consistency: for all $v_{h}\in Q^{k}_{h}(E)$ and for all $q\in P_{k}(E)$:
\begin{equation}\label{consistency_am} a^{\text{\tiny{E}}}_{h}(v_{h},q)=a^{\text{\tiny{E}}}(v_{h},q)\qquad \text{and} \qquad
 m^{\text{\tiny{E}}}_{h}(v_{h},q)=m^{\text{\tiny{E}}}(v_{h},q);
\end{equation}
\item stability: 
for all $v_{h}\in Q^{k}_{h}(E)$:
\begin{equation}\label{stability_am}
a^{\text{\tiny{E}}}(v_{h},v_{h})\apprle a^{\text{\tiny{E}}}_{h}(v_{h},v_{h}) \apprle a^{\text{\tiny{E}}}(v_{h},v_{h}) \qquad \text{and} \qquad 
 m^{\text{\tiny{E}}}_{h}(v_{h},v_{h})  \apprle m^{\text{\tiny{E}}}(v_{h},v_{h}).
\end{equation}
%
\end{itemize}
\noindent
The global approximate bilinear forms $a_{h},m_{h}:Q^{k}_{h}\times Q^{k}_{h}\rightarrow\mathbf{C}$ are then defined by  summing up the local contributions as follows:
\begin{equation*}\label{global_discrete_bilinear_form}
a_{h}(u_{h},v_{h}):=\sum\limits_{E\in\mathcal{T}_{h}}a^{\text{\tiny{E}}}_{h}(u_{h},v_{h}) \qquad \text{and} \qquad m_{h}(u_{h},v_{h}):=\sum\limits_{E\in\mathcal{T}_{h}}m^{\text{\tiny{E}}}_{h}(u_{h},v_{h}).
\end{equation*}
From the right hand side of (\ref{stability_am}), it immediately follows that
\begin{equation}\label{bound_m}
m_{h}(v_{h},v_{h})\apprle \left\|v_{h}\right\|_{L^{2}(\Omega)}^{2} \qquad \forall \, v_{h}\in Q^{k}_{h}
\end{equation}
while, combining (\ref{stability_am}) with the Poincaré-Friedrichs inequality (see (5.3.3) in \cite{BrennerScott2008}), we have:
\begin{equation}\label{bound_a}
\left\|v_{h}\right\|_{H^{1}(\Omega)}^{2}\apprle a_{h}(v_{h},v_{h})\apprle \left\|v_{h}\right\|_{H^{1}(\Omega)}^{2} \qquad \forall v_{h}\in Q^{k}_{h}.
\end{equation}
%

%
%
%
%
%
%
\

Moreover, the characterization of the virtual element space $Q_{h}^{k}$ and the boundary element space $X_{h}^{k}$ allows us to formally define the bilinear form $\mathcal{B}_{\kappa,h}:V^k_{h}\times V^k_{h}\rightarrow\mathbf{C}$,
\begin{equation*} \label{eq:modifiedB}
\mathcal{B}_{\kappa,h}(\hat{u}_{h},\hat{v}_{h}):= a_{h}(u_{h},v_{h})-\kappa^{2}m_{h}(u_{h},v_{h}) - \langle\lambda_{h},v_{h}\rangle_{\Gamma} + \langle \mu_{h},u_{h}\rangle_{\Gamma} + 2\langle\mu_{h},\text{V}_{\kappa} \lambda_{h}\rangle_{\Gamma}
\end{equation*}
for $\hat u_{h} = (u_{h}, \lambda_{h}), \hat v_{h} = (v_{h},\mu_{h})\in V^k_{h}$. 

From the $k$-consistency of the discrete bilinear forms $a^{\text{\tiny{E}}}_{h}$ and $m^{\text{\tiny{E}}}_{h}$ (see \eqref{consistency_am}), it immediately follows that $\mathcal{B}_{\kappa,h}$ satisfies Assumption \ref{H2.a}. 
Furthermore, the continuity of the bilinear forms $a_{h}$ and $m_{h}$, as well as the continuity of the boundary operator $\text{V}_\kappa$, ensure the $V$-norm continuity of $\mathcal{B}_{\kappa,h}$, i.e. Assumption \ref{H2.b}. Analogously, the $W$-norm continuity of $\mathcal{D}_{\kappa,h}=\mathcal{B}_{\kappa,h}-\mathcal{B}_{0,h}$, i.e. Assumption \ref{H3.a}, is a consequence of the continuity of $m_{h}$ and of Lemma \ref{Lemma:Vcompa}.\\

\noindent

Now, to prove the $\widetilde{V}_{h}^{k}$-ellipticity, i.e. assumption \ref{H3.b}, we focus on the term
\begin{equation*} \label{eq:modifiedB0}
\mathcal{B}_{0,h}(\hat{v}_{\0 h},\hat{v}_{\0 h}) = a_{h}(v_{h},v_{h}) + 2\langle\mu_{\0 h},\text{V}_{0} \mu_{\0 h}\rangle_{\Gamma}
\end{equation*}
\noindent for $\hat v_{\0 h} = (v_h,\mu_{\0 h}) \in \widetilde V^k_h$. In order to bound the first and the second term in the above sum,
we use (\ref{bound_a}) and Theorem 6.22 in \cite{Steinbach2008}, respectively, and we get
\begin{displaymath}
\mathcal{B}_{0,h}(\hat{v}_{\0 h},\hat{v}_{\0 h})
\apprge\left\|v_{h}\right\|_{H^{1}(\Omega)}^{2}+\left\|\mu_{\0 h}\right\|_{H^{-\nicefrac{1}{2}}(\Gamma)}^{2}=\left\|\hat{v}_{h}\right\|_{V}^{2}.
\end{displaymath}
\noindent
Thus, Assumption \ref{H3.c} is a direct consequence of the $k$-consistency  \eqref{consistency_am}.

\subsection{The discrete linear operator $\mathcal{L}_{f,h}$: validity of Assumption \ref{H4.a}}
In the present section, we define the discrete linear operator $\mathcal{L}_{f,h}:V_{h}^{k}\rightarrow\mathbf{C}$ such that
\begin{equation*}
\mathcal{L}_{f,h}(\hat v_{h}) := \begin{cases}
 \sum\limits_{E \in \mathcal{T}_h} m^{E}(f,\Pi^{0,E}_{1}v_{h}) & k=1,2\, ,\\
 \sum\limits_{E \in \mathcal{T}_h} m^{E}(f,\Pi^{0,E}_{k-2}v_{h}) &  k \ge 3.
\end{cases}
\end{equation*}
Assuming $f \in H^{k-1}(\Omega)$, in \cite{BrennerGuanSung2017} (see Lemma 3.4) it has been proved that
\begin{equation*}
\left|\mathcal{L}_{f}(\hat v_{h}) - \mathcal{L}_{f,h}(\hat v_{h})\right|\apprle h^{k}\left|f\right|_{H^{k-1}(\Omega)} \left\|v_{h}\right\|_{H^{1}(\Omega)}.
\end{equation*}
Hence, Assumption \ref{H4.a} is fulfilled with $\sigma(f) = \left|f\right|_{H^{k-1}(\Omega)}$.\\
\subsection{Algebraic formulation of the discrete problem}

\newcommand{\Stot}{\mathcal{S}}
\newcommand{\Sint}{\mathcal{S}^I}
\newcommand{\SG}{\mathcal{S}^{\Gamma_0}}
\newcommand{\SB}{\mathcal{S}^{\Gamma}}

For what follows, in order to detail the algebraic form of the coupled CVEM-BEM method, it is convenient to re-order and split the complete index set $\Stot:=\{j=1,\cdots,N\}$  of the basis functions $\left\{\Phi_{j}\right\}_{j\in\Stot}$ of $Q^{k}_{h}$ as
\begin{equation}\label{def_Stot}
\Stot=\Sint\cup\SB,
\end{equation}
where $\Sint$ and $\SB$ denote the sets of indices related to the internal degrees of freedom and to the degrees of freedom lying on $\Gamma$, respectively.
With this choice, we have
$$Q_{h}^{k} = \text{span}\left\{\Phi_{j}\right\}_{j\in\Sint\cup\SB}, \qquad X^{k}_{h}= \text{span}\left\{\Phi_{j_{{|_\Gamma}}}\right\}_{j\in\SB}.$$
%

\noindent
In order to derive the linear system associated to the discrete problem \eqref{model_problem_galerkin}, we expand the unknown function $\hat u_{h} = (u_{h}, \lambda_{h}) \in Q_{h}^{k} \times X^{k}_{h}$  as
\begin{equation}\label{interpolants}
\begin{aligned}
u_{h}(\mathbf{x})=:\sum\limits_{j\in\Stot}u_{h}^{j}\Phi_{j}(\mathbf{x}) \qquad \text{with} \quad u_{h}^{j}=\text{dof}_{j}(u_{h})\\
\lambda_{h}(\mathbf{x})=:\sum\limits_{j\in\SB}\lambda_{h}^{j}\Phi_{j_{{|_\Gamma}}}(\mathbf{x}) \qquad \text{with} \quad \lambda_{h}^{j}=\text{dof}_{j}(\lambda_{h}).
\end{aligned}
\end{equation}

%
\noindent
Hence, using the basis functions of $Q^k_h$ to test the discrete counterpart of equation \eqref{model_problem_variational_1}, we get for $i\in\Sint\cup\SB$
%
\begin{align}\label{VEM_discrete_equation}
\sum\limits_{j\in\Sint\cup\SB}u_{h}^{j}&\sum\limits_{E\in\mathcal{T}_{h}}\left[a^{\text{\tiny{E}}}_{h}(\Phi_{j},\Phi_{i})-\kappa^{2}m^{\text{\tiny{E}}}_{h}(\Phi_{j},\Phi_{i})\right]-\sum\limits_{j\in\SB}\lambda_{h}^{j}\langle\Phi_{j},\Phi_{i}\rangle_{\Gamma} = \mathcal{L}_{f,h}((\Phi_i,0)).
\end{align}

To write the matrix form of the above linear system, we introduce the stiffness and mass matrices $\mathbb{A}$, $\mathbb{M}$ and the matrix $\mathbb{Q}$ whose entries are respectively defined by
\begin{displaymath}
\mathbb{A}_{ij} :=  \sum\limits_{E\in\mathcal{T}_{h}}a^{\text{\tiny{E}}}_{h}(\Phi_{j},\Phi_{i}), \qquad 
\mathbb{M}_{ij} :=  \sum\limits_{E\in\mathcal{T}_{h}}m^{\text{\tiny{E}}}_{h}(\Phi_{j},\Phi_{i}), \qquad \mathbb{Q}_{ij} :=  \langle\Phi_{j},\Phi_{i}\rangle_{\Gamma}
\end{displaymath}
and the column vectors $\mathbf{u} = \left[u_{h}^{j}\right]_{j\in\Sint\cup\SB}$, $\boldsymbol{\lambda} = \left[\lambda_{h}^{j}\right]_{j\in\SB}$ and $\mathbf{f} = \left[\mathcal{L}_{f,h}((\Phi_i,0))\right]_{i\in\Sint\cup\SB}$.
In accordance with the splitting of the set of the degrees of freedom \eqref{def_Stot}, we consider the block partitioned representation of the above matrices and vectors (with obvious meaning of the notation), and we rewrite equation \eqref{VEM_discrete_equation} as follows:
\begin{eqnarray}\label{VEM_system}
\left[
\begin{array}{ll}
\mathbb{A}^{I I}-\kappa^{2}\mathbb{M}^{I I}& \mathbb{A}^{I \Gamma} - \kappa^{2}\mathbb{M}^{I \Gamma}\\
&\\
\mathbb{A}^{\Gamma I}-\kappa^{2}\mathbb{M}^{\Gamma I} & \mathbb{A}^{\Gamma \Gamma} -\kappa^{2}\mathbb{M}^{\Gamma \Gamma}\\
\end{array}\right]\left[\begin{array}{l}
\mathbf{u}^I\\
\\
\mathbf{u}^\Gamma
\end{array}
\right]
-	\left[
\begin{array}{l}
\mathbb{Q}^{I \Gamma} \boldsymbol{\lambda}\\
\\
\mathbb{Q}^{\Gamma \Gamma} \boldsymbol{\lambda}
\end{array}
\right]
= 	\left[
\begin{array}{l}
\mathbf{f}^{I}\\
\\
\mathbf{f}^\Gamma
\end{array}\right]
\end{eqnarray}
noting that $\mathbb{Q}^{I \Gamma}$ is a null matrix since $\langle\Phi_{j},\Phi_{i}\rangle_{\Gamma} = 0$ for $i\in\Sint$, $j\in\SB$.
\noindent

\

For what concerns the discretization of the BI-NRBC, by inserting \eqref{interpolants} in \eqref{model_problem_variational_2} and testing with the functions $\Phi_i$, $i \in\SB$, we obtain
\begin{equation}\label{boundary_integral_equation_discretized}
\begin{aligned}
\sum\limits_{j\in\SB}&\left\{u_{h}^{j}\left[\frac{1}{2}\int\limits_{\Gamma}\Phi_{j}(\mathbf{x})\Phi_{i}(\mathbf{x})\dd\Gamma_\mathbf{x}-\int\limits_{\Gamma}\left(\int\limits_{\Gamma}\frac{\partial G}{\partial\mathbf{n}_{\mathbf{y}}}(\mathbf{x},\mathbf{y})\Phi_{j}(\mathbf{y})\,\dd\Gamma_{\mathbf{y}}\right)\Phi_{i}(\mathbf{x})\dd\Gamma_{\mathbf{x}}\right]\right.\\
&\,\,\left.+\lambda_{h}^{j}\int\limits_{\Gamma}\left(\int\limits_{\Gamma}G(\mathbf{x},\mathbf{y})\Phi_{j}(\mathbf{y})\,\dd\Gamma_{\mathbf{y}}\right)\Phi_{i}(\mathbf{x})\dd\Gamma_{\mathbf{x}}\right\}=0.
\end{aligned}
\end{equation}
To detail the computation of the integrals in \eqref{boundary_integral_equation_discretized}, 
we start by splitting the integral on the whole $\Gamma$ into the sum of the contributions associated to each boundary edge $\Gamma_{_\ell}$, $\ell = 1,\cdots,N^\Gamma$
\begin{equation}\label{boundary_integral_equation_discretized_split}
\begin{aligned}
\sum\limits_{j\in\SB}&\left\{u_{h}^{j}\left[\frac{1}{2}\sum_{\ell=1}^{N^\Gamma}\int\limits_{\Gamma_{_\ell}}\Phi_{j}(\mathbf{x})\Phi_{i}(\mathbf{x})\dd\Gamma_\mathbf{x}-\sum_{\ell=1}^{N^\Gamma}\int\limits_{\Gamma_\ell}\left(\sum_{r=1}^{N^\Gamma}\int\limits_{\Gamma_r}\frac{\partial G}{\partial\mathbf{n}_{\mathbf{y}}}(\mathbf{x},\mathbf{y})\Phi_{j}(\mathbf{y})\,\dd\Gamma_{\mathbf{y}}\right)\Phi_{i}(\mathbf{x})\dd\Gamma_{\mathbf{x}}\right]\right.\\
&\,\,\left.+\lambda_{h}^{j}\sum_{\ell=1}^{N^\Gamma}\int\limits_{\Gamma_\ell}\left(\sum_{r=1}^{N^\Gamma}\int\limits_{\Gamma_r}G(\mathbf{x},\mathbf{y})\Phi_{j}(\mathbf{y})\,\dd\Gamma_{\mathbf{y}}\right)\Phi_{i}(\mathbf{x})\dd\Gamma_{\mathbf{x}}\right\}=0.
\end{aligned}
\end{equation}
Then, denoting by $E_\ell$, $\ell = 1,\cdots, N^\Gamma$, the mesh element of $\mathcal{T}_h$ that has one of its curved edges on $\Gamma$ and by $\gamma_{E_\ell}:I_{E_\ell}\rightarrow \Gamma_\ell$ the associated pameterization, we rewrite \eqref{boundary_integral_equation_discretized_split} by introducing $\gamma_{E_\ell}$ and hence by reducing the integration over $\Gamma_\ell$ to that over the parametric interval $I_{E_\ell}$: 
\begin{equation*}
\begin{aligned}
\sum\limits_{j\in\SB}&\left\{u_{h}^{j}\left[\frac{1}{2}\sum_{\ell=1}^{N^\Gamma}\int\limits_{I_{E_\ell}}\Phi_{j}\Big(\gamma_{E_\ell}(\vartheta)\Big)\Phi_{i}\Big(\gamma_{E_\ell}(\vartheta)\Big)\Big|\gamma'_{E_\ell}(\vartheta)\Big|\dd\vartheta\right.\right.\\
&\,\,\left.\left. -\sum_{\ell=1}^{N^\Gamma}\int\limits_{I_{E_\ell}}\left(\sum_{r=1}^{N^\Gamma}\int\limits_{I_{E_r}}\frac{\partial G}{\partial\mathbf{n}_{\mathbf{y}}}\Big(\gamma_{E_\ell}(\vartheta),\gamma_{E_r}(\sigma)\Big)\Phi_{j}\Big(\gamma_{E_r}(\sigma)\Big)\,\Big|\gamma'_{E_r}(\sigma)\Big|\dd\sigma\right)\Phi_{i}\Big(\gamma_{E_\ell}(\vartheta)\Big)\Big|\gamma'_{E_\ell}(\vartheta)\Big|\dd\vartheta\right]\right.\\
&\,\,\left.+\lambda_{h}^{j}\sum_{\ell=1}^{N^\Gamma}\int\limits_{I_{E_\ell}}\left(\sum_{r=1}^{N^\Gamma}\int\limits_{I_{E_r}}G\Big(\gamma_{E_\ell}(\vartheta),\gamma_{E_r}(\sigma)\Big)\Phi_{j}\Big(\gamma_{E_r}(\sigma)\Big)\,\Big|\gamma'_{E_r}(\sigma)\Big|\dd\sigma\right)\Phi_{i}\Big(\gamma_{E_\ell}(\vartheta)\Big)\Big|\gamma'_{E_\ell}(\vartheta)\Big|\dd\vartheta\right\}=0.
\end{aligned}
\end{equation*}
Finally, introducing the BEM matrices $\mathbb{V}$ and $\mathbb{K}$ whose entries, for $i,j\in \SB$, are respectively
\begin{align*}
\mathbb{V}_{ij}&:=\sum_{\ell=1}^{N^\Gamma}\int\limits_{I_{E_\ell}}\left(\sum_{r=1}^{N^\Gamma}\int\limits_{I_{E_r}}G\Big(\gamma_{E_\ell}(\vartheta),\gamma_{E_r}(\sigma)\Big)\Phi_{j}\Big(\gamma_{E_r}(\sigma)\Big)\,\Big|\gamma'_{E_r}(\sigma)\Big|\dd\sigma\right)\Phi_{i}\Big(\gamma_{E_\ell}(\vartheta)\Big)\Big|\gamma'_{E_\ell}(\vartheta)\Big|\dd\vartheta\\
\mathbb{K}_{ij}&:=\sum_{\ell=1}^{N^\Gamma}\int\limits_{I_{E_\ell}}\left(\sum_{r=1}^{N^\Gamma}\int\limits_{I_{E_r}}\frac{\partial G}{\partial\mathbf{n}_{\mathbf{y}}}\Big(\gamma_{E_\ell}(\vartheta),\gamma_{E_r}(\sigma)\Big)\Phi_{j}\Big(\gamma_{E_r}(\sigma)\Big)\,\Big|\gamma'_{E_r}(\sigma)\Big|\dd\sigma\right)\Phi_{i}\Big(\gamma_{E_\ell}(\vartheta)\Big)\Big|\gamma'_{E_\ell}(\vartheta)\Big|\dd\vartheta,
\end{align*}
the matrix form of the BI-NRBC reads
\begin{equation}\label{BEM_system}
\left(\frac{1}{2}\mathbb{Q}^{\Gamma \Gamma}-\mathbb{K}\right)\mathbf{u}^{\Gamma}+\mathbb{V}\boldsymbol{\lambda}=\mathbf{0}.
\end{equation}
\noindent
%
%
By combining \eqref{VEM_system} with \eqref{BEM_system} we obtain the final linear system
%
\begin{eqnarray}\label{final_system}
\left[
\begin{array}{ccc}
\mathbb{A}^{I I}-\kappa^{2}\mathbb{M}^{I I}& \mathbb{A}^{I \Gamma} - \kappa^{2}\mathbb{M}^{I \Gamma} & \mathbb{O}\\

&\\
\mathbb{A}^{\Gamma I}-\kappa^{2}\mathbb{M}^{\Gamma I} & \mathbb{A}^{\Gamma \Gamma} -\kappa^{2}\mathbb{M}^{\Gamma \Gamma} & -\mathbb{Q}^{\Gamma \Gamma}\\
&\\
\mathbb{O}   & \frac{1}{2}\mathbb{Q}^{\Gamma \Gamma}-\mathbb{K}& \mathbb{V}
\end{array}\right]\left[\begin{array}{l}
\mathbf{u}^I\\
\\
\mathbf{u}^\Gamma\\
\\
\boldsymbol{\lambda}
\end{array}
\right]
= 	\left[
\begin{array}{c}
\mathbf{f}^{I}\\
\\
\mathbf{f}^\Gamma\\
\\
\mathbf{0}
\end{array}\right]
\end{eqnarray}
which represents the matrix form of the coupling of equations \eqref{VEM_discrete_equation} and \eqref{boundary_integral_equation_discretized_split}.


\section{Numerical results}\label{sec_6_num_results}
In this section, we present some numerical test to validate the theoretical results and to show the effectiveness of the proposed method. To this aim, some preliminary features are addressed.\\
\indent
For the generation of the partitioning $\mathcal{T}_{h}$ of the computational domain $\Omega$, we have used the GMSH software (see \cite{GeuzaineRemacle2009}). In particular, we have built unstructured conforming meshes consisting of quadrilaterals, by employing the Mesh.ElementOrder option within the GMSH code. If an element $E$ has a (straight) edge bordering with a curvilinear part of $\partial\Omega$, we transform it into a curved boundary edge by means of a suitable parametrization. 
In order to develop a convergence analysis, we start by considering the coarse mesh associated to the level of refinement zero (lev. 0) and all the successive refinements are obtained by halving each side of its elements.\\
\indent
To test our numerical approach, the order $k$ of the approximation spaces is chosen equal to 1 (linear) and 2 (quadratic) for both spaces $Q_{h}^{k}$ and $X_{h}^{k}$. Moreover, recalling that the approximate solution $u_{h}$ is not known inside the polygons, as suggested in \cite{BeiraoRussoVacca2019} we compute the $H^1$-seminorm and $L^2$-norm errors, and the corresponding Estimated Order of Convergence (EOC), by means of the following formulas:
\begin{itemize}
\item $H^{1}$-seminorm error $\varepsilon^{\nabla,k}_{\text{lev}}:=\sqrt{\frac{\sum\limits_{E\in\mathcal{T}_{h}}\left|u-\Pi_{k}^{\nabla,E}u_{h}\right|^{2}_{H^{1}(E)}}{\sum\limits_{E\in\mathcal{T}_{h}}\left|u\right|^{2}_{H^{1}(E)}}}$ and $\text{EOC}:=\log_{2}\left(\frac{\varepsilon^{\nabla,k}_{\text{lev}+1}}{\varepsilon^{\nabla,k}_{\text{lev}}}\right)$;
\item $L^{2}$-norm error $\varepsilon^{0,k}_{\text{lev}}:=\sqrt{\frac{\sum\limits_{E\in\mathcal{T}_{h}}\left\|u-\Pi_{k}^{0,E}u_{h}\right\|^{2}_{L^{2}(E)}}{\sum\limits_{E\in\mathcal{T}_{h}}\left\|u\right\|^{2}_{L^{2}(E)}}}$ and $\text{EOC}:=\log_{2}\left(\frac{\varepsilon^{0,k}_{\text{lev}+1}}{\varepsilon^{0,k}_{\text{lev}}}\right)$.
\end{itemize}
\noindent
In the above formulas the superscript $k=1,2$ refers to the linear or quadratic order approximation of $u$, the subscript $\text{lev}$ refers to the refinement level and, we recall, $\Pi_{k}^{\nabla,E}$ and $\Pi_{k}^{0,E}$ are the local $H^{1}$ and $L^{2}$-projector defined in (\ref{proj_Pi_nabla}) and (\ref{proj_Pi_0}), respectively.\\

\noindent
All the numerical computations have been performed on a cluster with two Intel\textsuperscript{\textregistered} XEON\textsuperscript{\textregistered} E5-2683v4 CPUs (2.1 GHz clock frequency and 16 cores) by means of parallel Matlab\textsuperscript{\textregistered} codes.

\subsection{On the computation of the integrals involved in the proposed approach}

We start by describing the quadrature techniques adopted for the computation of the integrals appearing in the local bilinear forms $a_h^E$ and $m_h^E$ in \eqref{discrete_local_bilinear_form_a} and \eqref{discrete_local_bilinear_form_m}.
To this aim we point out that, if the element $E$ is a polygon with straight edges, the choice of $\mathcal{M}_{k}(E)$ defined by \eqref{eq:monomi} allows for an exact (up to the machine precision) and easy computation of the first term in the right hand side of \eqref{discrete_local_bilinear_form_a} and of \eqref{discrete_local_bilinear_form_m} (for the details we refer to formulas (27)--(30) in \cite{DesiderioFallettaScuderi2021}).		
On the contrary, if the element $E$ is a polygon with a curved edge, the corresponding integrals can be computed by applying the $n$-point Gauss-Lobatto quadrature rule. This latter, contrarily to the former, is affected by a quadrature error if the involved parametrization is not of polynomial type. However, in our test, the choice $n=8$ has guaranteed the optimal convergence order of the global scheme.

\

For what concerns the evaluation of the $H^1$-seminorm and $L^2$-norm errors, to
compute the associated integrals over polygons we have used the $n$-point quadrature formulas proposed in \cite{SommarivaVianello2007} and \cite{SommarivaVianello2009}, which are exact for polynomials of degree at most $2n$. For curved polygons, we have applied the generalization of these formulas suggested in \cite{SommarivaVianello2007} (see Remark 1). In this case too, we have chosen $n=8$.

\

Finally, for what concerns the computation of the integrals defining the BEM matrix elements, since in the theoretical analysis we have assumed that the boundary integral operators are not approximated, it is crucial to compute them with a high accuracy. We recall that the numerical integration difficulties spring from the asymptotic behaviour of the Hankel function $H_0^{(1)}(r)$ near the origin (see \eqref{asymptotic_behaviour_H_0_1}), the latter being the kernel of the single layer operator $\text{V}_\kappa$. To compute the corresponding integrals with high accuracy by a small number of nodes, we have used the very simple and efficient polynomial smoothing technique
proposed in \cite{MonegatoScuderi1999}, referred as the \emph{q-smoothing} technique. It is worth noting that such technique is applied only when the distance $r$ approaches to zero. This case  corresponds to the matrix entries belonging to the main diagonal and to those co-diagonals, for which the supports of the basis functions overlap or are contiguous.
After having introduced the \emph{q-smoothing} transformation, with $q=3$, we have applied the $n$-point Gauss-Legendre quadrature rule with  $n=9$ for the outer integrals, and $n=8$ for the inner ones (see \cite{FallettaMonegatoScuderi2014} and Remark 3 in  \cite{DesiderioFallettaScuderi2021} for further details). For the computation of all the other integrals, we have applied a $9\times 8$-point Gauss-Legendre product quadrature rule. Incidentally, we point out that the integrals involving the Bessel function $H^{(1)}_1(r)$, appearing in the kernel of the double layer operator $\text{K}_\kappa$ (see \eqref{asymptotic_behaviour_H_1_1}), do not require a particular quadrature strategy, since its singularity $1/r$ is factored out by the same behaviour of the Jacobian near the origin. Hence, for the computation of the entries of the matrix $\mathbb{K}$, we have directly applied a $9\times 8$-point Gauss-Legendre product quadrature rule.

\

The above described quadrature strategy has guaranteed the computation of all the mentioned integrals with a full precision accuracy (16-digit double precision arithmetic) for both $k=1$ and $k=2$.

%
%
%
\subsection{Definition of the test problem}

\noindent
We consider Problem (\ref{dirichlet_problem}) with source $f = 0$ and Dirichlet condition
\begin{equation}\label{example_dir_condition}
g(\mathbf{x})=\frac{\imath}{4}H_{0}^{(1)}\left(\kappa|\mathbf{x}-\mathbf{x}_{0}|\right) \quad \text{with} \quad \mathbf{x}_{0}=(0,0), \ \mathbf{x}\in\Gamma_{0}
\end{equation}
prescribed on the boundary $\Gamma_{0}$, $H_{0}^{(1)}$ being the 0-th order Hankel function of the first kind. In this case, the exact solution $u(\mathbf{x})$ is known and it is the field produced by the point source $\mathbf{x}_{0}$. Its expression is given by \eqref{example_dir_condition} for every $\mathbf{x}\in\mathbf{R}^{2}$. In the sequel, we report the numerical results corresponding to two choices of the wave number $\kappa=1$ and $\kappa=10$. 

It is worth noting that the system \eqref{final_system} is associated to the discretization of the model problem considered for the theoretical analysis in which, we recall, the Dirichlet boundary condition is null. In the forthcoming examples, dealing with null source $f$ and non vanishing Dirichlet datum $g$, the right hand side term of \eqref{final_system} involves the function $g$ instead of $f$ (see, for example,  \cite{DesiderioFallettaScuderi2021} for details on the corresponding algebraic linear system).

\

In Example 1 we test the CVEM-BEM approach for the choice of a computational domain having both interior and artificial curved boundaries, for which the theoretical analysis has been given. As we will show, the use of the CVEM reveals to be crucial to retrieve the optimal convergence order of the global scheme. Indeed, the standard VEM approach, defined on the polygon that approximates the curved computational domain, allows retrieving only a sub-optimal convergence order (see Figure \ref{fig:circ_circ_error_VEM_kappa_1}).

However, even if we have not provided theoretical results for the CVEM-BEM coupling in the case of a non sufficiently smooth artificial boundary $\Gamma$, in Example 2 we show that the proposed method allows to obtain the optimal convergence order also when a polygonal $\Gamma$ is considered.

 \subsection{Example 1. Computational domain with curved boundaries} 
 \noindent
Let us consider the unbounded region $\Omega_{\text{e}}$, external to the unitary disk $\Omega_{0}:=\{\mathbf{x}=(x_{1},x_{2})\in\mathbf{R}^{2} \ : \ x_{1}^{2}+x_{2}^{2}\leq1\}$. The artificial boundary $\Gamma$ is the circumference of radius 2, $\Gamma = \{\mathbf{x}=(x_{1},x_{2})\in\mathbf{R}^{2} \ : \ x_{1}^{2}+x_{2}^{2} = 4\}$, so that the finite computational domain $\Omega$ is the annulus bounded internally by $\Gamma_{0} = \partial \Omega_0$ and externally by $\Gamma$.  
%

In Table \ref{tab:dofs_circ_circ}, we report the total number of the degrees of freedom associated to the CVEM space, corresponding to each decomposition level of the computational domain, and the approximation orders $k=1,2$. To give an idea of the curvilinear mesh used, in Figure \ref{fig:circ_circ_meshes} we plot the meshes corresponding to level 0 (left plot) and level 2 (right plot).  We remark that
the maximum level of refinement we have considered is lev. 7 for $k=1$, whose number of degrees of freedom coincides with that of lev. 6 for $k=2$. Therefore, in the following tables the symbol $\times$ means that the corresponding simulation has not been performed. 
\begin{figure}[H]
	\centering
	\includegraphics[width=0.70\textwidth]{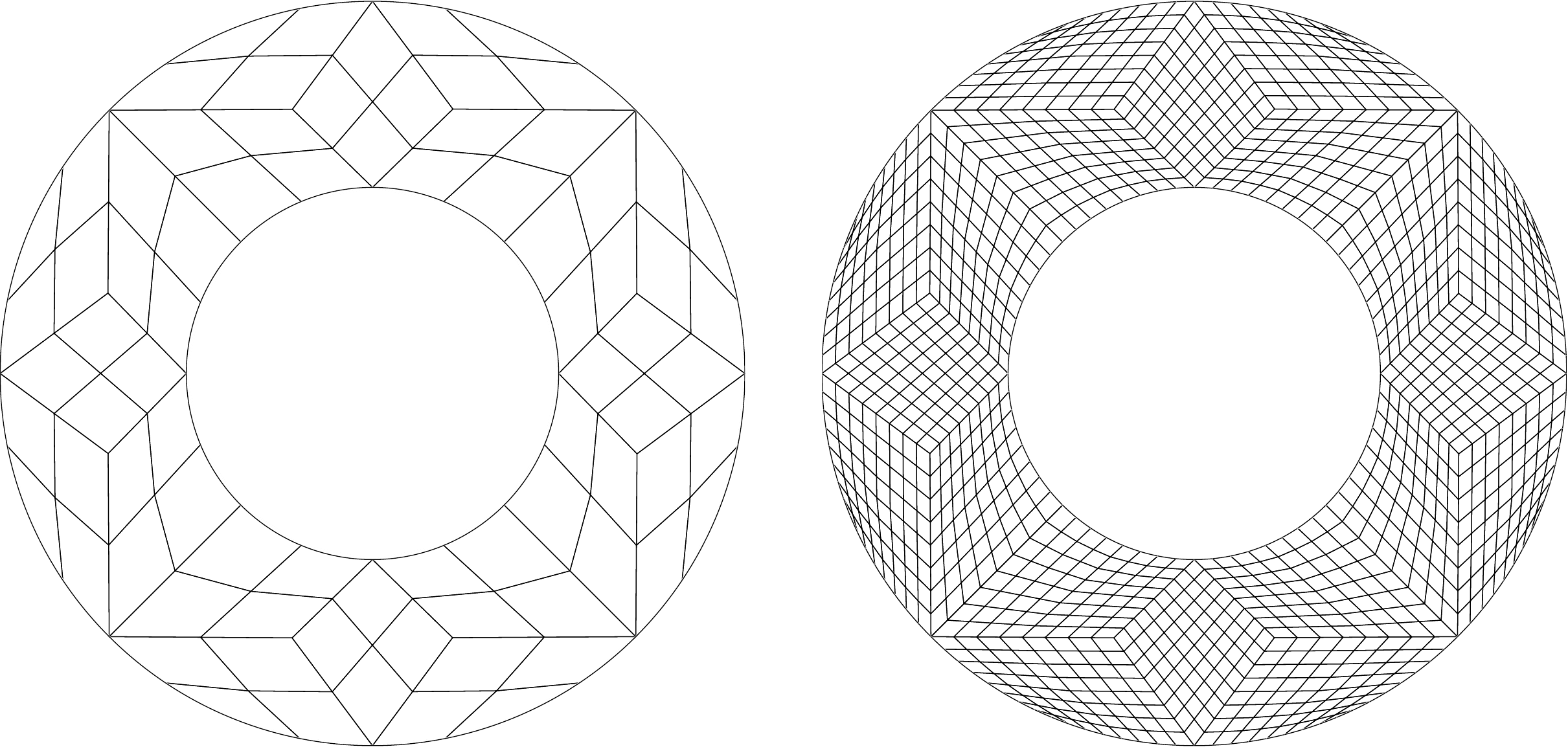}
	\caption{Meshes of $\Omega$ for lev. 0 (left plot) and lev. 2 (right plot).}
	\label{fig:circ_circ_meshes}
\end{figure}
\begin{table}[H]
 \centering
\begin{tabular}{lcccccccc}
\toprule%
	& lev. 0 & lev. 1 & lev. 2 & lev. 3	& lev. 4 & lev. 5 & lev. 6 & lev. 7\\ 
\toprule%
$k=1$	&   $104$	&	   $368$ 	&	$1,376$	&	  $5,312$ &	$20,864$	&	  $82,688$ &   	   $329,216$  &   $1,313,792$\\
$k=2$	&   $368$	&	$1,376$ 	&	$5,312$	& 	$20,864$ & 	$82,688$	& 	$329,216$ & 	$1,313,792$ & \multicolumn{1}{c}{$\times$}\\
\bottomrule
\end{tabular}
\caption{Example 1. Total number of degrees of freedom for $k=1,2$ and for different levels of refinement.}
\label{tab:dofs_circ_circ}
\end{table}
\noindent
In Figures \ref{fig:circ_circ_kappa_1} and \ref{fig:circ_circ_kappa_10}, we show the real and imaginary parts of the numerical solution for the wave numbers $\kappa=1$ and $\kappa=10$ respectively, obtained by the quadratic approximation associated to the minimum refinement level for which the graphical behaviour is accurate and not wavy; this latter is lev. 3 for Figure \ref{fig:circ_circ_kappa_1} and lev. 5 for Figure \ref{fig:circ_circ_kappa_10}. 
%
\begin{figure}[H]
	\centering
	\includegraphics[width=1.00\textwidth]{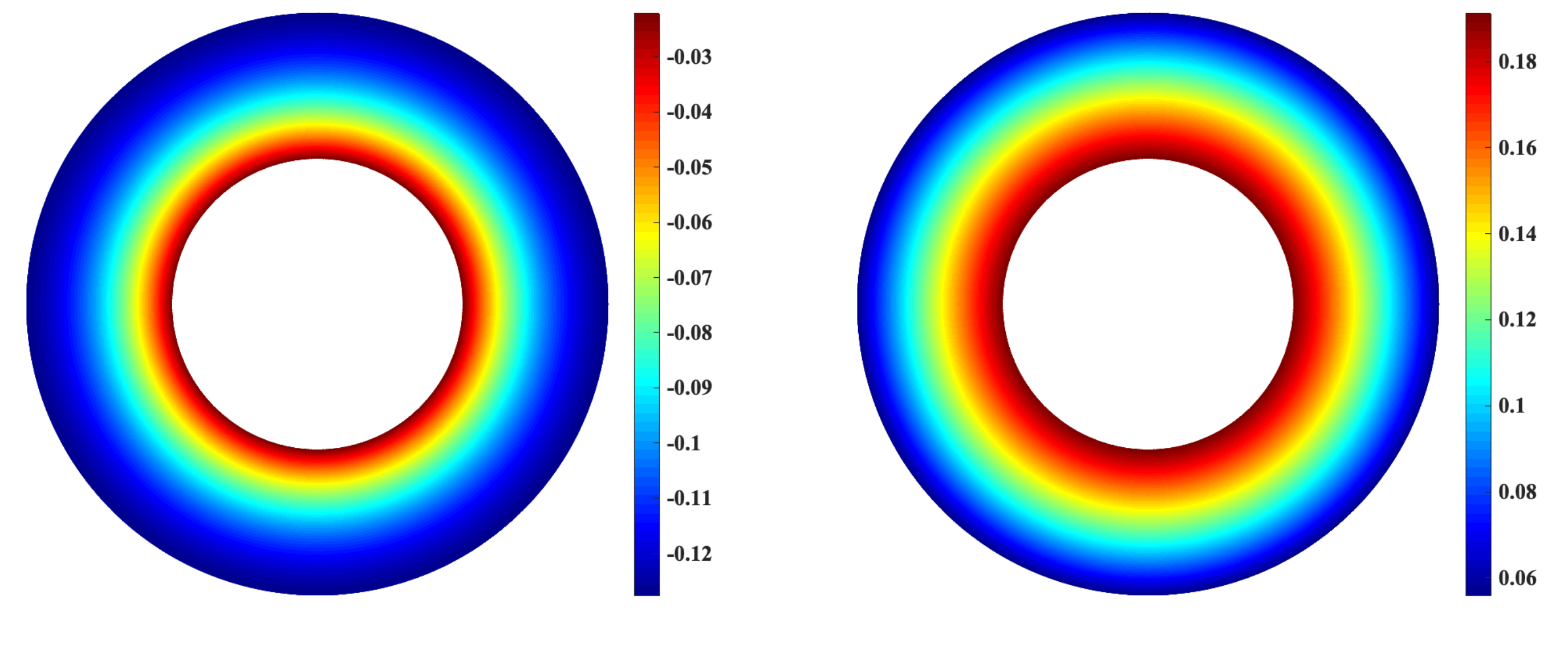}
	\caption{Example 1. Real (left plot) and imaginary (right plot) part of the numerical solution for $\kappa=1$, lev. 3 and $k=2$.}
	\label{fig:circ_circ_kappa_1}
\end{figure}

\begin{figure}[H]
	\centering
	\includegraphics[width=1.00\textwidth]{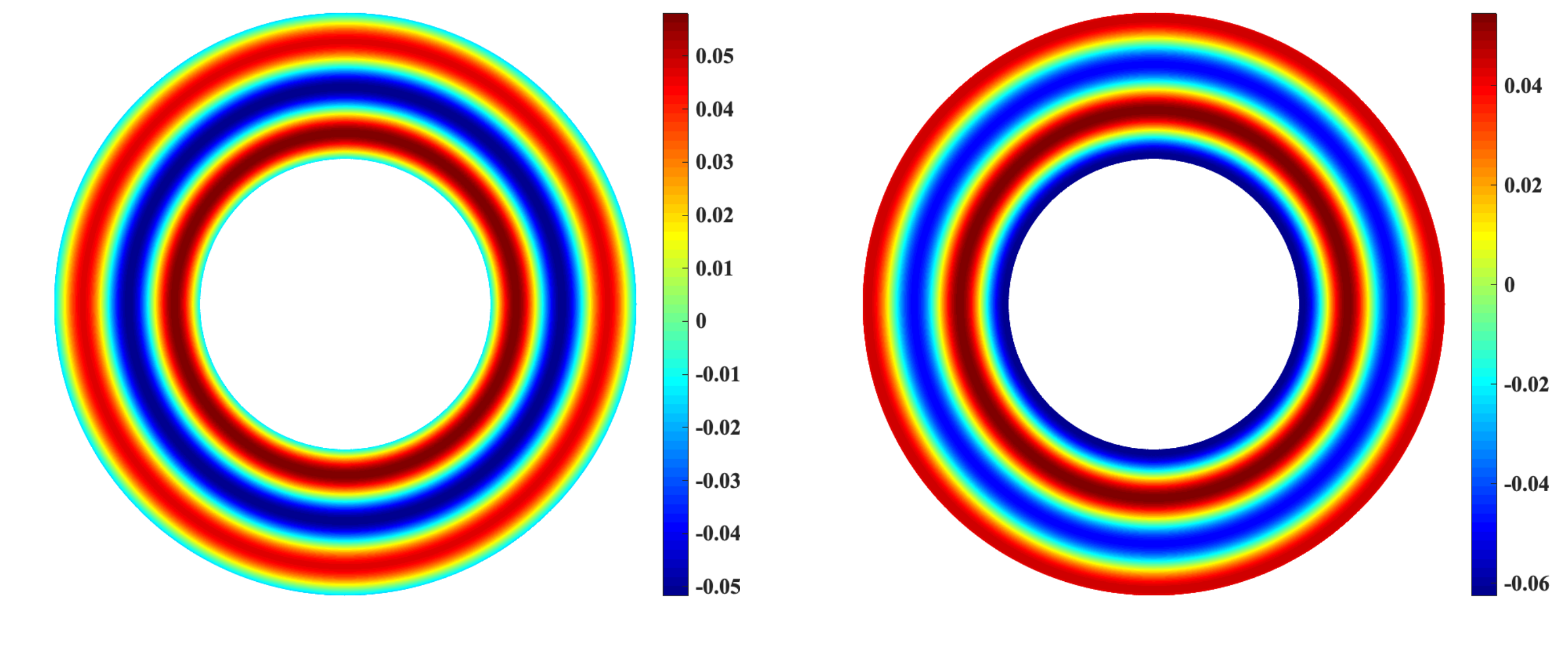}
	\caption{Example 1. Real (left plot) and imaginary (right plot) part of the numerical solution for $\kappa=10$, lev. 5 and $k=2$.}
	\label{fig:circ_circ_kappa_10}
\end{figure}
\noindent
In Tables \ref{tab:convergence_circ_circ_CVEM_BEM_kappa_1} and \ref{tab:convergence_circ_circ_CVEM_BEM_kappa_10}, we report the errors $\varepsilon^{\nabla,k}_{\text{lev}}$ and  $\varepsilon^{0,k}_{\text{lev}}$ and the corresponding EOC. As we can see, for both $\kappa=1$ and $\kappa=10$, the $H^{1}$-seminorm error confirms the convergence order $k$ of the method. Although we did not provide the $L^{2}$-norm error estimate, the reported numerical results show the expected  convergence order $k+1$. 
%
%
\begin{table}[H]
 \centering
  \def\sym#1{\ifmmode^{#1}\else\(^{#1}\)\fi}%
    \scalebox{0.94}{
\begin{tabular}{lc||cccc||cccc}
\toprule%
\multicolumn{2}{c}{} & \multicolumn{4}{c}{$L^{2}$-norm} & \multicolumn{4}{c}{$H^{1}$-seminorm}\\
\toprule%
lev.	& $h$ & $\varepsilon^{0,1}_{\text{lev}}$ & $\text{EOC}$ & $\varepsilon^{0,2}_{\text{lev}}$	& $\text{EOC}$ & $\varepsilon^{\nabla,1}_{\text{lev}}$ & $\text{EOC}$ & $\varepsilon^{\nabla,2}_{\text{lev}}$	& $\text{EOC}$\\ 	
\toprule%
$0$ & $8.02e-01$ & $1.64e-02$ & $$ & $5.83e-04$ & $$ & $5.22e-02$ & $$ & $6.07e-03$ & $$\\
$$  &  $$  &  $$  &  $1.9$  &  $$  &  $3.0$  &  $$  &  $1.0$  &  $$  &  $2.0$\\
$1$ & $4.28e-01$ & $4.52e-03$ & $$ & $7.23e-05$ & $$ & $2.59e-02$ & $$ & $1.54e-03$ & $$\\
$$  &  $$  &  $$  &  $1.9$  &  $$  &  $3.0$  &  $$  &  $1.0$  &  $$  &  $2.0$\\
$2$ & $2.22e-01$ & $1.18e-03$ & $$ & $9.00e-06$ & $$ & $1.29e-02$ & $$ & $3.88e-04$ & $$\\
$$  &  $$  &  $$  &  $2.0$  &  $$  &  $3.0$  &  $$  &  $1.0$  &  $$  &  $2.0$\\
$3$ & $1.13e-01$ & $3.00e-04$ & $$ & $1.12e-06$ & $$ & $6.44e-03$ & $$ & $9.72e-05$ & $$\\
$$  &  $$  &  $$  &  $2.0$  &  $$  &  $3.0$  &  $$  &  $1.0$  &  $$  &  $2.0$\\
$4$ & $5.68e-02$ & $7.56e-05$ & $$ & $1.40e-07$ & $$ & $3.22e-03$ & $$ & $2.42e-05$ & $$\\
$$  &  $$  &  $$  &  $2.0$  &  $$  &  $3.0$  &  $$  &  $1.0$  &  $$  &  $2.0$\\
$5$ & $2.85e-02$ & $1.90e-05$ & $$ & $1.75e-08$ & $$ & $1.61e-03$ & $$ & $6.07e-06$ & $$\\
$$  &  $$  &  $$  &  $2.0$  &  $$  &  $3.0$  &  $$  &  $1.0$  &  $$  &  $2.0$\\
$6$ & $1.43e-02$ & $4.75e-06$ & $$ & $2.20e-09$ & $$ & $8.04e-04$ & $$ & $1.52e-06$ & $$\\
$$  &  $$  &  $$  &  $2.0$  &  $$  &  $\times$  &  $$  &  $1.0$  &  $$  &  $\times$\\
$7$ & $7.14e-03$ & $1.19e-06$ & $$ & $\times$ & $$ & $4.02e-04$ & $$ & $\times$ & $$\\
\bottomrule
\end{tabular}
}
\caption{Example 1. $L^{2}$-norm and $H^{1}$-seminorm relative errors and corresponding EOC, for $\kappa=1$ and $k=1,2$.}
\label{tab:convergence_circ_circ_CVEM_BEM_kappa_1}
\end{table}
\begin{table}[H]
 \centering
  \def\sym#1{\ifmmode^{#1}\else\(^{#1}\)\fi}%
    \scalebox{0.94}{
\begin{tabular}{lc||cccc||cccc}
\toprule%
\multicolumn{2}{c}{} & \multicolumn{4}{c}{$L^{2}$-norm} & \multicolumn{4}{c}{$H^{1}$-seminorm}\\
\toprule%
lev.	& $h$ & $\varepsilon^{0,1}_{\text{lev}}$ & $\text{EOC}$ & $\varepsilon^{0,2}_{\text{lev}}$	& $\text{EOC}$ & $\varepsilon^{\nabla,1}_{\text{lev}}$ & $\text{EOC}$ & $\varepsilon^{\nabla,2}_{\text{lev}}$	& $\text{EOC}$\\ 	
\toprule%
$0$ & $8.02e-01$ & $6.03e-01$ & $$ & $2.57e-01$ & $$ & $5.77e-01$ & $$ & $3.07e-01$ & $$\\
$$  &  $$  &  $$  &  $0.8$  &  $$  &  $2.7$  &  $$  &  $0.6$  &  $$  &  $1.8$\\
$1$ & $4.28e-01$ & $3.52e-01$ & $$ & $4.00e-02$ & $$ & $3.92e-01$ & $$ & $8.59e-02$ & $$\\
$$  &  $$  &  $$  &  $1.5$  &  $$  &  $3.2$  &  $$  &  $1.1$  &  $$  &  $2.0$\\
$2$ & $2.22e-01$ & $1.33e-01$ & $$ & $4.37e-03$ & $$ & $1.84e-01$ & $$ & $2.18e-02$ & $$\\
$$  &  $$  &  $$  &  $1.8$  &  $$  &  $3.2$  &  $$  &  $1.2$  &  $$  &  $2.0$\\
$3$ & $1.13e-01$ & $3.76e-02$ & $$ & $4.71e-04$ & $$ & $7.88e-02$ & $$ & $5.49e-03$ & $$\\
$$  &  $$  &  $$  &  $1.9$  &  $$  &  $3.1$  &  $$  &  $1.1$  &  $$  &  $2.0$\\
$4$ & $5.68e-02$ & $9.74e-03$ & $$ & $5.51e-05$ & $$ & $3.65e-02$ & $$ & $1.38e-03$ & $$\\
$$  &  $$  &  $$  &  $2.0$  &  $$  &  $3.0$  &  $$  &  $1.0$  &  $$  &  $2.0$\\
$5$ & $2.85e-02$ & $2.46e-03$ & $$ & $6.75e-06$ & $$ & $1.78e-02$ & $$ & $3.44e-04$ & $$\\
$$  &  $$  &  $$  &  $2.0$  &  $$  &  $3.0$  &  $$  &  $1.0$  &  $$  &  $2.0$\\
$6$ & $1.43e-02$ & $6.16e-04$ & $$ & $8.39e-07$ & $$ & $8.86e-03$ & $$ & $8.61e-05$ & $$\\
$$  &  $$  &  $$  &  $2.0$  &  $$  &  $\times$  &  $$  &  $1.0$  &  $$  &  $\times$\\
$7$ & $7.14e-03$ & $1.54e-04$ & $$ & $\times$ & $$ & $4.42e-03$ & $$ & $\times$ & $$\\
\bottomrule
\end{tabular}
}
\caption{Example 1. $L^{2}$-norm and $H^{1}$-seminorm relative errors and corresponding EOC, for $\kappa=10$ and $k=1,2$.}
\label{tab:convergence_circ_circ_CVEM_BEM_kappa_10}
\end{table}
\noindent
For completeness, to highlight the importance of the use of CVEM for curved domains, in Figure \ref{fig:circ_circ_error_VEM_kappa_1} we report 
the behaviour of the $H^{1}$-seminorm and $L^{2}$-norm errors, obtained by applying the standard VEM defined on polygonal approximations of the computational domain.
As expected, the optimal rate of convergence is confirmed for the $H^1$-seminorm, while the approximation of the geometry affects the $L^2$-norm error only in the case $k=2$. 
\begin{figure}[H]
	\centering
	\includegraphics[width=1.00\textwidth]{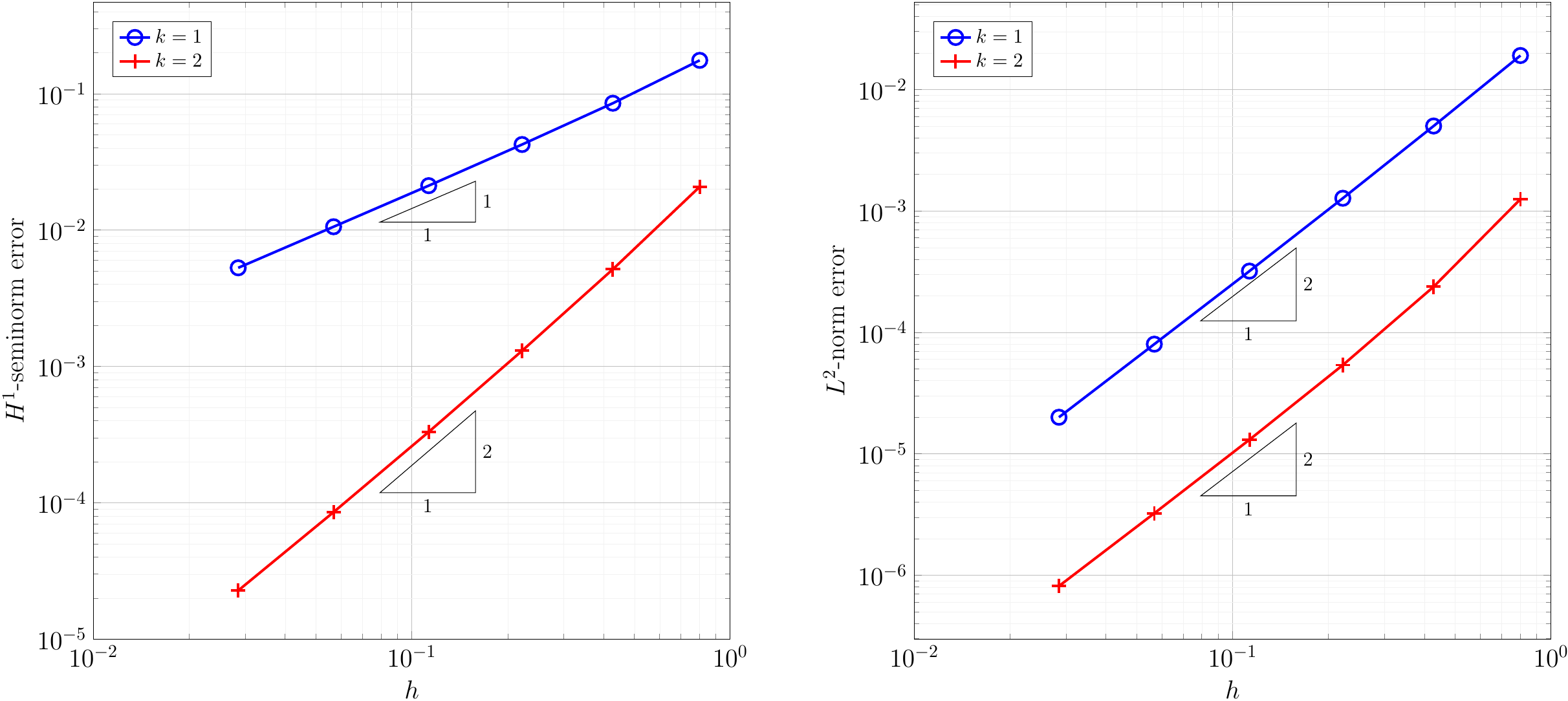}
	\caption{Example 1. $H^{1}$-seminorm (left plot) and $L^{2}$-norm (right plot) error for a sequence of ``straight'' meshes, for wave number $\kappa=1$}
\label{fig:circ_circ_error_VEM_kappa_1}	
\end{figure}

\subsection{Example 2. Computational domain with piece-wise linear boundaries}

\noindent
Let now $\Omega_{0}:=[-1,1]^{2}$ and $\Gamma$ the contour of the square $[-2,2]^{2}$ (see Figure \ref{fig:box_quad_meshes}). Since the domain of interest is a polygon, we can apply the standard (non curvilinear) VEM on polygonal meshes without introducing an approximation of the geometry. 
In Table \ref{tab:dofs_box_quad}, we report the total number of degrees of freedom of the VEM space, associated to each decomposition level of the computational domain, for $k=1,2$. In Figure \ref{fig:box_quad_meshes}, we plot the meshes corresponding to lev. 0 (left plot) and lev. 3 (right plot). We remark that the maximum level of refinement we have considered is lev. 7 for k = 1, whose number of degrees of freedom coincides with that of lev. 6 for k = 2.\\
\begin{figure}[H]
	\centering
	\includegraphics[width=0.70\textwidth]{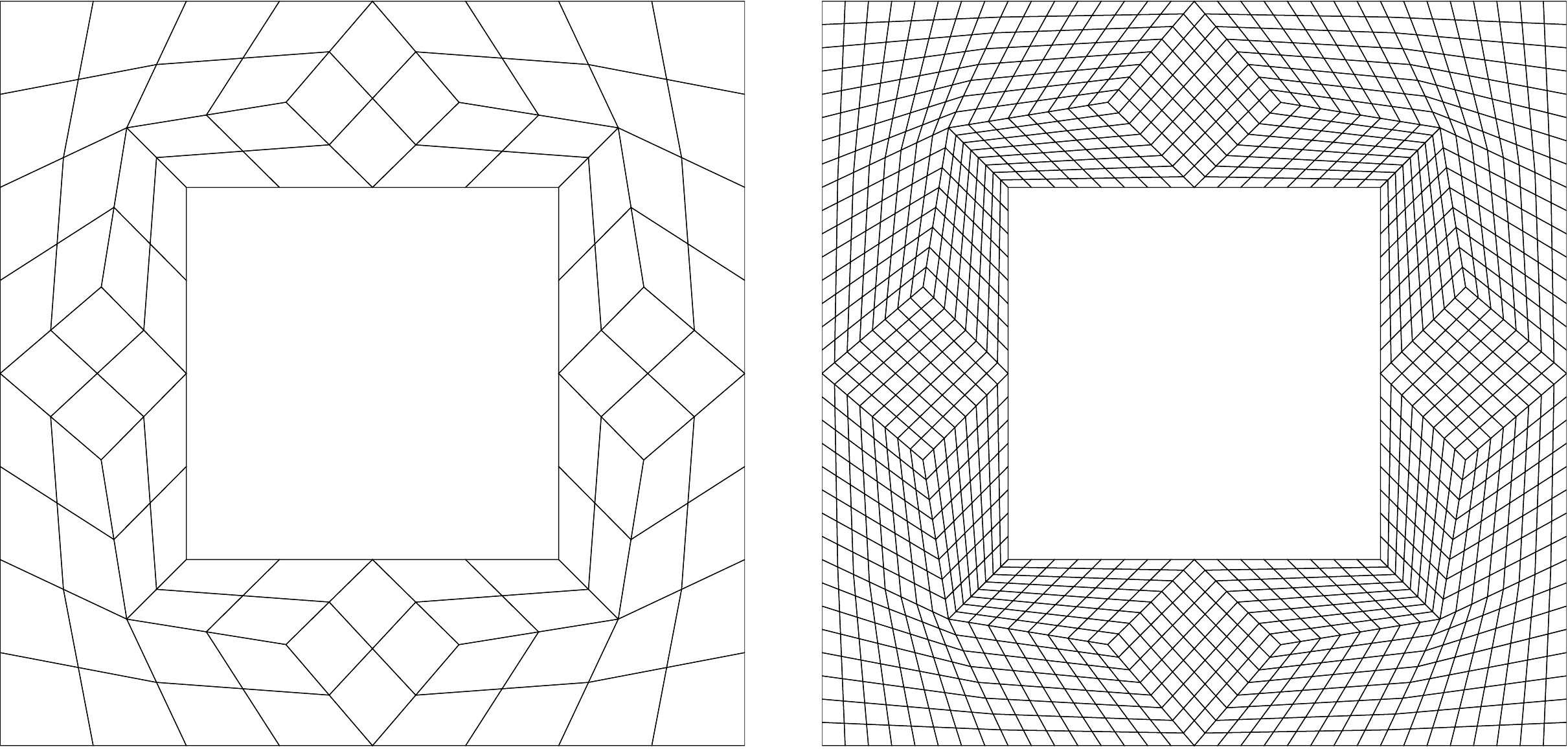}
	\caption{Example 2. Meshes of $\Omega$ for lev. 0 (left plot) and lev. 3 (right plot).}
	\label{fig:box_quad_meshes}
\end{figure}
\begin{table}[H]
 \centering
\begin{tabular}{lrrrrrrrr}
\toprule%
	& lev. 0 & lev. 1 & lev. 2 & lev. 3	& lev. 4 & lev. 5 & lev. 6 & lev. 7\\ 
\toprule%
$k=1$	& $120$ &    $432$ & $1,632$ &   $6,336$ & $24,960$ &   $99,072$  &   $394,752$ & $1,575,940$\\
$k=2$	& $432$ &  $1,632$ & $6,336$ & $24,960$ & $99,072$ & $394,752$ &  $1,575,940$ & $-$\\
\bottomrule
\end{tabular}
\caption{Example 2. Total number of degrees of freedom for $k=1,2$ and for different levels of refinement.}
\label{tab:dofs_box_quad}
\end{table}
\noindent
In Figures \ref{fig:quad_quad_kappa_1} and \ref{fig:quad_quad_kappa_10}, we show the real and imaginary parts of the numerical solution for the wave numbers $\kappa=1$ and $\kappa=10$ respectively, obtained by the quadratic approximation associated to the minimum refinement level for which the graphical behaviour is accurate and not wavy; this latter is lev. 3 for Figure \ref{fig:quad_quad_kappa_1} and lev. 5 for Figure \ref{fig:quad_quad_kappa_10}. 
\begin{figure}[H]
	\centering
	\includegraphics[width=1.00\textwidth]{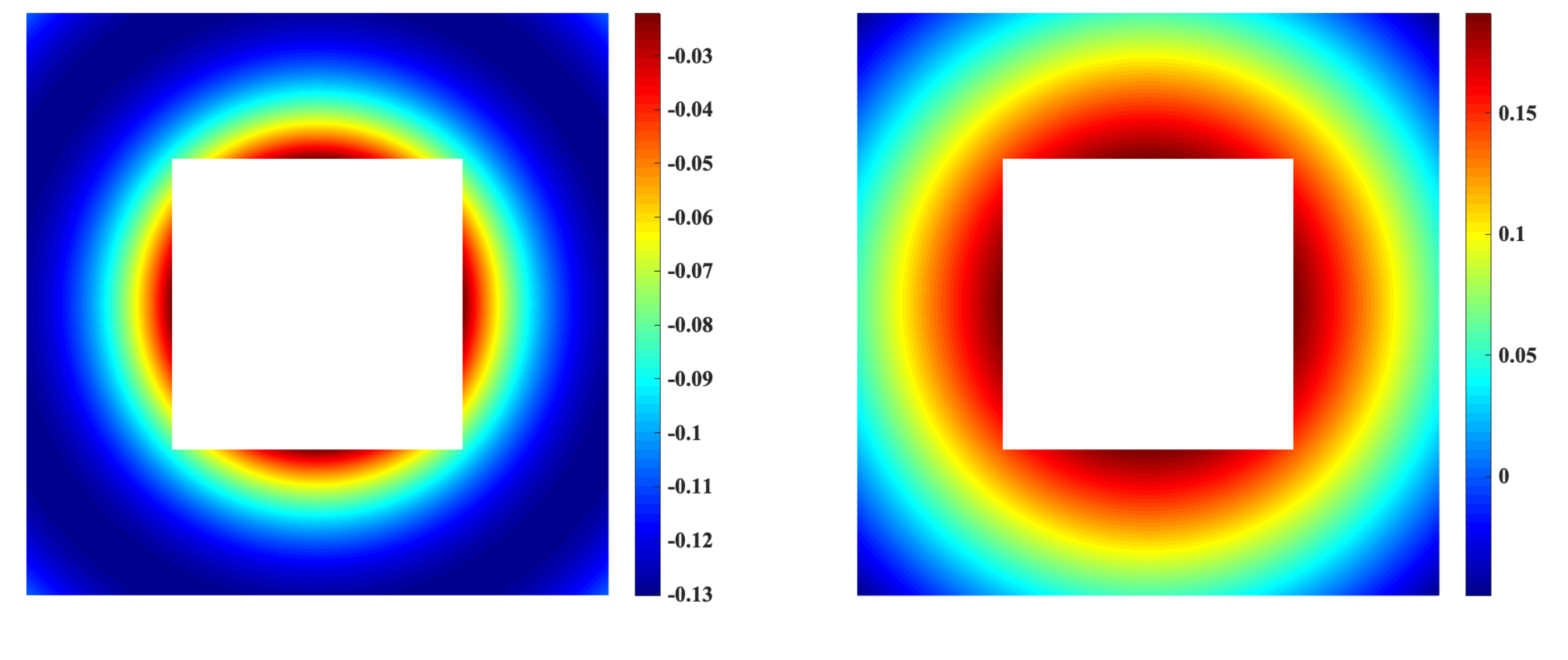}
	\caption{Example 1. Real (left plot) and imaginary (right plot) part of the numerical solution for $\kappa=1$, lev. 3 and $k=2$.}
	\label{fig:quad_quad_kappa_1}
\end{figure}

\begin{figure}[H]
	\centering
	\includegraphics[width=1.00\textwidth]{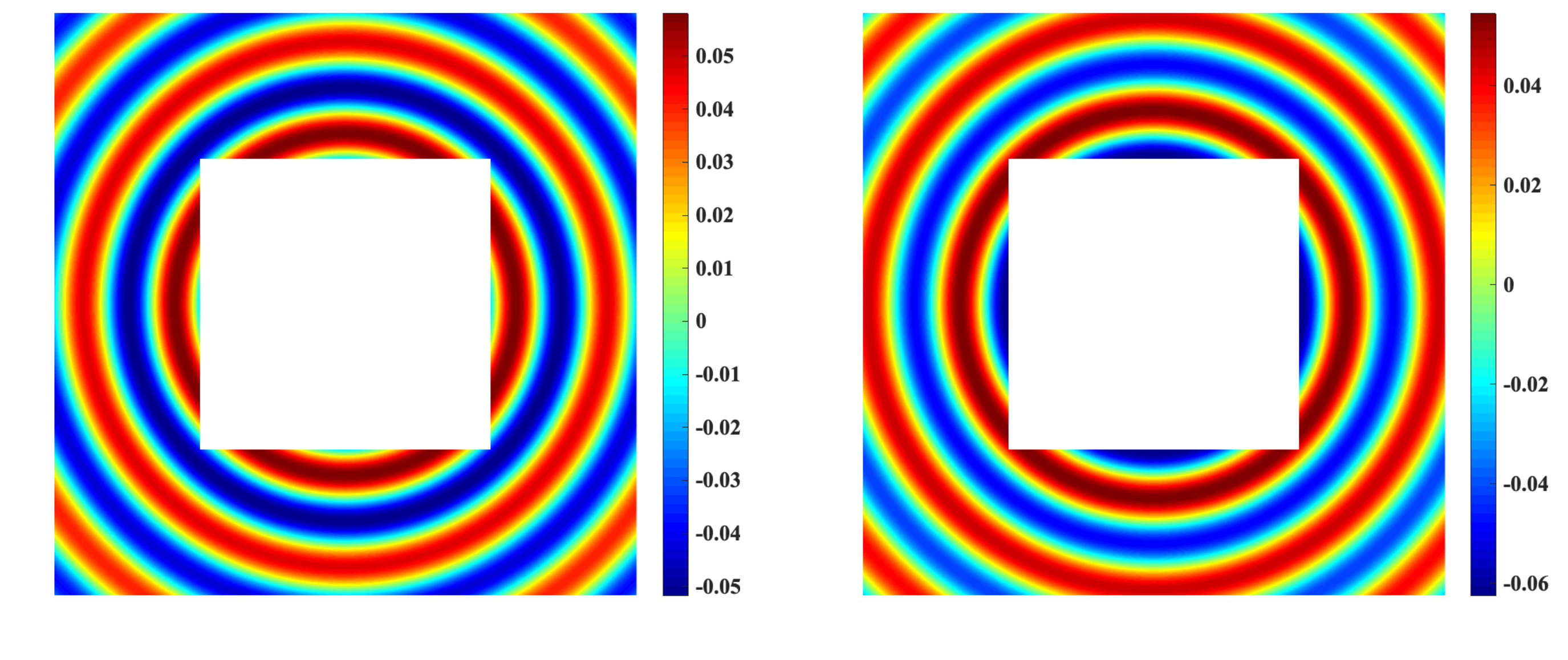}
	\caption{Example 1. Real (left plot) and imaginary (right plot) part of the numerical solution for $\kappa=10$, lev. 5 and $k=2$.}
	\label{fig:quad_quad_kappa_10}
\end{figure}
\noindent
As we can see from Tables \ref{tab:convergence_VEM_BEM_kappa_1} and \ref{tab:convergence_VEM_BEM_kappa_10}, the expected convergence order of the VEM-BEM approach for both $H^1$-seminorm and $L^2$-norm errors are confirmed, even if the assumption on the regularity of the artificial boundary $\Gamma$, required by the theory, is not satisfied. 
\begin{table}[H]
 \centering
  \def\sym#1{\ifmmode^{#1}\else\(^{#1}\)\fi}%
    \scalebox{0.94}{
\begin{tabular}{lc||cccc||cccc}
\toprule%
\multicolumn{2}{c}{} & \multicolumn{4}{c}{$L^{2}$-norm} & \multicolumn{4}{c}{$H^{1}$-seminorm}\\
\toprule%
lev.	& $h$ & $\varepsilon^{0,1}_{\text{lev}}$ & $\text{EOC}$ & $\varepsilon^{0,2}_{\text{lev}}$	& $\text{EOC}$ & $\varepsilon^{\nabla,1}_{\text{lev}}$ & $\text{EOC}$ & $\varepsilon^{\nabla,2}_{\text{lev}}$	& $\text{EOC}$\\ 	
\toprule%
$0$ & $7.60e-01$ & $1.71e-02$ & $$ & $8.34e-04$ & $$ & $1.57e-01$ & $$ & $1.66e-02$ & $$\\
$$  &  $$  &  $$  &  $2.0$  &  $$  &  $3.0$  &  $$  &  $1.1$  &  $$  &  $2.0$\\
$1$ & $3.85e-01$ & $4.37e-03$ & $$ & $1.01e-04$ & $$ & $7.57e-02$ & $$ & $4.07e-03$ & $$\\
$$  &  $$  &  $$  &  $2.0$  &  $$  &  $3.0$  &  $$  &  $1.0$  &  $$  &  $2.0$\\
$2$ & $1.94e-01$ & $1.10e-03$ & $$ & $1.26e-05$ & $$ & $3.78e-02$ & $$ & $1.02e-03$ & $$\\
$$  &  $$  &  $$  &  $2.0$  &  $$  &  $3.0$  &  $$  &  $1.0$  &  $$  &  $2.0$\\
$3$ & $9.73e-02$ & $2.74e-04$ & $$ & $1.57e-06$ & $$ & $1.89e-02$ & $$ & $2.56e-04$ & $$\\
$$  &  $$  &  $$  &  $2.0$  &  $$  &  $3.0$  &  $$  &  $1.0$  &  $$  &  $2.0$\\
$4$ & $4.87e-02$ & $6.86e-05$ & $$ & $1.96e-07$ & $$ & $9.46e-03$ & $$ & $6.40e-05$ & $$\\
$$  &  $$  &  $$  &  $2.0$  &  $$  &  $3.0$  &  $$  &  $1.0$  &  $$  &  $2.0$\\
$5$ & $2.44e-02$ & $1.71e-05$ & $$ & $2.46e-08$ & $$ & $4.73e-03$ & $$ & $1.60e-05$ & $$\\
$$  &  $$  &  $$  &  $2.0$  &  $$  &  $2.9$  &  $$  &  $1.0$  &  $$  &  $2.0$\\
$6$ & $1.22e-02$ & $4.29e-06$ & $$ & $3.35e-09$ & $$ & $2.36e-03$ & $$ & $4.03e-06$ & $$\\
$$  &  $$  &  $$  &  $2.0$  &  $$  &  $\times$  &  $$  &  $1.0$  &  $$  &  $\times$\\
$7$ & $6.10e-03$ & $1.07e-06$ & $$ & $\times$ & $$ & $1.18e-03$ & $$ & $\times$ & $$\\
\bottomrule
\end{tabular}
}
\caption{Example 2. $L^{2}$-norm and $H^{1}$-seminorm relative errors and corresponding EOC, for $\kappa=1$ and $k=1,2$.}
\label{tab:convergence_VEM_BEM_kappa_1}
\end{table}
\begin{table}[H]
 \centering
  \def\sym#1{\ifmmode^{#1}\else\(^{#1}\)\fi}%
    \scalebox{0.94}{
\begin{tabular}{lc||cccc||cccc}
\toprule%
\multicolumn{2}{c}{} & \multicolumn{4}{c}{$L^{2}$-norm} & \multicolumn{4}{c}{$H^{1}$-seminorm}\\
\toprule%
lev.	& $h$ & $\varepsilon^{0,1}_{\text{lev}}$ & $\text{EOC}$ & $\varepsilon^{0,2}_{\text{lev}}$	& $\text{EOC}$ & $\varepsilon^{\nabla,1}_{\text{lev}}$ & $\text{EOC}$ & $\varepsilon^{\nabla,2}_{\text{lev}}$	& $\text{EOC}$\\ 	
\toprule%
$0$ & $7.60e-01$ & $1.02e-00$ & $$ & $4.21e-01$ & $$ & $1.05e-00$ & $$ & $5.54e-01$ & $$\\
$$  &  $$  &  $$  &  $1.0$  &  $$  &  $3.7$  &  $$  &  $0.7$  &  $$  &  $1.7$\\
$1$ & $3.85e-01$ & $5.22e-01$ & $$ & $3.25e-02$ & $$ & $6.43e-01$ & $$ & $1.25e-01$ & $$\\
$$  &  $$  &  $$  &  $1.7$  &  $$  &  $3.1$  &  $$  &  $1.2$  &  $$  &  $2.0$\\
$2$ & $1.94e-01$ & $1.60e-01$ & $$ & $3.78e-03$ & $$ & $2.77e-01$ & $$ & $3.24e-02$ & $$\\
$$  &  $$  &  $$  &  $1.9$  &  $$  &  $3.1$  &  $$  &  $1.2$  &  $$  &  $2.0$\\
$3$ & $9.73e-02$ & $4.22e-02$ & $$ & $4.55e-04$ & $$ & $1.23e-01$ & $$ & $8.16e-03$ & $$\\
$$  &  $$  &  $$  &  $2.0$  &  $$  &  $3.0$  &  $$  &  $1.1$  &  $$  &  $2.0$\\
$4$ & $4.87e-02$ & $1.07e-02$ & $$ & $5.62e-05$ & $$ & $5.92e-02$ & $$ & $2.04e-03$ & $$\\
$$  &  $$  &  $$  &  $2.0$  &  $$  &  $3.0$  &  $$  &  $1.0$  &  $$  &  $2.0$\\
$5$ & $2.44e-02$ & $2.67e-03$ & $$ & $7.01e-06$ & $$ & $2.93e-02$ & $$ & $5.11e-04$ & $$\\
$$  &  $$  &  $$  &  $2.0$  &  $$  &  $3.0$  &  $$  &  $1.0$  &  $$  &  $2.0$\\
$6$ & $1.22e-02$ & $6.68e-04$ & $$ & $8.85e-07$ & $$ & $1.46e-02$ & $$ & $1.28e-04$ & $$\\
$$  &  $$  &  $$  &  $2.0$  &  $$  &  $\times$  &  $$  &  $1.0$  &  $$  &  $\times$\\
$7$ & $6.10e-03$ & $1.67e-04$ & $$ & $\times$ & $$ & $7.29e-03$ & $$ & $\times$ & $$\\
\bottomrule
\end{tabular}
}
\caption{Example 2. $L^{2}$-norm and $H^{1}$-seminorm relative errors and corresponding EOC, for $\kappa=10$ and $k=1,2$.}
\label{tab:convergence_VEM_BEM_kappa_10}
\end{table}

%


\section{Conclusions and perspectives}\label{sec_7_conclusions}
In this paper we have proposed a novel numerical approach for the solution of 2D Helmholtz problems defined in unbounded regions, external to bounded obstacles. It consists in reducing the unbounded domain to a finite computational one and in the coupling of the CVEM with the one equation BEM, by means of the Galerkin approach.
%
We have carried out the theoretical analysis of the method in a quite abstract framework, providing an optimal error estimate in the energy norm, under assumptions that can, in principle, include a variety of discretization spaces wider than those we have considered, for both the BIE and the interior PDE.
While the VEM/CVEM has been extensively and successfully applied to interior problems, its application to exterior problems is still at an early stage and, to the best of the authors' knowledge, the CVEM approach has been applied in this paper for the first time to solve exterior frequency-domain wave propagation problems in the Galerkin context. 
%
%

\

\indent
We remark that the above mentioned coupling has been proposed in a conforming approach context, so that the order of the CVEM and BEM approximation spaces have been chosen with the same polynomial degree of accuracy, and the grid used for the BEM discretization is the one inherited by the interior CVEM decomposition.
  

It is worth noting that it is possible in principle to decouple the CVEM and the BEM discretization, both in terms of degree of accuracy and of non-matching grids. 
This would lead to a non-conforming coupling approach by using, for example, a mortar type technique (see for instance \cite{bertoluzza_2019}). Such an approach would offer the further advantage of coupling different types of approximation spaces and of using fast techniques for the discretization of the BEM (see for example the very recent papers \cite{chaillat_2017, bertoluzza_2020, desiderio_2018, desiderio_2020}). This will be the subject of a future investigation.



\section*{Acknowledgments}
\noindent
This research benefits from the HPC (High Performance Computing) facility of the University of Parma, Italy.
 
 \section*{Funding}
\noindent
This work was performed as part of the GNCS-IDAM 2020 research program \emph{``Metodologie innovative per problemi di propagazione di onde in domini illimitati: aspetti teorici e computazionali''}. The second and the third author were partially supported by 
 MIUR grant \emph{``Dipartimenti di Eccellenza 2018-2022''}, CUP E11G18000350001.

\bibliographystyle{plain}
\bibliography{bibtex_num}{}

\end{document}